\newcommand{\mr}{\mathring}
\newcommand{\wt}{\widetilde}
\newcommand{\Z}{\mathbb{Z}}
\newcommand{\R}{\mathbb{R}}
\newcommand{\C}{\mathcal{C}}
\newcommand{\mc}{\mathcal}
\newcommand{\hbs}{\tau^{(2)}}
\newcommand*\hex{{\raisebox{-.15ex}{\text{\includegraphics[width=.9em]{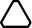}}}}}
\newcommand*\smhex{{\raisebox{-.15ex}{\text{\includegraphics[width=.6em]{char_customhex.pdf}}}}}
\newcommand*\tet{{\raisebox{-.3ex}{\text{\includegraphics[width=.9em]{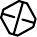}}}}}
\newcommand*\taut{{\raisebox{-.3ex}{\text{\includegraphics[width=.9em]{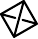}}}}}
\newcommand{\tri}{\triangle}
\newcommand{\del}{\partial}
\DeclareMathOperator{\closure}{cl}
\DeclareMathOperator{\cone}{cone}
\DeclareMathOperator{\intr}{int}
\DeclareMathOperator{\ind}{index}
\DeclareMathOperator{\brloc}{brloc}
\DeclareMathOperator{\sgn}{sign}
\DeclareMathOperator{\core}{core}
\DeclareMathOperator{\coll}{coll}
\DeclareMathOperator{\Neg}{neg}
\DeclareMathOperator{\shad}{shadow}
\DeclareMathOperator{\prongs}{prongs}
\renewcommand{\phi}{\varphi}
\newtheorem{thm}{Theorem}
\numberwithin{equation}{section}
\newtheorem{theorem}[equation]{Theorem}
\newtheorem{lemma}[equation]{Lemma}
\newtheorem{corollary}[equation]{Corollary}
\newtheorem*{maintheorem}{Theorems A and B}
\theoremstyle{definition}
\newtheorem{definition}[equation]{Definition}
\newtheorem*{question*}{Question}
\theoremstyle{remark}
\newtheorem{remark}[equation]{Remark}
\newtheorem{example}[]{Example}
\begin{document}

\title[Veering triangulations and the Thurston norm]{Veering triangulations and the Thurston norm: homology to isotopy}
\author{Michael P. Landry}
\address{1 Brookings Drive\\
Washington University in Saint Louis\\
Saint Louis, MO 63130 }
\email{\href{mailto:mlandry@wustl.edu}{mlandry@wustl.edu}}

\date{}
\maketitle
\begin{abstract}
We show that a veering triangulation $\tau$ specifies a face $\sigma$ of the Thurston norm ball of a closed 3-manifold, and computes the Thurston norm in the cone over $\sigma$. Further, we show that $\tau$ collates exactly the taut surfaces representing classes in the cone over $\sigma$ up to isotopy. The analysis includes nonlayered veering triangulations and nonfibered faces. We also prove an analogous theorem for manifolds with boundary that is integral to a theorem of Landry-Minsky-Taylor relating the Thurston norm to the {veering polynomial}, a new generalization of McMullen's Teichm\"uller polynomial.
\end{abstract}


\noindent

\section{Introduction}

Let $M$ be a closed, oriented, irreducible 3-manifold. A \emph{taut surface} in $M$ is an embedded oriented surface which minimizes topological complexity in its homology class. Perhaps the simplest example is a fiber of a fibration of $M$ over the circle, if one exists. More generally, Thurston shows in \cite{Thu86} that any compact leaf of a taut foliation of $M$ is taut. The converse is also true: in \cite{Gab83}, Gabai shows that any taut surface is a compact leaf of some taut foliation of $M$.
The results herein show that \emph{veering triangulations}
are intimately related to taut surfaces and to the unit ball of the Thurston norm, an object of broad interest arising in areas from geometric group theory to Floer homology.

Before stating our results we give some broad-strokes definitions. A {veering triangulation} $\tau$ is a \emph{taut ideal triangulation} of a torally bounded 3-manifold $\mr M$ in the sense of Lackenby \cite{Lac00} satisfying an extra condition on $\tau \cap\partial \mr M$ (see \Cref{sec_veeringdef}). If $M$ is a Dehn filling of $\mr M$ we define a natural element $e_\tau\in H_1(M)$ called the \emph{Euler class of ${\tau}$}. The subset of $H_2(M)$ on which the Thurston norm $x$ agrees with the pairing $\langle -e_\tau,\cdot\rangle\colon H_2(M)\to \R$ is either $\{0\}$ or the nonnegative cone over some face, which we name ${\sigma_\tau}$, of the Thurston norm unit ball $B_x(M)$. We denote this cone by $\cone(\sigma_\tau)$. The 2-skeleton $\hbs$ of $\tau$ is a cooriented branched surface in $\mr M$ and an object we call a \emph{partial branched surface} when viewed as a subset of $M$. In \Cref{sec_pbs} we define these objects and say what it means for a partial branched surface to \emph{carry} a surface. The set of homology classes of closed curves in $\mr M$ positively transverse to $\hbs$ generates a convex polyhedral cone $\C_\tau\subset H_1(M)$ that we call the \emph{cone of homology directions of ${\tau}$}. This defines a dual cone $\C_\tau^\vee=\{\alpha\in H_2(M)\mid \langle \alpha, \gamma\rangle\ge 0 \text{ for all } \gamma\in \C_\tau\}$, where $\langle\cdot, \cdot\rangle$ denotes algebraic intersection. 

The following two theorems summarize our main results.


\begin{thm}\label{thma}
Let $\tau$ be a veering triangulation of a compact 3-manifold $\mr M$. If $M$ is obtained by Dehn filling each component of $\partial \mr M$ along slopes with $\ge 3$ prongs then $M$ is irreducible and atoroidal. Let $\sigma_\tau$ be the face of the Thurston norm ball $B_x(M)$ determined by the Euler class $e_\tau$. Then $\cone(\sigma_\tau)=\C_\tau^\vee$, and the codimension of $\sigma_\tau$ in $\partial B_x(M)$ is equal to the dimension of the largest linear subspace contained in $\C_\tau$.
\end{thm}
\begin{thm}[Main theorem]\label{thmb}
Let $M$, $\tau$, $\sigma_\tau$ be as in \Cref{thma}. A surface $S\subset M$ is carried by $\hbs$ up to isotopy if and only if $S$ is taut and $[S]\in \cone(\sigma_\tau)$.
\end{thm}
In the theorem statements we allow for the possibility that $\sigma_\tau$ is the empty face, which we assign dimension $-1$. The term ``prongs" describes a function for each component of $\partial \mr M$ from $\{\text{boundary slopes}\}$ to $\Z_{\ge0}$, and is defined in \Cref{sec_flatprelim}. We restate these theorems at the end of \Cref{sec_negregion} and show how they follow from intermediate results in the paper.


\Cref{thma} demonstrates that if $\C_\tau\subsetneq H_1(M)$ then $\tau$ is linked to the combinatorics of $B_x(M)$ and to the Thurston norm itself. 
\Cref{thmb} says that $\hbs$ is a scaffolding off of which can be hung not only {some} taut representative of any homology class lying in $\cone(\sigma)$, but {every} taut representative of such a class up to isotopy. This is notable because while a fiber of a fibration $M\to S^1$ is the unique taut representative of its homology class up to isotopy (\cite[\S 3]{Thu86}), an integral class in $H_2(M)$ is not necessarily represented by a unique taut surface up to isotopy and the collection of taut surfaces representing a single homology class is not well understood. 
In fact it is a general phenomenon that one might wish to prove some statement about all taut representatives of a homology class and succeed only in proving the statement holds for a single taut representative of the homology class. We describe two examples of this in order to motivate the promotion of homology to isotopy.

\begin{example} 
The Fully Marked Surface Theorem \cite[Theorem 1.1]{GY20} of David Gabai and Mehdi Yazdi says that if $\mc F$ is a taut foliation of $M$ and $S$ is {fully marked} (meaning that its Euler characteristic is given by pairing with the Euler class of $\mc F$), then the tangent bundle of $\mc F$ is homotopic to the tangent bundle of a new taut foliation $\mc F'$ such that $S$ is homologous to a compact leaf $S'$ of $\mc F'$. Gabai and Yazdi conjecture \cite[Conjecture 1.5]{GY20} that it is not possible to replace ``homologous" with ``isotopic."
\end{example}

\begin{example}\label{ex_tst} 
Lee Mosher proves the Transverse Surface Theorem \cite[Theorem 1.3.1]{Mos92b} over the course of \cite{Mos89,Mos90,Mos91,Mos92b}, which says that if $\phi$ is a pseudo-Anosov flow on $M$, then any integral homology class in $H_2(M)$ which pairs nonnegatively with the homology class of each closed orbit of $\phi$ is represented by a taut surface which is \emph{almost transverse} (see \cite[\S 1]{Mos92b}) to $\phi$. It is not known if any taut surface representing such a homology class is almost transverse to $\phi$ up to isotopy, although Mosher was able to achieve a partial result in this direction \cite[\S 3]{Mos92b}.
\end{example}

\subsection{Related work on flows}\label{sec_relwork}

Unwritten work of Ian Agol and Fran\c{c}ois Gu\'eritaud shows that a pseudo-Anosov flow with {no perfect fits} (see e.g. \cite[Definition 2.2]{Fen12}) gives rise to a veering triangulation together with a Dehn filling, where the cores of the filling tori correspond to singular orbits and the combinatorial notion of {prongs} from \Cref{thma} corresponds to the number of prongs of a singular orbit. Work in progress of Saul Schleimer and Henry Segerman (\cite{SchSeg19} and more to come) aims to show the reverse, i.e. that the pseudo-Anosov flow can be reconstructed from the triangulation and the filling. Mosher has shown \cite[Flows Represent Faces]{Mos92b} that under certain conditions (in particular the case of no perfect fits), a pseudo-Anosov flow on $M$ represents a face of $B_x(M)$ in the sense that its cone of homology directions \cite[\S 1.2]{Mos92b} is dual to the cone on which its negative Euler class agrees with the Thurston norm. His Transverse Surface Theorem (see \Cref{ex_tst} above) then implies every integral class in this cone is represented by a surface almost transverse to the flow.

Here is how our results fit into this picture:

\begin{itemize}
\item If one takes for granted the results in progress above, the statement of \Cref{thma} is unsurprising in light of Mosher's results. Its interest lies in the fact that the statement and its proof are entirely independent of pseudo-Anosov flows and the work of Mosher, Agol-Gu\'eritaud, and Schleimer-Segerman. Moreover the proof introduces the ideas of flattening and flat isotopy (see \Cref{sec_flatsurf}). These ideas are later applied to prove \Cref{thm_export}, which is key to a main result in \cite{LMT20} relating the Thurston norm to the veering polynomial, an object generalizing McMullen's Teichm\"uller polynomial \cite{McM00}.

\item Being carried by $\hbs$ is a combinatorial version of almost transversality. However, \Cref{thmb} is a much stronger result than could be obtained from simply viewing the above work through a combinatorial lens, even if one knew independently that being almost transverse to the flow implied being carried by $\hbs$ (this last fact is implied by our results). The key point is the distinction between homology and isotopy.
\end{itemize}

 \subsection{Veering triangulations}
Veering triangulations, which can be thought of as combinatorial versions of both foliations and flows, have enjoyed much recent interest. As these triangulations play a prominent role in the paper and are still perhaps esoteric to many readers, we survey some of the literature to date. Veering triangulations are introduced by Agol in \cite{Ago10} with the goal of analyzing the mapping tori of pseudo-Anosov homeomorphisms with small dilatation. The triangulations he studies and constructs are all \emph{layered}, meaning they are built from stacking tetrahedra on a surface and taking a quotient by some homeomorphism of the surface. In the literature on veering triangulations, few papers (\cite{HRST11,FutGue13,SchSeg19,LMT20} and this one) include analyses of the nonlayered case. Layered veering triangulations have been used to study Cannon-Thurston maps \cite{Gue15}, the curve and arc complexes of surfaces \cite{MinTay17, Str18}, and pseudo-Anosov flows on fibered hyperbolic 3-manifolds \cite{Lan18,Lan19}. There is another branch of study concerning the question of when a veering triangulation admits a geometric structure, or more generally a strict angle structure \cite{HRST11, FutGue13, HIS16, FTW20}. More recently there is the work of Agol-Gu\'eritaud and Schleimer-Segerman described in \Cref{sec_relwork}.

 \subsection{Applicability, methods, and outline}
 
The work of Agol-Gu\'eritaud described in \Cref{sec_relwork} shows that our results apply in any closed 3-manifold supporting a pseudo-Anosov flow with no perfect fits. By results of Calegari, any atoroidal 3-manifold admitting an $\R$-covered taut foliation \cite[Corollary 5.3.16]{Cal00} or more generally a taut foliation with 1-sided branching \cite[Corollary 4.2.9]{Cal01} admits a pseudo-Anosov flow transverse to the foliation. By results of Fenley \cite[Theorem G and Theorem H]{Fen12} these flows have no perfect fits. Hence our theorem can be applied in any atoroidal manifold admitting an $\R$-covered foliation or more generally a taut foliation with one-sided branching. The result also applies to fibered hyperbolic 3-manifolds using Agol's original construction; another way to see this is by noting that the suspension flow of a pseudo-Anosov map is a pseudo-Anosov flow with no perfect fits.
 
It is natural to ask to what extent Theorems \ref{thma} and \ref{thmb} can be used to compute $x$ and classify faces of $B_x(M)$. More specifically we can ask whether, given a taut surface in $M$, one can find a veering triangulation of a link exterior $\mr M$ in $M$ such that the filling $\mr M\to M$ satisfies the $\ge 3$ prongs condition. This remains unresolved for the time being but is the subject of work in progress by Chi Cheuk Tsang and the author.

We now describe the proofs of Theorems \ref{thma} and \ref{thmb}. The containment $\C_\tau^\vee\subset \cone(\sigma_\tau)$ in \Cref{thma} follows from \Cref{prop_carriedcriterion}, which is a combinatorial version of Mosher's Transverse Surface Theorem. The proof of \Cref{prop_carriedcriterion} relies on a result from \cite{LMT20} which we modify for our setting using techniques from our earlier paper \cite{Lan18}. The proof of the reverse containment is considerably more involved. In the course of the proof we take a taut surface $S$ representing a homology class in $\cone(\sigma_\tau)$ and systematically simplify the position of $S$ with respect to $\hbs$ via isotopies until it is carried by $\hbs$, proving the reverse containment and \Cref{thmb} simultaneously.
The process of simplification also requires keeping track of the position of $S$ with respect to two branched surfaces $B^s$ and $B^u$ associated to $\tau$, respectively called the \emph{stable} and \emph{unstable branched surfaces}. 
Our bookkeeping is made possible by a process we introduce called \emph{flattening} which takes $S$ and places it in a regular neighborhood of $\hbs$ by an isotopy, essentially allowing us to think of $S$ as carried by $\hbs$ with positive and negative weights. This endows $S$ with a combinatorial notion of area which we can minimize. Then we show that area-minimality implies $S$ is carried by $\hbs$.

We conclude the introduction by outlining the paper.
In \Cref{sec_bkgd} we give some background on Euler characteristic and train tracks, on branched surfaces and partial branched surfaces, and on the Thurston norm and dual Thurston norm. In \Cref{sec_veering} we define veering triangulations and develop some necessary combinatorics.
In \Cref{sec_flatsurf} we describe the flattening process and introduce a set of moves which allow us to pass between flattened surfaces by isotopy.
In \Cref{sec_easycontainment} we prove the easier containment $\C_\tau^\vee\subset \cone(\sigma_\tau)$.
In \Cref{sec_bigprops} we begin proving the reverse containment starting with a taut surface $S$ with $[S]\in \cone(\sigma_\tau)$ by first showing we can isotope  $S$ to have a nice position with respect to $B^s$ and $B^u$, in which we say $S$ has the \emph{bigon property}. Then we show that we can in fact isotope  $S$ to lie in a nicer position in which we say $S$ has the \emph{efficient bigon property}.
In \Cref{sec_negregion} we analyze surfaces with the efficient bigon property, show that we can promote this property to an even nicer one called the \emph{excellent bigon property} and show that up to isotopy all surfaces with the excellent bigon property are carried by $\tau^{(2)}$.
Finally, in \Cref{sec_export} we show how to use our techniques to prove a version of our results in the case when Dehn filling is not performed on any component of $\mr M$, which is used in \cite{LMT20}.

\subsection*{Acknowledgements.} We thank Samuel Taylor for feedback on earlier versions of this paper, and for many fruitful and interesting conversations. We also thank the anonymous referee, whose comments improved the clarity of the paper.

\section{Background}\label{sec_bkgd}

\subsection{Euler characteristic, train tracks, index}\label{sec:index_bkgd}

Let $S$ be a compact surface, possibly with boundary, and let $t$ be a train track on $S$. By \textbf{train track}, we mean a 1-complex properly embedded in $S$ with a tangent space at each point varying continuously. We require that $t$ intersect the boundary of $S$ transversely. Following \cite{PenHar92} we call the points of $t\cap \partial S$ \textbf{stops}. The 1-cells of $t$ are called \textbf{branches} and the nodes of valence $>1$ are called \textbf{switches}. We make no assumptions about the types of complementary regions of $t$ or the valence of nodes in this notion of train track; however, we say a train track is \textbf{generic} if its switches are all trivalent. If a train track $t$ is generic, there is a vector field defined on the set of switches of $t$ defined up to scaling by the property that at each switch $p$ the vector field points away from the two-branched side and toward the one-branched side of $p$. To describe the direction of this vector field at a certain switch, we will say that the \emph{switch} points in that direction. If $b$ is a branch of $t$ such that neither endpoint of $b$ is a stop and the switches at both endpoints point into $b$, we say $b$ is a \textbf{large branch} of $t$.

A \textbf{patch of $(S, t)$}, or simply a \textbf{patch of $t$} if the surface in question is clear from context, is the closure of a component of $S-t$ with respect to a path metric. A patch of $t$ is topologically a surface with boundary, but a patch has additional data: the boundary of a patch may have cusps, which we call the \textbf{switches} of the patch, and there may also be \textbf{corners}. The switches and corners of a patch are the places where the patch's boundary is not $C^1$ and they correspond respectively to points where either $t$ has a switch or meets the boundary of $S$. (We use the word \emph{switch} instead of \emph{cusp} because we wish to reserve \emph{cusp} for a different usage later in the paper).

\begin{figure}
\centering
\includegraphics[height=2in]{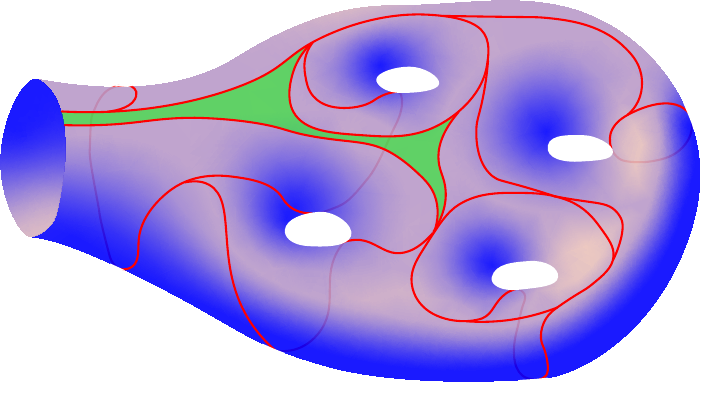}
\caption{A train track on a surface. The green patch is a topological disk with three switches and two corners, so its index is $2(1)-3-\frac{2}{2}=-2$.}
\label{patches}
\end{figure}

Define the \textbf{index} of a patch $p$ to be twice its Euler characteristic, minus its number of switches, minus half its number of corners:
\[
\ind(p)=2\chi(p)-\#\{\text{switches of $p$}\}-\frac{\#\{\text{corners of $p$}\}}{2}.
\]
For example, the index of a disk with 2 boundary cusps is $2(1)-2=0$. The Euler characteristic of $S$ can be obtained from the indices of its patches:
\begin{equation}\label{indexeq}
2\chi(S)=\sum \ind(p),
\end{equation}
where the sum is taken over the patches of $t$.

A patch which is topologically a disk and has $n$ switches and no corners will be called an \textbf{$\boldsymbol n$-gon patch} or simply an \textbf{$\boldsymbol n$-gon}. In particular we will refer to nullgons, monogons, and bigons in our arguments.

A patch which is topologically an annulus with no switches or corners in its boundary is called an \textbf{annulus}. To reduce confusion, if we wish to talk about a patch which is topologically an annulus but may have switches in its boundary we will refer to it as a \textbf{topological annulus}.

\subsection{Branched surfaces}

A \textbf{branched surface} is a 2-complex with a continuously varying tangent plane at every point, locally modeled on the quotient of a stack of disks by identifying closed half-disks (see \cite{Oer86} for more detail). Branched surfaces should be thought of as akin to train tracks: just as a train track embedded in a surface can carry curves and laminations with 1-dimensional leaves, a branched surface embedded in a 3-manifold can carry surfaces and laminations with 2-dimensional leaves. 

We now describe what it means for a branched surface to carry a surface. A regular neighborhood $N(B)$ of a branched surface $B$ has a foliation $\mc F$ whose leaves are  line segments intersecting $B$ transversely. If $\mc F$ is oriented, we say $B$ is a \textbf{cooriented branched surface}. If $S$ is an oriented surface embedded in the oriented 3-manifold $M$, there is a natural coorientation on $S$ determined by requiring that a coorientation vector at a point $p$ in $S$, when appended to a positive basis for the tangent space $T_pS$, gives a positive basis for $T_pM$. Suppose $B$ is a cooriented branched surface. If $S$ is embedded in $N(B)$ transverse to $\mc F$ so that the orientation of $\mc F$ agrees with the coorientation of $S$, we say $S$ is \textbf{carried} by $B$.

The non-manifold points of a branched surface $B$ form its \textbf{branch locus}, denoted $\brloc(B)$. The connected components of $B-\brloc(B)$ are called \textbf{sectors} of $B$.
A \textbf{branch curve} of a branched surface $B$ is the image of an immersion $S^1\looparrowright \brloc(B)$. A \textbf{branch segment} is the image of an immersion $[0,1]\looparrowright \brloc(B)$.

A \textbf{branched surface with generic branch locus} is a branched surface $B$ locally modeled on one of the two spaces shown in \Cref{fig_branchtypes}, which should be distinguished since our branched surfaces will always be embedded in orientable 3-manifolds. The non-manifold points of $\brloc(B)$ for a branched surface with generic branch locus are called \textbf{triple points}.

\begin{figure}
\begin{center}
\includegraphics[]{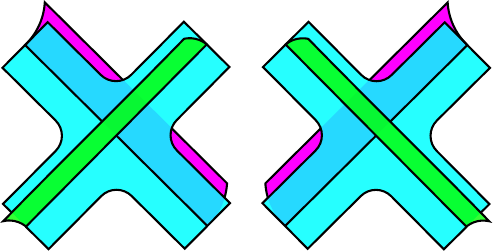}
\caption{The two local models for a branched surface with generic branch locus.}
\label{fig_branchtypes}
\end{center}
\end{figure}

Note that if $B$ is a branched surface in a 3-manifold $M$, and $S\subset M$ is a properly embedded surface in general position relative to $B$, then $B\cap S$ is a train track in $S$ and it makes sense to speak of the patches of $B\cap S$. If $\brloc(B)$ is generic, then $B\cap S$ is a generic train track.

We conclude this section by proving a basic lemma about incompressible surfaces in relation to branched surfaces in irreducible 3-manifolds. If $S$ is a surface embedded in $M$ in general position relative to a branched surface $B$, we say $S$ has \textbf{simple patches} if each patch of $S\cap B$ $\pi_1$-injects into its component of $M-B$. 

\begin{lemma}[simple patches]\label{lem_simplepatches}
Let $B$ be a branched surface in an irreducible 3-manifold $M$. Let $S$ be an incompressible surface embedded in $M$. Then $S$ can be isotoped so that $S\cap B$ has simple patches.
\end{lemma}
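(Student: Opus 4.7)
My plan is to place $S$ in general position with $B$ so that the branch-count of the train track $t = S \cap B$ is minimal over the isotopy class of $S$, and then to show that in any such minimal position every patch $\pi_1$-injects into its component. Suppose for contradiction that in this minimal position some patch $P \subset S$ in a component $C$ of $M - B$ fails to $\pi_1$-inject into $C$. Applying the loop theorem to $P$ sitting in $\partial \bar C$ (after smoothing the non-manifold locus of $B \cup S$), I obtain an embedded compressing disk $D \subset \bar C$ with $\partial D \subset \intr(P)$ essential in $P$ and $\intr(D) \subset C$.

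The first step is to simplify $D \cap S$ to $\partial D$ by innermost-disk reductions. The intersection $D \cap S$ consists of $\partial D$ together with disjoint circles in $\intr(D) \cap S$, each lying in the interior of some patch $P_c$ with $\intr(P_c) \subset C$. For an innermost such circle $c$, bounding a sub-disk $D_c \subset D$ with $\intr(D_c) \cap S = \emptyset$, incompressibility of $S$ produces a disk $D_c' \subset S$ with $\partial D_c' = c$, and irreducibility of $M$ gives a ball $\Delta_c$ bounded by $D_c \cup D_c'$. If $c$ is inessential in $P_c$ then $D_c' \subset P_c$, and I perform a disk swap on $D$, replacing $D_c$ with a push-off of $D_c'$ into $C$, which decreases $|D \cap S|$ without modifying $S$. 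If $c$ is essential in $P_c$ then $D_c' \not\subset P_c$, so $D_c'$ contains branches of $t$; isotoping $S$ across $\Delta_c$ to replace $D_c'$ by a push-off of $D_c$ into $C$ then strictly decreases $|t|$, contradicting minimality. Hence in minimal position all innermost circles are inessential, and iterating disk swaps produces, with $S$ unchanged, a compressing disk $D$ for $P$ with $D \cap S = \partial D$.

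Once $D \cap S = \partial D$, incompressibility of $S$ gives a disk $D' \subset S$ with $\partial D' = \partial D$. Because $\partial D$ is essential in $P$ but inessential in $S$, we must have $D' \not\subset P$, so $D'$ crosses $\partial P$ and contains at least one branch of $t$. Irreducibility of $M$ then produces a ball $\Delta$ with $\partial \Delta = D \cup D'$, and isotoping $S$ across $\Delta$ to replace $D'$ by a push-off $D'' \subset C$ of $D$ yields an isotopic surface whose train track has strictly fewer branches than $t$, again contradicting minimality. This contradiction forces every patch to be simple in the minimal position, proving the lemma.

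The main obstacle is the case analysis in the innermost-disk reduction: in the inessential case the reduction must act purely on $D$, so as not to invoke minimality prematurely, while in the essential case one must confirm that the isotopy of $S$ genuinely reduces the branch-count of $t$ rather than merely reshuffling it. A secondary technicality is the application of the loop theorem to $\bar C$, whose boundary is not a manifold along $\brloc(B)$; this is handled by thickening $\bar C$ slightly so that the relevant patch $P$ lies in the interior of an honest compact $3$-manifold with boundary.
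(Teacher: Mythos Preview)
Your argument is correct and follows essentially the same route as the paper's proof: loop theorem produces a compressing disk for a non-$\pi_1$-injective patch, incompressibility of $S$ plus irreducibility of $M$ then give a ball across which $S$ can be isotoped to reduce complexity. The only cosmetic differences are that the paper counts patches rather than branches and phrases the reduction as an iteration rather than a minimality-plus-contradiction argument, and that you spell out the innermost-disk cleanup of $D\cap S$ which the paper compresses into the phrase ``by an innermost disk argument.''
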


\begin{proof}
Suppose $P$ is a patch of $S\cap B$ in a component $T$ of $M-B$ and suppose that $P\to T$ is not $\pi_1$-injective. By the loop theorem there is some curve $\gamma\subset P$ which is essential in $P$ and bounds an embedded disk in $T$. If $\gamma$ does not bound a disk in $S$ then $D$ is a compression disk for $S$, a contradiction. Otherwise $\gamma$ bounds a disk $D'$ in $S$. Since $M$ is irreducible, $D\cup D'$ bounds a ball and by an innermost disk argument there is an isotopy of $S$ moving $D'$ into $T$. The effect on $S\cap B^s$ is to delete $D'\cap B^s$, strictly reducing the number of patches of $S\cap B^s$. After finitely many of these operations our surface will be of the desired form. 
\end{proof}

\subsection{Partial branched surfaces}\label{sec_pbs}

We will also need to consider objects called \emph{partial} branched surfaces, which are slightly more general than branched surfaces and to our knowledge have not been studied before. These are branched surfaces properly embedded in a submanifold of a 3-manifold.

\begin{definition}
Let $M$ be a compact 3-manifold. 
\begin{enumerate}[label=\alph*.]
\item A \textbf{partial branched surface} in $M$ is a 2-complex $B$ in $M$ such that there exists a union of solid tori $U\subset M$ such that $B$ is a properly embedded branched surface in $M-U$. 
\item An embedded surface $S\subset M$ is \textbf{carried by} $B$ if $S$ can be isotoped so that $S-U$ is carried by $B-U$ in $M-U$ and $S\cap U$ is a union of disks and annuli which $\pi_1$-inject into their respective components of $U$.
\end{enumerate}
\end{definition}

\subsection{The Thurston norm, its dual, and taut surfaces}\label{sec_tautsurfaces}

For a reference elaborating on the following see \cite{Thu86}. Let $M$ be a closed, oriented, irreducible, atoroidal 3-manifold. If $\alpha$ is an integral point in $H_2(M)$, define
\[
x(\alpha)=\min\{-\chi(S-\text{sphere components})\mid \text{$S$ embedded, oriented, and }[S]=\alpha\}.
\]
Thurston shows that $x$ extends to a norm on the vector space $H_2(M)$, now known as the \textbf{Thurston norm}, and that its unit ball $B_x(M)$ is a finite-sided rational polyhedron.

The \textbf{dual Thurston norm} is the norm $x^*$ on $H_1(M)$ defined by 
\[
x^*(\gamma)=\sup\{\langle \alpha, \gamma \rangle\mid x(\alpha) \le 1\},
\]
where $\langle \cdot, \cdot \rangle$ is algebraic intersection. We let $B_{x^*}(M)=\{\gamma\in H_1(M)\mid x^*(\gamma)=1\}$. Thurston shows that $B_{x^*}(M)$ is a polyhedron with integral vertices.

We say an embedded surface $S\subset M$ is \textbf{taut} if $-\chi(S)=x([S])$ and no component of $S$ is nulhomologous. Any taut surface is incompressible, since a compression surgery increases Euler characteristic.

More generally, Thurston shows that if $M$ admits surfaces of nonnegative Euler characteristic representing nonzero homology classes then $x$ gives a pseudonorm on $H_2(M)$. However, we will not need to consider this case.

\section{Veering triangulations: background and preliminaries}\label{sec_veering}

\subsection{Definition of \emph{veering triangulation}}\label{sec_veeringdef}
\begin{figure}
\centering
\includegraphics{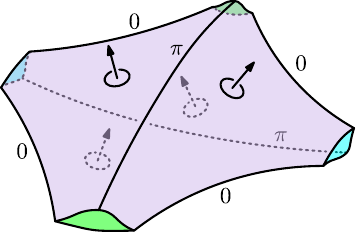}
\caption{A taut tetrahedron with edge angles and face coorientations indicated. The tips colored green are upward flat triangles, and the blue tips are downward flat triangles.}
\label{fig_tauttetrahedron}
\end{figure}

A \textbf{taut tetrahedron} is an ideal tetrahedron, i.e. a tetrahedron with its vertices removed, with the following extra structure: two faces are cooriented outward, two are cooriented inward, and the edges are labeled 0 and $\pi$ according to whether face coorientations disagree or agree along that edge, respectively. By pinching along the edges labeled 0 and smoothing along the edges labeled $\pi$, we always think of the $0,\pi$ labels as denoting the interior angle of the taut tetrahedron at that edge. This gives a taut tetrahedron a smooth structure in the sense that there is a well-defined tangent plane at every point in the boundary. See \Cref{fig_tauttetrahedron}.

A face of a taut tetrahedron $\taut$ is called a \textbf{top (bottom) face} if it is cooriented outward (inward). The union of the two top (bottom) faces of $\taut$ is called the \textbf{top (bottom)} of $\taut$. The edge along which the two top (bottom) faces of $\taut$ meet is called the \textbf{top (bottom) edge} of $\taut$.

A \textbf{taut ideal triangulation} is an ideal triangulation $\tau$ of a 3-manifold $N$ by taut ideal tetrahedra such that for each edge $e$, the sum of interior angles around $e$ is $2\pi$ (equivalently, $e$ is the image of exactly two edges labeled $\pi$ under the quotient map $\{\text{tetrahedra}\}\to N$) and such that bottom faces are only identified with top faces and vice versa. This forces $N$ to be orientable, and a choice of orientation for $N$ gives an orientation on the 2-skeleton $\tau^{(2)}$ by requiring that the direct sum of the orientation on each tangent plane with the coorientation be positively oriented. This gives a notion of clockwise and counterclockwise on each 2-cell of $\tau$.

Let $\tau$ be a taut ideal triangulation. By removing small neighborhoods of the ideal vertices, we will always think of $\tau$ as a decomposition of a compact manifold $\mr M$ into \emph{truncated} taut tetrahedra. A truncated taut tetrahedron $\tet$ has 8 sides, 4 of which are hexagons and 4 of which are triangles which we call the \textbf{tips} of $\tet$. Each tip has the smooth structure of a bigon. Instead of repeating ``truncated taut tetrahedron," we will use the term \textbf{$\boldsymbol\tau$-tetrahedron}. Our convention will be that the terms \textbf{$\boldsymbol\tau$-face} and \textbf{$\boldsymbol\tau$-edge} refer to ``honest" faces and edges of $\tau$, i.e. faces and edges intersecting $\intr(\mr M)$. If $\hex$ is a $\tau$-face, each edge of $\hex$ which is not a $\tau$-edge is called a \textbf{tip} of $\hex$. Each $\tau$-face has 3 tips.

\begin{figure}
\centering
\includegraphics[height=1.5in]{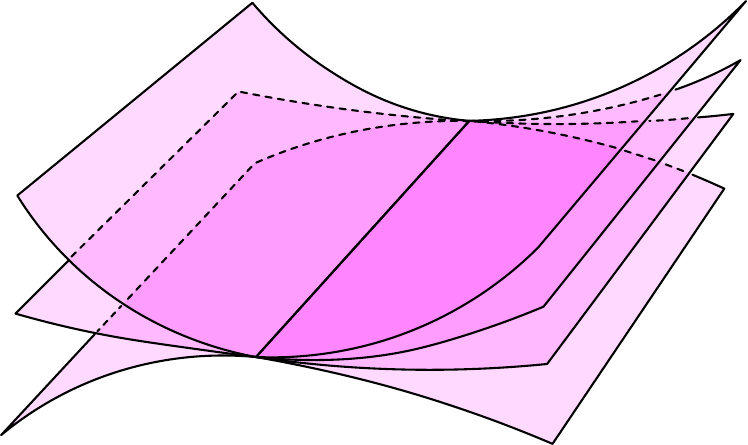}
\caption{The 2-skeleton of a taut ideal triangulation is a cooriented branched surface. Here we see a neighborhood of an edge of valence 7.}
\label{fig_itsabs}
\end{figure}

The 2-skeleton $\tau^{(2)}$ of $\tau$ can naturally be viewed as a cooriented branched surface. See \Cref{fig_itsabs}. The branch locus of $\tau^{(2)}$ is equal to the collection of $\tau$-edges.  Note that $\tau$ induces a cooriented train track 
\[
\partial\hbs:=\tau^{(2)}\cap \partial \mr M
\] 
on $\partial \mr M$. The patches of $\partial\hbs$ are all tips of $\tau$-tetrahedra, so for Euler characteristic reasons $\partial \mr M$ must be a union of tori. Let $U:=M-\intr(\mr M)$, so $U$ is a union of closed solid tori. 

A \textbf{flat triangle} $t$ is a bigon of a cooriented train track $T$ with three branches of $T$ in its boundary. An example is a tip of a $\tau$-tetrahedron with respect to $\partial\hbs$. The flat triangle $t$ has three vertices, two of which are switches of $t$; the internal angle is 0 at these vertices, which we call \textbf{cuspidal}. We will call the third vertex \textbf{noncuspidal}; the internal angle at this vertex is $\pi$. An edge of a flat triangle is a \textbf{0-0 edge} if it connects the two cuspidal vertices and otherwise is a \textbf{0-$\boldsymbol\pi$ edge}.
There are two types of flat triangles, distinguished by the coorientation at the noncuspidal vertex: a flat triangle $t$ is called \textbf{upward (downward)} if the coorientation at the noncuspidal vertex points out of (into) $t$. See \Cref{fig_updowntriangles}.

\begin{figure}
\centering
\includegraphics[width=3in]{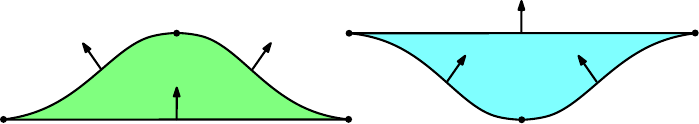}
\caption{An upward flat triangle (left) and a downward flat triangle (right).}
\label{fig_updowntriangles}
\end{figure}

An \textbf{upward (downward) ladder} is an annulus $A$ together with a cooriented train track $t$ containing $\partial A$ such that all patches of $t$ are upward (downward) flat triangles and for each branch $b$ of $t$, either $b\subset \partial A$ or $(b-\{\text{endpoints of $b$}\})\subset \intr A$ and both endpoints of $b$ lie on different components of $\partial A$. A \textbf{rung} is a branch of $t$ with endpoints on different components of $\partial A$.

\begin{figure}
\centering
\includegraphics[height=2in]{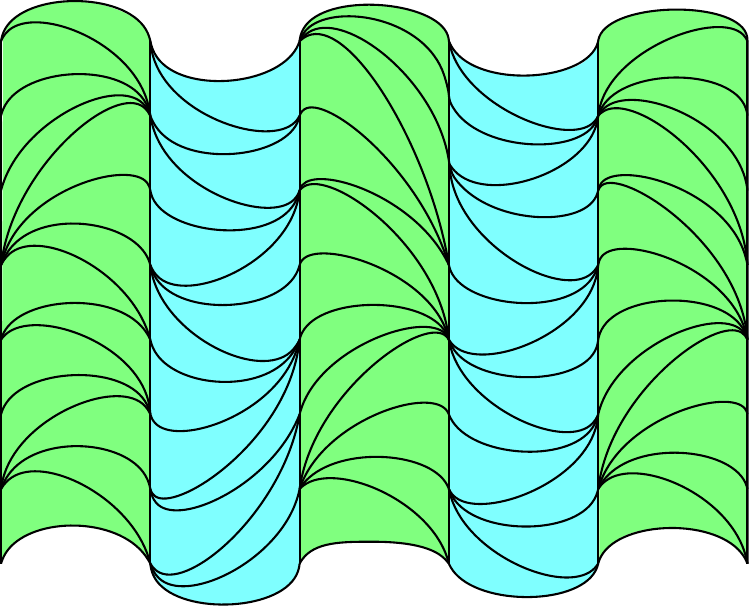}
\label{fig_flattriangulation}
\caption{Part of a pseudohyperbolic train track. We have colored the upward ladders green and the downward ladders cyan for compatibility with \Cref{fig_tauttetrahedron} and \Cref{fig_updowntriangles}.}
\end{figure}

A cooriented train track $t$ on a torus $T$ is \textbf{pseudohyperbolic} if $T$ is a union of ladders with disjoint interiors such that each upward (downward) ladder is disjoint from all other upward (downward) ladders.

\begin{definition}
A taut ideal triangulation $\tau$ of $\mr M$ is \textbf{veering} if it induces a pseudohyperbolic train track on each boundary component of $\mr M$.
\end{definition}

This definition is equivalent to Agol's original definition in \cite{Ago10} and the definition involving an edge bicoloring introduced by Hodgson-Rubinstein-Segerman-Tillmann in \cite{HRST11} which is now common. We choose this definition since the boundary train tracks of veering triangulations are integral to our arguments. To see that the definitions are equivalent, one notes that Agol's veering condition on edges corresponds to a certain condition on $\partial\hbs$ (see \cite[Lemma 2.6]{Lan18}), which implies the ladder behavior described here. 


\subsection{Veering combinatorics: ladderpoles, left/right veer, fans}\label{sec_veeringcombinatorics}

\begin{figure}
\centering
\includegraphics[height=2in]{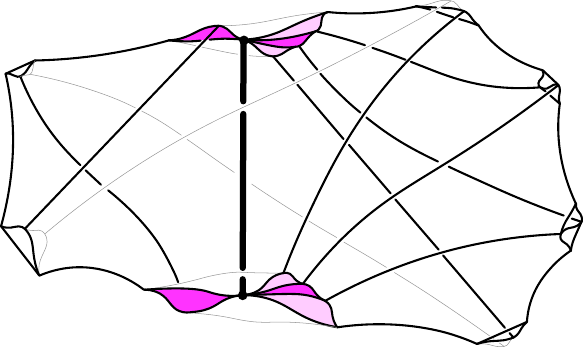}
\caption{In this picture we have drawn all the $\wt\tau$-tetrahedra incident to a bold $\wt \tau$-edge in the universal cover $\wt M^\circ$. The $\wt\tau$-tetrahedra which are not part of the two fans of the bold edge are outlined in gray. The bold $\wt\tau$-edge is incident to a long fan and a short fan. We have colored the flat triangles incident to a boundary point of the dark $\wt\tau$-edge to indicate their hinge- or non-hingeness: the light pink flat triangles are hinge and the darker pink triangles are non-hinge. The reader may wish to check that the bold edge is left veering.}
\label{fig_fanexample}
\end{figure}

Many of the terms defined in this subsection were coined in \cite{FutGue13}.

Let $T$ be a torus with a pseudohyperbolic train track. The boundary components of the ladders in $T$ are called \textbf{ladderpoles}. Together, the ladderpoles form an even-sized collection of parallel curves and determine a slope on $T$ called the \textbf{ladderpole slope}.

Let $\tau$ be a veering triangulation and $T$ be a component of $\partial \mr M$.  We define a notion of left and right on $T$ as follows. Let $L$ be a downward ladder; orient the core curve of $L$ so that it intersects each rung positively. If we look at $L$ from inside $\mr M$ so that our heads point in the direction of the core curve, the ladderpole of $L$ to our left (right) is called the left (right) ladderpole of $L$.

A \textbf{left (right) ladderpole} of $\tau$ is a ladderpole which is the left (right) ladderpole of a downward ladder. One can check that for every $\tau$-edge $e$, either both endpoints of $e$ lie in left ladderpoles or both lie in right ladderpoles. If both endpoints of $e$ lie in left (right) ladderpoles, then $e$ is 
\textbf{left (right) veering}. The quality of being left or right veering is called the \textbf{veer} of an edge. A $\tau$-tetrahedron is called \textbf{hinge} if its top and bottom edges have opposite veer, and \textbf{non-hinge} otherwise. We will also say that a tip of a $\tau$-tetrahedron $\tet$ is hinge or non-hinge if $\tet$ is hinge or non-hinge, respectively.

As with our definition of veering triangulation, this definition of veer agrees with others in the literature, and we have chosen to phrase it in this way in order to highlight what is most relevant to our arguments.

Let $\wt M^\circ$ be the universal cover of $\mr M$ and let $\wt \tau$ be the lift of $\tau$ to $\mr M$. We will refer to $\wt \tau$-tetrahedra, $\wt\tau$-faces, and $\wt \tau$-edges. It follows from \cite[Theorems 3.2 and 5.1]{SchSeg20} that no two $\wt\tau$-edges or $\wt\tau$-faces of a single $\wt \tau$-tetrahedron are identified, which makes it easier to define certain objects in $\mr M$ by first describing an object in $\wt M^\circ$ and then projecting to $\mr M$.

Let $e$ be a $\wt\tau$-edge. If we circle around $e$ and read off the interior angles at $e$ of $\tau$-tetrahedra incident to $e$, there are two $\pi$ angles and some number of $0$ angles. It is a consequence of our veering definition that the two $\pi$ angles are not circularly adjacent. Hence the $\wt\tau$-tetrahedra whose interior angles at $e$ are $0$ are split into two nonempty sets, each of which is called a \textbf{fan} of $e$. A fan is \textbf{short} if it consists of one $\wt\tau$-tetrahedron and \textbf{long} otherwise.

Let $v$ be a switch of $\partial \wt \tau$ corresponding to a $\wt\tau$-edge $e$. A \textbf{fan} of $v$ is the union of all upward, or all downward, flat triangles for which $v$ is a cuspidal vertex. This corresponds exactly to the intersection of one of the fans of $e$ with the component of $\partial \wt M^\circ$ containing $v$. A fan of $v$ is \textbf{short} if it consists of only one flat triangle and \textbf{long} otherwise. 
In $\mr M$, we define short and long fans of $\tau$-tetrahedra and flat triangles to be the images of the corresponding objects in $\wt M^\circ$ under the covering projection.

Note that the coorientation on $\partial\hbs$ allows us to speak of the \textbf{topmost} or \textbf{bottommost} flat triangle in a particular fan.
We now record some facts about fans and hingeness from \cite[Observations 2.6 and 2.7]{FutGue13} in the following lemma. 

\begin{lemma}[Futer--Gu\'eritaud]\label{lem_fans}
A non-hinge flat triangle is incident to a short fan at one of its cuspidal vertices. At the other cuspidal vertex, it is part of a long fan for which it is neither the top nor bottom flat triangle.

A hinge flat triangle is topmost in a fan corresponding to one of its cuspidal vertices, and bottommost in a fan corresponding to its other cuspidal vertex.
\end{lemma}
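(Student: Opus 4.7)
My plan is to lift to the universal cover $\wt M^\circ$ and perform a local analysis at the ideal vertex $w$ of the $\wt\tau$-tetrahedron $\tet$ containing $t$. The three edges of $\tet$ at $w$ are the $\pi$-edge $e_T$ (either the top or bottom edge of $\tet$) and the two $0$-edges $e_1, e_2$; these correspond respectively to the noncuspidal vertex and the two cuspidal vertices $v_1, v_2$ of $t$. The three $\wt\tau$-faces of $\tet$ at $w$ can be labeled $F_{T1}, F_{T2}, F_{12}$ by which pair of these edges each contains. Since the two faces of $\tet$ containing $e_i$ are $F_{Ti}$ and $F_{12}$, the two $\wt\tau$-tetrahedra adjacent to $\tet$ in the cyclic order around $e_i$ are those across $F_{Ti}$ and $F_{12}$. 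The position of $t$ in the fan at $v_i$ is therefore determined by the dihedral angles ($0$ versus $\pi$) at $e_i$ of these two neighbors.

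The core combinatorial assertion I would establish is a dichotomy based on hingeness. In the hinge case, at each $v_i$ exactly one of the two adjacent tetrahedra has a $\pi$-angle at $e_i$, placing $t$ at an endpoint of each fan. Using the coorientation on $\hbs$ to define the topmost and bottommost elements of a fan, one then verifies that $t$ is topmost at one cuspidal vertex and bottommost at the other, with the assignment determined by the veer of $e_1, e_2$ and whether $w$ is a top or bottom vertex of $\tet$. In the non-hinge case, the veer-symmetric structure forces both of the adjacent tetrahedra to have $\pi$-angles at $e_i$ for exactly one of $v_1, v_2$, making the fan at that vertex a short fan consisting of only $t$. At the other cuspidal vertex neither adjacent tetrahedron has a $\pi$-angle at $e_i$, so $t$ lies strictly in the interior of the fan there, hence is neither topmost nor bottommost.

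The technical heart of the argument is verifying the dichotomy above by tracing the veer and coorientation across each face of $\tet$ at $w$. I would unpack this via the ladderpole definition of veer at $v_1, v_2$ on the cusp torus, together with the fact that adjacency relations between $\tet$ and its neighbors propagate the $\pi$-angle data in a way prescribed by the coorientation on $\hbs$. The main obstacle I expect is bookkeeping: several independent parameters (top/bottom status of $w$ in $\tet$, veer of each edge at $w$, and the choice of cuspidal vertex) lead to a case analysis, but natural symmetries of the veering structure, such as the involution swapping $e_1 \leftrightarrow e_2$ (and correspondingly $v_1 \leftrightarrow v_2$) and the reflection exchanging top and bottom, should reduce everything to a small number of representative local pictures.
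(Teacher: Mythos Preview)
The paper does not supply its own proof of this lemma; it is stated as a fact imported from \cite{FutGue13} (Futer--Gu\'eritaud), so there is no argument in the paper to compare against.

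Your outline is the natural approach and the framing is correct: the position of $t$ in the fan at $v_i$ is indeed governed by whether the two $\wt\tau$-tetrahedra adjacent to $\tet$ across $F_{Ti}$ and $F_{12}$ have interior angle $0$ or $\pi$ at $e_i$, and the hinge/non-hinge dichotomy you describe is exactly what the lemma asserts. That said, what you have written is a plan, not a proof. The decisive claim---that in the non-hinge case both neighbors have $\pi$-angles at $e_i$ for exactly one of $v_1,v_2$, while in the hinge case exactly one neighbor does at each $v_i$---is asserted but never verified, and this is precisely where the veering hypothesis must be invoked. To close the gap you need to actually use the definition of veer (e.g.\ via the ladderpole characterization, or equivalently the edge-coloring) to determine, for each face $F$ of $\tet$ at $w$, the dihedral angle at $e_i$ of the tetrahedron on the far side of $F$. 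The symmetries you mention do cut the casework down, but you still owe at least one fully worked representative configuration in each of the hinge and non-hinge cases.
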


\subsection{The stable and unstable branched surfaces}\label{sec:BSstruc}

\begin{figure}[h]
\centering
\includegraphics[width=5in]{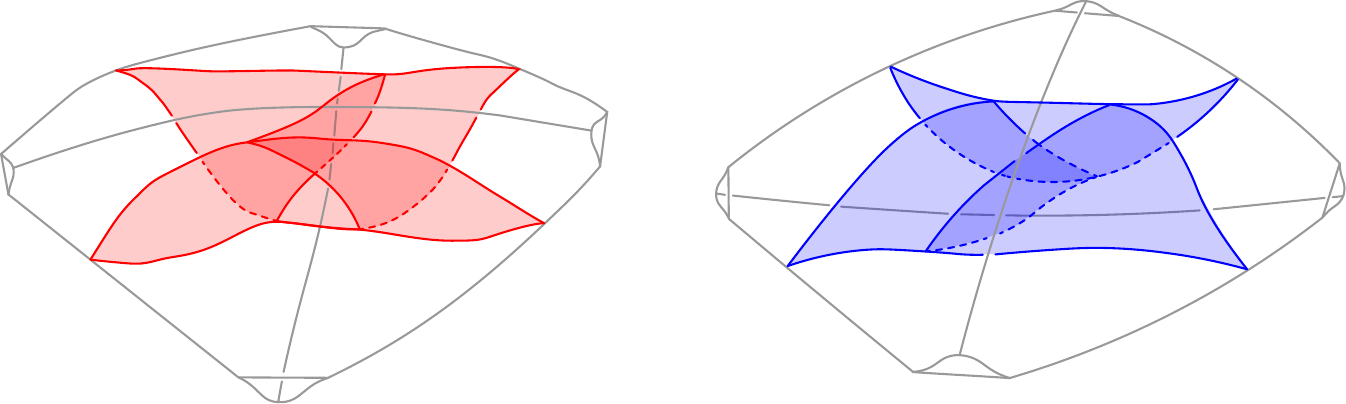}
\caption{Pieces of the stable branched surface $B^s$ (left) and the unstable branched surface $B^u$ (right). The mirror images of these pictures are also possible.}
\label{fig_stableunstable}
\end{figure}

Besides the 2-skeleton $\tau^{(2)}$, there are two branched surfaces naturally associated to the veering triangulation $\tau$. In this section we define them and discuss their combinatorics. As with our discussion of fans in \Cref{sec_veeringcombinatorics}, it is simplest to define these branched surfaces as the images of their lifts to the universal cover $\wt M^\circ$ of $\mr M$.

Let $\wt B^s$ be the branched surface in $\wt M^\circ$ defined as follows: $\wt B^s$ is topologically the 2-skeleton of the dual complex to $\wt \tau$. It is endowed with the smooth structure of a branched surface with generic branch locus determined by the following property: for any 2-cell $\hex$ of $\tau$, $\hex\cap B^s$ is a train track with one stop on each $\wt \tau$-edge of $\hex$, and a single switch which points to the bottom edge of the unique $\tau$-tetrahedron of which $\hex$ is a bottom $\tau$-face. We also require that $\wt B^s$ be preserved by $\pi_1(\mr M)$ acting by deck transformations on $\wt M^\circ$, which is possible since the deck group preserves our combinatorial notions of top and bottom. We define the \textbf{stable branched surface} of $\tau$, denoted $B^s$, to be the image of $\wt B^s$ in $\mr M$ under the covering projection.

The \textbf{unstable branched surface} of $\tau$, denoted $B^u$, is defined in the same style but with a different condition: we require $\wt B^u$ to have the cellular structure of the dual complex to $\wt \tau$ and to intersect each $\wt\tau$-face $\hex$ in a generic train track whose unique switch points toward the $\wt\tau$-edge which is the top $\wt\tau$-edge of the $\wt \tau$-tetrahedron of which $\hex$ is a top face. Then we let $B^u$ be the image of $\wt B^u$ under the covering projection.
\begin{remark}
In \cite{SchSeg19}, $B^s$ and $B^u$ are called the \emph{upper and lower branched surfaces in dual position} and are colored green and purple respectively. We have chosen to color them red and blue in analogy with the stable and unstable singular foliations of a pseudo-Anosov flow, which are usually colored red and blue.
\end{remark}

The \textbf{dual graph} of $\tau$, denoted $\Gamma$, is the 1-skeleton of the complex dual to $\tau$. There is a natural orientation on $\Gamma$, defined by the property that the orientation on each edge agrees with the coorientation of the corresponding 2-cell of $\tau$. Let $\wt \Gamma$ denote the lift of $\Gamma$ to $\wt M^\circ$.
Since $B^s$ and $B^u$ are two different smoothings of the same 2-complex, they are isotopic in $\mr M$. Their branch loci can both be naturally identified with $\Gamma$ such that triple points are identified with $\Gamma$-vertices. Thus we will always think of the branch curves and segments of $B^s$ and $B^u$ as being oriented compatibly with $\Gamma$.

We now discuss how $B^s$ and $B^u$ interact with the veers of $\tau$-edges. As before, it is convenient to work in $\wt M^\circ$.
Let $\hex$ be a $\wt\tau$-face. Then $\hex^s:=\hex\cap\wt B^s$ is a train track with a single generic switch, as is $\hex^u:= \hex \cap \wt B^u$. These train track switches recover the veer of $\wt \tau$-edges and behave in a controlled way with respect to $\partial \wt \tau^{(2)}$, as we describe in the next two lemmas. Each lemma is most easily understood via the included pictures, but we also provide formal statements.

\begin{lemma}[train tracks from veering data]\label{lem_traintrackfacts}
Let $\hex$ be a $\wt \tau$-face. Then $\hex^s=\hex\cap \wt B^s$ and $\hex^u=\hex\cap \wt B^u$ can be recovered from the veer of the $\wt \tau$-edges of $\hex$ as in the following picture,
\begin{center} \includegraphics[]{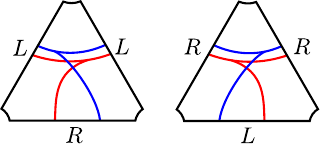} \end{center}
where the coorientation points out of the page and we have drawn $\hex^s$ in red and $\hex^u$ in blue. In words: 
\begin{itemize}
\item If $\hex$ has two \emph{left} veering $\wt\tau$-edges $e_1, e_2$ labeled so that $e_2$ follows $e_1$ in the counterclockwise order of the $\wt \tau$-edges of $\hex$ , then the switch of $\hex^s$ points toward $e_1$ and the switch of $\hex^u$ points toward $e_2$.
\item If $\hex$ has two \emph{right} veering $\wt \tau$-edges $e_1, e_2$ labeled so that $e_2$ follows $e_1$ in the counterclockwise order, then the switch of $\hex^u$ points toward $e_1$ and the switch of $\hex^s$ points toward $e_2$.
\end{itemize}
\end{lemma}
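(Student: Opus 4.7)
The plan is to unpack the defining properties of $\wt B^s$ and $\wt B^u$, reducing the lemma to a combinatorial identification, and then to leverage Agol's veering condition on individual tetrahedra. First, by the construction of $\wt B^s$ (respectively $\wt B^u$), the unique switch of $\hex^s$ points toward the bottom edge $e_b^+$ of the $\wt\tau$-tetrahedron $T_+$ of which $\hex$ is a bottom face (respectively, the unique switch of $\hex^u$ points toward the top edge $e_t^-$ of the $\wt\tau$-tetrahedron $T_-$ of which $\hex$ is a top face). So the lemma amounts to identifying $e_b^+$ and $e_t^-$ among the three edges of $\hex$ from the veers of those edges alone.

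The key local fact I will use is a standard consequence of the veering condition on a single tetrahedron: the four side edges of a taut veering tetrahedron split into two pairs of opposite edges, with one pair left-veering and the other right-veering. Consequently, the two side edges lying in any single triangular face of a $\wt\tau$-tetrahedron have opposite veers, while the remaining (top or bottom) edge is unconstrained. Applied to $\hex$ as a bottom face of $T_+$, this says $e_b^+$ is the unique edge of $\hex$ whose removal leaves two edges of opposite veer; the same holds for $e_t^-$ viewed inside $T_-$. In particular, the veer pattern on the edges of $\hex$ is either $(L,L,R)$ or $(L,R,R)$ in cyclic order, so the two cases stated in the lemma are exhaustive, and both $e_b^+$ and $e_t^-$ must lie among the two equi-veer edges of $\hex$.

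To finish the identification, I first observe $e_b^+\neq e_t^-$: if they coincided in an edge $e$, then both $T_+$ and $T_-$ would contribute interior angle $\pi$ to $e$, exhausting $2\pi$ and forcing the two $\pi$-angles at $e$ to be circularly adjacent, contradicting the non-adjacency statement recalled in the discussion of fans in \Cref{sec_veeringcombinatorics}. Thus $e_b^+$ and $e_t^-$ are the two distinct equi-veer edges of $\hex$. The final step is a direct check in a standard model of a taut veering tetrahedron (top edge horizontal at height $+1$, bottom edge perpendicular at height $-1$, coorientation on the bottom face pointing upward into the tetrahedron), applying the convention that orientation wedge coorientation is a positive basis; this pins down the counterclockwise order on $\hex$ and shows that in the left-veering case $e_b^+=e_1$ and $e_t^-=e_2$, while in the right-veering case $e_b^+=e_2$ and $e_t^-=e_1$, matching the picture. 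The main obstacle is purely this last piece of orientation bookkeeping; conceptually the proof is driven by the opposite-veer property of side edges within a face together with the non-adjacency of $\pi$-angles.
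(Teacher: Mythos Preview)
Your proof is correct and far more detailed than the paper's, which simply reads ``The reader can check this in order to get comfortable with the combinatorics of veering triangulations.'' The structure you give---reduce to identifying $e_b^+$ and $e_t^-$, use the opposite-veer property of the two side edges in any face of a veering tetrahedron to place both among the two equi-veer edges of $\hex$, then invoke non-adjacency of the $\pi$-angles at an edge to conclude $e_b^+\ne e_t^-$, and finish with a model check for the counterclockwise ordering---is exactly the kind of verification the paper has in mind. One minor slip: the phrase ``$e_b^+$ is the unique edge of $\hex$ whose removal leaves two edges of opposite veer'' is not literally correct (both equi-veer edges have that property), but your actual conclusion that $e_b^+$ and $e_t^-$ lie among the two equi-veer edges is right, and the argument proceeds soundly from there.
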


\begin{proof}
The reader can check this in order to get comfortable with the combinatorics of veering triangulations.
\end{proof}

\begin{remark}
We emphasize that while we have specified the position of $B^s$ and $B^u$ relative to $\tau^{(2)}$, we have \emph{not} specified the position of $B^s$ and $B^u$ relative to each other, and we will not do so. The salient feature of the train tracks in the lemma statement is the configuration of each train track relative to the right and left veering edges, not relative to the other train track.  
\end{remark}

Now suppose that $\hex$ is a $\wt\tau$-face with tips labeled $t_1,t_2, t_3$. Each $t_i$ determines a unique patch $t_i^s$ of $\hex\cap \wt B^s$ and $t_i^u$ of $\hex\cap \wt B^u$. Note that for each $i$, at most one of $t_i^s, t_i^u$ has a switch.

\begin{lemma}\label{lem_tipcondition}
With notation as above, $t_i$ is a ladderpole branch of $\partial \wt \tau^{(2)}$ if and only if $t_i^s$ and $t_i^u$ have no switches, and a rung of $\partial\wt \tau^{(2)}$ crossing an upward (downward) ladder if and only if the patch $t_i^s$ ($t_i^u$) has a switch. (See \Cref{fig_tipcondition}.)
\end{lemma}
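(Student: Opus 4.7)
The plan is to extract everything from the defining property of $\wt B^s$ and $\wt B^u$, which pins down where the central switch of $\hex^s$ (resp.\ $\hex^u$) points, combined with the combinatorics of the two $\wt\tau$-tetrahedra incident to $\hex$. Write $\tet$ for the $\wt\tau$-tetrahedron having $\hex$ as a bottom face and $\tet'$ for the one having $\hex$ as a top face; let $e_\tet$ be the bottom edge of $\tet$ and $e_{\tet'}$ the top edge of $\tet'$, both $\wt\tau$-edges of $\hex$. By construction of $\wt B^s$ and $\wt B^u$, the central switch of $\hex^s$ points at $e_\tet$ and the central switch of $\hex^u$ points at $e_{\tet'}$.

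First I would carry out a local analysis of the three patches of $\hex-\hex^s$. Since $\hex^s$ is a generic trivalent ``Y'' on $\hex$ with its switch pointing at $e_\tet$, the small-side branch terminates at the stop on $e_\tet$ and the two large-side branches terminate on the other two $\wt\tau$-edges. The unique patch lying between the two large-side branches is then the one that inherits a cusp at the central switch; this patch contains the tip of $\hex$ at the vertex \emph{opposite} $e_\tet$, while the two patches flanking the small-side branch are smooth at the central switch. The identical argument for $\hex^u$ shows $t_i^u$ carries a switch precisely when the corresponding vertex is opposite $e_{\tet'}$.

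Next I would translate ``opposite $e_\tet$'' into tetrahedral data. Both endpoints of $e_\tet$ are vertices of $\hex$ (they are the two vertices of $\tet$ on its bottom edge), so the vertex $U_\tet$ of $\hex$ opposite $e_\tet$ is the unique vertex of $\hex$ on the top edge of $\tet$, and hence contributes an upward flat triangle from $\tet$; the other two vertices of $\hex$ contribute downward flat triangles from $\tet$. Symmetrically, $D_{\tet'}$, the vertex opposite $e_{\tet'}$, is the unique vertex of $\hex$ on the bottom edge of $\tet'$ and contributes a downward flat triangle from $\tet'$. By \Cref{lem_traintrackfacts}, the switches of $\hex^s$ and $\hex^u$ point at distinct $\wt\tau$-edges of $\hex$, so $U_\tet\neq D_{\tet'}$; let $W$ be the third vertex of $\hex$.

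Finally I would compare, at each of the three vertices, the two adjacent flat triangles. At $U_\tet$ both are upward (from $\tet$ directly, and from $\tet'$ because $U_\tet\neq D_{\tet'}$ forces $U_\tet$ to lie on the top edge of $\tet'$), so $t_{U_\tet}$ is a rung of an upward ladder, and among its patches only $t_{U_\tet}^s$ has a switch. Symmetrically, $t_{D_{\tet'}}$ is a rung of a downward ladder and only $t_{D_{\tet'}}^u$ has a switch. At $W$ the flat triangle from $\tet$ is downward while the one from $\tet'$ is upward, so $t_W$ straddles adjacent upward and downward ladders and must be a ladderpole branch, and neither $t_W^s$ nor $t_W^u$ has a switch. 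The three biconditionals in the statement fall out of this case analysis. The main obstacle is the local patch analysis at the central trivalent switch: one has to verify carefully that among the three patches of a ``Y'' on a disk, the one opposite the small-side branch is exactly the one carrying the cusp, while the other two are smooth there. Once that and the analogous $\hex^u$ statement are in hand, the remainder is bookkeeping with the upward/downward convention for flat triangles.
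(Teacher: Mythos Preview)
Your argument is correct. It differs from the paper's route, however. The paper's proof is a two-line appeal: recall that endpoints of left (right) veering edges lie on left (right) ladderpoles of downward ladders, then read the switch directions off of \Cref{lem_traintrackfacts} and match them with the ladderpole/rung dichotomy. In other words, the paper passes through the edge-veer labelling as the intermediary between the branched surfaces and the boundary train track.

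You instead work directly with the tetrahedral picture: the defining property of $\wt B^s$ and $\wt B^u$ tells you the central switches point at $e_\tet$ and $e_{\tet'}$; a local patch analysis identifies which tip inherits the cusp; and the top/bottom-edge dichotomy on $\tet$ and $\tet'$ tells you whether the adjacent flat triangles are upward or downward, from which the rung/ladderpole conclusion follows. You only invoke \Cref{lem_traintrackfacts} for the single fact $e_\tet\neq e_{\tet'}$ (equivalently, that the two $\pi$-angles around an edge are not circularly adjacent in a veering triangulation). Your approach is more self-contained and makes the mechanism transparent without passing through the left/right veer bookkeeping; the paper's approach is terser but leans on the reader having internalised the veer--ladderpole correspondence and the picture in \Cref{lem_traintrackfacts}.
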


\begin{figure}[!h]
\centering
\includegraphics[]{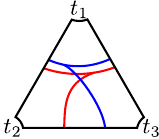}
\caption{With tips labeled as in the picture, $t_1$ is a ladderpole branch, $t_2$ is a rung crossing an upward ladder, and $t_3$ is a rung crossing a downward ladder.}
\label{fig_tipcondition}
\end{figure}

\begin{proof}
We recall that both ends of a left (right) veering edge meet $\partial \mr M$ in the left (right) ladderpole of a downward ladder. The lemma is now a consequence of \Cref{lem_traintrackfacts}. 
\end{proof}

\subsection{Branch curves, stable loops, unstable loops}

Following \cite{SchSeg20} we define a \textbf{normal curve} for $\tau$ to be an oriented loop $\gamma$ which is smoothly immersed in $\tau^{(2)}$, transverse to $\tau$-edges, and does not ``backtrack" in the sense that if $\wt \gamma$ is a lift of $\gamma$ to $\wt M^\circ$ then for each $\wt \tau$-face $\hex$ intersected by $\wt \gamma$, each component of $\wt\gamma\cap \hex$ has its endpoints on different $\wt\tau$-edges.

Note that for any $\wt \tau$-edge $e$ and for each fan of $e$, the coorientation on $\tau^{(2)}$ totally orders the $\wt\tau$-faces incident to $e$ belonging to $\wt \tau$-tetrahedra in the fan.
A normal curve $\gamma$ is a \textbf{stable loop} if a lift $\wt \gamma$ to $\wt M^\circ$ has the following property: at each $\wt \tau$-edge $e$ intersected by $\gamma$, $\gamma$ passes from a non-topmost face incident to $e$ to a topmost one (recall $\gamma$ is oriented). Similarly, $\gamma$ is an \textbf{unstable loop} if a lift $\wt \gamma$ to $\wt M^\circ$ has the property that at each $\wt \tau$-edge $e$ intersected by $\gamma$, $\gamma$ passes from a bottommost face incident to $e$ to a non-topmost one. These curves were first studied in \cite{Lan19}.


Let $\gamma$ be a stable loop and $\wt \gamma$ its lift to $\wt M^\circ$. We say $\gamma$ is \textbf{shallow} if, whenever $\wt\gamma$ traverses a $\wt \tau$ edge, it passes from a second-topmost $\wt\tau$-face to a topmost $\wt\tau$-face.

Similarly, let $\gamma$ be an \emph{unstable} loop and $\wt \gamma$ its lift to $\wt M^\circ$. We say $\gamma$ is \textbf{shallow} if, whenever $\wt\gamma$ traverses a $\wt \tau$-edge, it passes from a bottommost $\wt\tau$-face to a second-bottommost $\wt\tau$-face.

Let $\gamma$ be a stable loop. Then by perturbing $\gamma$ slightly in the direction of the coorientation of $\tau^{(2)}$, we can homotope $\gamma$ to a curve which is positively transverse to $\tau^{(2)}$. Similarly if $\eta$ is an unstable loop, we can homotope $\eta$ slightly \emph{against} the coorientation to produce a closed positive transversal to $\tau^{(2)}$. The images of $\gamma$ and $\eta$ under these homotopies are called the \textbf{pushup} and \textbf{pushdown} of $\gamma$  and $\eta$, respectively. See \Cref{fig_pushupdown}.

If a normal curve is both a stable and unstable loop, meaning it always passes from bottommost to topmost faces,
we call it a \textbf{normal branch loop}. This name is justified by the following lemma.

\begin{figure}
\centering
\includegraphics[height=1 in]{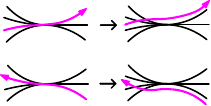}
\caption{Side views of pushing up a stable loop (top) and pushing down an unstable loop (bottom), with orientation indicated.}
\label{fig_pushupdown}
\end{figure}

\begin{lemma}[Characterization of branch curves]\label{lem_branchlines}
Suppose $\gamma$ is a normal branch loop. Let $\gamma^+$ and $\gamma^-$ be the pushup and pushdown of $\gamma$, respectively. Then the $\Gamma$-cycles determined by $\gamma^+$ and $\gamma^-$ are branch curves of $B^u$ and $B^s$, respectively. Furthermore, every branch curve of $B^s$ and $B^u$ can be obtained in this way.
\end{lemma}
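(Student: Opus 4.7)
The plan is to prove the lemma by a local analysis inside each $\tau$-tetrahedron, leveraging the explicit combinatorial description of $B^u$ and $B^s$ given by \Cref{lem_traintrackfacts} and \Cref{fig_stableunstable}. Both branched surfaces are smoothings of the 2-skeleton of the dual complex to $\tau$, so their branch loci are topologically identified with $\Gamma$; a branch curve of $B^u$ (resp.\ $B^s$) is then a $\Gamma$-cycle that is smooth with respect to the smoothing chosen at each $\Gamma$-vertex.

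First I would make precise how the pushup $\gamma^+$ determines a $\Gamma$-cycle. Each arc of $\gamma$ lying in a single $\tau$-face $\hex$ corresponds to an arc of $\gamma^+$ inside the $\tau$-tetrahedron having $\hex$ as a bottom face; when $\gamma$ crosses a $\tau$-edge $e$ from $\hex$ to $\hex'$, the pushup swings around $e$ on the upper side, crossing the intermediate $\tau$-faces of the fan of $e$ containing $\hex$ and $\hex'$. Using the stable-loop hypothesis, which says $\hex'$ is topmost in that fan, together with \Cref{lem_fans}, I would spell out the precise sequence of $\tau$-tetrahedra and $\tau$-faces that $\gamma^+$ visits. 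A symmetric description based on the unstable-loop hypothesis applies to $\gamma^-$. With this in hand, I would verify branching compatibility: a $\Gamma$-cycle is a branch curve of $B^u$ exactly when, at each $\Gamma$-vertex $v_T$ it traverses, the pair of incident $\Gamma$-edges is smoothly joined by $B^u$ inside $T$, and the smoothing at $v_T$ is determined by the switches of $B^u \cap \hex$ on the four $\tau$-faces $\hex$ of $T$ according to \Cref{lem_traintrackfacts} and \Cref{fig_stableunstable}. I would check, case by case across local models, that the $\Gamma$-edges used by $\gamma^+$ at each $v_T$ match the smoothly joined pair prescribed by $B^u$. The fact that $\gamma$ is simultaneously an unstable loop is what ensures compatibility at every $\tau$-tetrahedron visited, including those where the stable property alone does not pin down the exit face. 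The symmetric verification for $\gamma^-$ and $B^s$ completes the first half of the lemma.

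For the converse, given a branch curve $c$ of $B^u$, I would push $c$ slightly against the coorientation to land it on $\tau^{(2)}$ as a normal curve $\gamma$. Reversing the local analysis above, the smoothness of $c$ at each $\Gamma$-vertex forces $\gamma$ to satisfy both the stable and unstable loop conditions, so $\gamma$ is a normal branch loop with $\gamma^+$ determining the same $\Gamma$-cycle as $c$. A symmetric recipe recovers any branch curve of $B^s$ as the pushdown of a normal branch loop. The main obstacle is the finite but delicate combinatorial check in the second step: one must verify, in each local model combining fan length, hinge versus non-hinge type, and the two smoothing configurations of \Cref{fig_stableunstable}, that the $\tau$-faces picked out by $\gamma^{\pm}$ at each $\tau$-tetrahedron agree with those joined by the branching of $B^u$ or $B^s$. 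The bookkeeping is routine in principle but requires care to unify cleanly.
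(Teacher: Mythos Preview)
Your proposal is correct and follows essentially the same approach as the paper. In fact, the paper does not provide a formal proof of this lemma at all: it simply states the result and ``encourage[s] the reader to check it by drawing the intersection of $B^s$ and $B^u$ with the faces of all $\tau$-tetrahedra incident to a given $\tau$-edge,'' referring alternatively to \cite[\S 6]{SchSeg19}. Your local combinatorial analysis via \Cref{lem_traintrackfacts} is exactly the verification the paper invites, carried out with more care and detail than the paper itself provides.
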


\begin{proof}
Recall that we can identify the branch locus of $B^s$ with the dual graph $\Gamma$. We will first prove that the $\Gamma$-cycle determined by $\gamma^-$, which we will also call $\gamma^-$, is a stable branch curve (identifying $\Gamma$ with the branch locus of $B^s$). Let $\wt\gamma^-$ be a lift of $\gamma^-$ to the universal cover $\wt M$, and write $\wt\gamma^-$ as a sequence of $\wt \Gamma$-edges $(\dots, e_{-1},e_0, e_1,e_2\dots)$. Each ordered pair $(e_i,e_{i+1})$, in the language of \cite{LMT20}, forms a \emph{turn} of $\wt \gamma^-$. A turn $(e_i,e_{i+1})$ is \emph{branching} if $e_i\cup e_{i+1}$ is a branch segment of $\wt B^s$. We must show that every turn of $\wt \gamma^-$ is branching.

Let $f_i$ be the $\wt\tau$-face dual to $e_i$. Note that for all $i$, $f_i$ and $f_{i+1}$ lie respectively on the bottom and top of a single $\wt\tau$-tetrahedron $\tet_i$ and meet along a single $\wt\tau$-edge $f_i\cap f_{i+1}$. We have the following facts:
\begin{enumerate}[label=(\alph*)]
\item $(e_i, e_{i+1})$ is branching if and only if $f_i\cap f_{i+1}$ veers oppositely from the top edge of $\tet_i$ (\cite[Lemma 4.5]{LMT20}), and 
\item for any $\wt\tau$-edge $a$, the top $\wt\tau$-edge of a tetrahedron $\tet$ for which $a$ is a $0$-edge has the same veer as $a$ if and only if $\tet$ is topmost in its fan at $a$ (\cite[Fact 1]{LMT20}).
\end{enumerate}

Let $v_i$ be the terminal $\wt\Gamma$-vertex of $e_i$ and the initial vertex of $e_{i+1}$, so that $\tet_i$ is the unique $\wt\tau$-tetrahedron containing $v_i$. If $v_i$ is non-topmost in the fan of a $\wt\tau$-edge $a$ traversed by $\gamma$, then the top edge of $\tet_i$ veers oppositely from $a$ by fact (b), so fact (a) implies that $(e_i,e_{i+1})$ is a branching turn. 

Otherwise $v_i$ is topmost in the fan of a $\wt\tau$-edge $a$ traversed by $\wt\gamma$. Because $\gamma$ is a normal branch loop, each $\wt\tau$-edge traversed by $\gamma$ is the top $\wt\tau$-edge of the topmost $\wt\tau$-tetrahedron in a fan of the previous $\wt\tau$-edge traversed by $\wt\gamma$, so all $\wt\tau$-edges traversed by $\wt \gamma$ have the same veer by fact (b). If $\hex$ is the $\wt\tau$-face atop $\tet_i$ traversed by $\wt\gamma$, then this fact gives that $\hex\cap f_i=a$ has the same veer as the top $\wt\tau$-edge of $\tet_i$. Since $f_{i+1}$ is the other face besides $\hex$ atop $\tet_i$, the veer of $f_i\cap f_{i+1}$ is opposite from that of the top $\wt\tau$-edge of $\tet_i$. Hence $(e_i,e_{i+1})$ is a branching turn by fact (a).
The proof that the pushup of a normal branch loop is an unstable branch curve is symmetric.

It remains only to prove the statement that each branch curve of $B^s$ is the pushdown of a normal branch loop and that each branch curve of $B^u$ is the pushup of a normal branch loop. Let $c$ be a branch curve of $B^s$, and let $\wt c$ be its lift to $\wt M$. Then \Cref{lem_tipcondition} implies that there is an upward ladder $L$ such that $\wt c$ passes through exactly the $\tau$-tetrahedra that have a tip meeting the interior of $L$. From the picture of an upward ladder it is clear that $c$ can be isotoped into the $\tau$-faces corresponding to either ladderpole of $L$. Both of these isotopies are the inverses of pushdowns. Again, the proof for branch curves of $B^u$ is symmetric.
\end{proof}

Let $\gamma$ be a branch line of $B^s$ or $B^u$. Then as in the proof of the previous lemma, by \Cref{lem_tipcondition} there is an upward or downward ladder such that $\gamma$ passes through exactly the $\tau$-tetrahedra that possess a tip meeting the interior of $L$. It follows immediately from \Cref{lem_fans} that $L$ contains hinge flat triangles: if $t$ is a flat triangle in $L$ which is non-hinge, then by \Cref{lem_fans} $t$ is part of a long fan on one of its sides, the top and bottom flat triangles of which must be hinge. This implies the following lemma.

\begin{lemma}\label{lem_branchlinehinge}
Every branch line of $B^s$ or $B^u$ passes through a hinge $\tau$-tetrahedron.
\end{lemma}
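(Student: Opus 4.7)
The plan is to combine the discussion immediately preceding the statement (which describes exactly which tetrahedra the pushup/pushdown of a normal branch loop visits) with \Cref{lem_fans} (which locates hinge data at the top and bottom of each fan).

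By \Cref{lem_branchlines}, every branch curve of $B^s$ (resp.\ $B^u$) is the pushdown (resp.\ pushup) of some normal branch loop $\ell$. Pick any $\tau$-edge $e$ traversed by $\ell$. The paragraph after \Cref{lem_branchlines} records that the pushup of $\ell$ passes through every $\tau$-tetrahedron in \emph{one} of the two fans of $e$ (together with the tetrahedron having $e$ as its bottom edge), and symmetrically the pushdown passes through every $\tau$-tetrahedron of the other fan (together with the tetrahedron having $e$ as its top edge). So the lemma reduces to the claim that \emph{every fan contains a hinge $\tau$-tetrahedron}.

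For this I would invoke \Cref{lem_fans}: any flat triangle that is topmost (equivalently bottommost) in its fan is hinge. Concretely, fix a fan $\mathcal{F}$ of $e$, choose a cuspidal vertex $v$ of the flat triangles of $\mathcal{F}$ lying in $\partial \mr M$, and look at the corresponding fan of flat triangles at $v$. This fan has a topmost flat triangle $t$ (with respect to the coorientation on $\partial\tau^{(2)}$), which by \Cref{lem_fans} must be hinge, since a non-hinge flat triangle is never the topmost (or bottommost) member of its fan. The $\tau$-tetrahedron $\taut$ carrying $t$ as one of its tips is therefore hinge, and by construction $\taut\in\mathcal{F}$.

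Combining the two paragraphs: the branch curve determined by $\ell$ passes through $\taut$, which is hinge. The main thing to verify carefully is that the "topmost/bottommost flat triangle of a fan" as it appears in \Cref{lem_fans} really does correspond to a $\tau$-tetrahedron belonging to the fan of $e$ that the pushup or pushdown actually visits; this is immediate from the definition in \Cref{sec_veeringcombinatorics} identifying the fan of $v$ with the intersection of a fan of $e$ with $\partial\mr M$. No further work is needed — this is the only step with any content, and it is a direct appeal to the cited structural lemma.
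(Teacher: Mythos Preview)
Your argument is exactly the paper's: the paper's entire proof is the clause ``since every fan contains a hinge $\tau$-tetrahedron'' immediately preceding the lemma, combined with \Cref{lem_branchlines} and the paragraph describing which tetrahedra the pushup/pushdown visits; you reproduce this and add a justification via \Cref{lem_fans}.

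One small caution on that added justification. You write that ``a non-hinge flat triangle is never the topmost (or bottommost) member of its fan,'' but \Cref{lem_fans} actually says a non-hinge flat triangle \emph{is} incident to a short fan at one of its cuspidal vertices---so there it is the sole, hence topmost, member. Your deduction is clean for long fans (where \Cref{lem_fans} does force the extremal flat triangles to be hinge), and the paper's own one-line assertion sits at the same level of informality, so the approaches match in both strategy and rigor.
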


We conclude our development of veering combinatorics by giving a characterization of shallow stable/unstable loops. 

Let $L\subset \del\mr M$ be a ladder, and let $\wt L$ be a lift of $L$ to $\wt M^\circ$. Let $S(\wt L)$ be the union of all $\wt\tau$-tetrahedra that meet $\wt L$ in its interior (for mnemonic purposes, $S(\wt L)$ is a cellular \emph{saturation} of $\wt L$ in $\wt M^\circ$). Let $(\tet_n)_{n\in \Z}$ be the bi-infinite sequence of all $\wt\tau$-tetrahedra contained in $L$, ordered so that for all $i$, a top face of $\tet_{i}$ is identified with a  bottom face of $\tet_{i+1}$.
Assume for the moment that $L$ is an upward ladder, and
let $e_i$ be the bottom $\wt\tau$-edge of $\tet_i$.
Note that each $\tet_i$ contains a unique face, call it $\hex_i$, which does not meet $\wt L$. Since $\wt L$ is upward, $\hex_i$ lies in the bottom of $\tet_i$; we let $\hex_i'$ be the other $\wt\tau$-face in the bottom of $\tet_i$. Observe that $\hex_i'$ is the unique face along which $\tet_{i-1}$ and $\tet_i$ meet. This implies that $\hex_i$ is topmost in a fan of $e_i$ and second-topmost in a fan of $e_{i+1}$. Hence the core of the bi-infinite strip $\bigcup_i \hex_i$, which we can choose to be invariant under the action of the deck transformation corresponding to the core of $L$ and preserving $\wt L$, projects to a shallow stable loop in $\mr M$.
A symmetric discussion holds if $L$ is downward, where we produce a shallow \emph{unstable} loop by considering the $\wt\tau$-faces in $S(\wt L)$ not meeting $\wt L$.

We now show that this is the only way shallow stable/unstable loops arise. In the following argument we use notation to mirror the above construction of a shallow stable loop. Suppose that $\gamma$ is a shallow stable loop in $M$ and let $\wt\gamma$ be a lift to $\wt M^\circ$. Let $(\hex_n)_{n\in \Z}$ be the sequence of $\wt\tau$-faces traversed by $\wt\gamma$, let $\tet_i$ be the $\wt\tau$-tetrahedron atop $\hex_i$, let $\hex_i'$ be the bottom $\wt\tau$-face of $\tet_i$ distinct from $\hex_i$, and let $e_i$ be the bottom $\wt\tau$-edge of $\tet_i$. Note that $\hex_i$ and $\hex_{i+1}$ meet along $e_{i+1}$. Let $t_i$ be the tip of $\tet_i$ disjoint from $\hex_i$ and note that $t_i$ is an upward flat triangle. Since $\gamma$ is a shallow stable loop, $\hex_i$ is second-topmost in a fan of $e$ and $\hex_{i+1}$ is topmost in the other fan of $e$. It follows that $\tet_{i+1}$ meets $\tet_i$ along a bottom face, which is necessarily $\hex_{i+1}'$. This implies that the tip $t_{i+1}$ meets $t_i$ along a branch of $\partial \wt\tau^{(2)}$. Therefore $\bigcup_i t_i$ is the lift of an upward ladder in $\partial \mr M$. As in the construction above, there is a parallel argument showing that every shallow unstable loop gives rise to a downward ladder. We record what we have learned in the following lemma.

\begin{lemma}[Characterization of shallow stable and unstable loops]\label{lem_shallowloops}
If $\wt L$ is the lift to $\wt M^\circ$ of an upward (downward) ladder $L$, and $S(\wt L)$ is defined as above, then the union of all faces in $S(\wt L)$ not meeting $\wt L$ carries a curve covering a shallow stable (unstable) loop in $M$. Furthermore, all shallow stable (unstable) loops can be obtained in this way.
\end{lemma}


\section{Flattened surfaces}\label{sec_flatsurf}

\subsection{Conventions and preliminaries to flattening}\label{sec_flatprelim}

We now embark on the proof of Theorems \ref{thma} and \ref{thmb}. We first need to define the notion of prongs from the theorem statement. Let $t$ be a pseudohyperbolic train track on a torus $T$, let $\lambda\subset T$ be the union of all ladderpoles of $t$, and let $s$ be a slope on $T$. Then we define 
\[
\prongs(s):=\frac{i_g(s,\lambda)}{2}
\]
where $i_g$ is geometric intersection number. We say $\prongs(s)$ is the number of prongs of the slope $s$.

For the remainder of the paper, except the final section, we will be in the following situation:
\begin{itemize}
\item $M$ is a closed 3-manifold,
\item $U$ is a union of solid tori in $M$,
\item $\tau$ is a veering triangulation of $\mr M:= M-\intr(U)$, and 
\item the meridional slope for each component of $U$ has $\ge3$ prongs with respect to $\partial\hbs$.
\end{itemize}

Hence we will think of $\mr M$ as a closed submanifold of $M$, and $\tau^{(2)}$ as a partial branched surface in $M$.

\begin{lemma}\label{lem_irreducible}
$M$ is irreducible.
\end{lemma}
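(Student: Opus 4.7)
Prove $M$ is irreducible by contradiction. Assume $S \subset M$ is an essential $2$-sphere; the aim is to derive a contradiction from the topology of $\mr M$ together with the $\geq 3$ prong hypothesis. The background facts I will use are standard for any manifold admitting a taut ideal triangulation (and in particular for $\mr M$, by Lackenby \cite{Lac00}): $\mr M$ is irreducible, and $\partial \mr M$ is incompressible in $\mr M$.

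Isotope $S$ to be transverse to $\partial U$ and choose the isotopy representative minimizing $|S \cap \partial U|$. If this intersection is empty, $S$ lies either in $\mr M$ (where it bounds a ball by irreducibility of $\mr M$) or in a single component of $U$ (a solid torus), giving a contradiction in either case. Otherwise take a circle $c \subset S \cap \partial U$ innermost on $S$, bounding a disk $D \subset S$ with $D \cap \partial U = c$. If $D \subset \mr M$, then $c$ cannot be essential in $\partial \mr M$ (by incompressibility of the torus boundary), so it bounds a disk $D' \subset \partial \mr M$; by irreducibility of $\mr M$ the sphere $D \cup D'$ bounds a ball through which $S$ can be isotoped to strictly reduce $|S \cap \partial U|$, contradicting minimality. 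The symmetric reduction disposes of the case where $D$ lies in a component $U_i$ of $U$ and $\partial D$ is inessential in $\partial U_i$. The only surviving case is that $D$ is a meridian disk of some filling solid torus $U_i$.

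To rule out this final case I would invoke the $\geq 3$ prong hypothesis on the meridional slope $s_i$ of $U_i$. The plan is to extend the partial branched surface $\hbs \subset \mr M$ across each filling torus $U_i$ by coning the $2\prongs(s_i)$ points of $\partial\hbs \cap c$ radially inward to a single prong singularity along the core of $U_i$; because $\prongs(s_i) \geq 3$, the resulting sectors inside $U_i$ all have non-positive index. Combining this with the no-monogon and lamination-carrying properties of $\hbs$ coming from the veering combinatorics developed in \Cref{sec_veering}, the extension yields an essential branched surface in $M$ in the sense of Gabai--Oertel, which forbids the existence of an essential $2$-sphere and completes the contradiction.

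The chief technical obstacle is verifying essentiality of the extended branched surface at the new prong sectors: one needs to check that the extension introduces no monogons, no disks of contact, and no sphere leaves in the carried lamination, and it is precisely here that the hypothesis $\prongs(s_i) \geq 3$ is indispensable (slopes with $0$, $1$, or $2$ prongs would create a nullgon, monogon, or bigon sector at the core of $U_i$, breaking essentiality). Alternatively, one could avoid invoking Gabai--Oertel by instead running the innermost-disk reduction one more level: pair the meridian disk $D$ with the complementary disk $E = S \setminus D$ and show that $E$ either lies in $U_i$ (giving $S \subset U_i$ bounding a ball) or exits $U_i$ and eventually produces a compressing disk for $\partial \mr M$ meeting $\partial U_i$ in a curve of $\geq 6$ intersections with the ladderpoles, contradicting incompressibility of $\partial \mr M$; the bookkeeping for this variant is the main difficulty.
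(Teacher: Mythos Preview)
Your innermost-disk reduction is fine and correctly isolates the only nontrivial case: some innermost disk $D\subset S$ is a meridian disk of a filling torus $U_i$. However, neither of your two proposed completions closes this gap.

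For the branched-surface route, extending $\hbs$ is the wrong choice. The complementary regions of $\hbs$ in $\mr M$ are the $\tau$-tetrahedra (balls with four bigon cusps on $\partial\mr M$), and after your coning construction the resulting complementary pieces are not of the cusped-solid-torus type required for essentiality; moreover $\hbs$ is cooriented, so anything it fully carries is a closed surface, not a lamination with the kind of holonomy needed for Gabai--Oertel essentiality. The branched surfaces you actually want are $B^s$ and $B^u$: because they are dual to $\tau$, they already live in all of $M$ with no extension needed, and their complementary regions are precisely the tubes around the cores of the $U_i$, each a solid torus with $\prongs(U_i)\ge 3$ cusps. Schleimer--Segerman \cite[\S6.5]{SchSeg19} observe that this makes $B^s$ and $B^u$ \emph{laminar} in Li's sense, so by \cite{Li02} $M$ carries an essential lamination and is irreducible by \cite{GO89}. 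That is exactly the paper's three-line proof, and it bypasses the innermost-disk argument entirely.

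Your alternative sketch also fails: once $D\subset U_i$ is a meridian disk, the complementary disk $E=S\setminus D$ need not lie in $\mr M$; it can pass through several $U_j$'s, and what you are left with after the full reduction is an essential planar surface in $\mr M$ all of whose boundary curves are meridians of the filling. Such surfaces genuinely exist in irreducible boundary-incompressible manifolds (reducible Dehn fillings of hyperbolic link complements give examples), so no purely elementary argument from irreducibility and $\partial$-incompressibility of $\mr M$ can finish---you must use the $\ge3$-prong hypothesis via the lamination machinery above.
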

\begin{proof}
As is pointed out in \cite[\S 6.5]{SchSeg19}, $B^s$ and $B^u$ are {laminar branched surfaces} in $M$ in the sense of Li. Hence \cite[Theorem 1]{Li02} gives that $M$ contains an {essential lamination} and hence is irreducible by \cite[Theorem 1]{GO89}.
\end{proof}

In this section we discuss a process called \emph{flattening} which takes an embedded surface and isotopes it so that $S\cap \mr M$ lies in a regular neighborhood of $\tau^{(2)}$ in a controlled fashion. This will be a useful way to keep track of surfaces as we move them around in $M$.

\subsection{The rod and plate neighborhood of $\tau^{(2)}$}\label{sec_rodplate}
\begin{figure}
\centering
\includegraphics[width=4in]{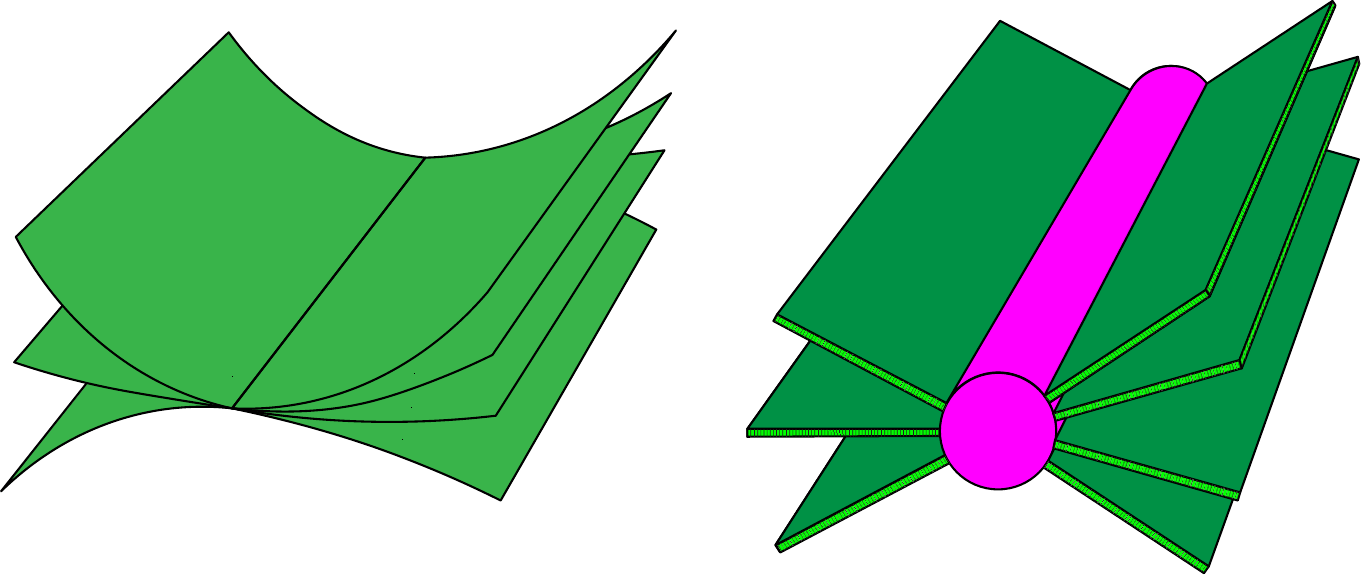}
\caption{Left: local picture around a $\tau$-edge. Right: the corresponding portion of $N_\epsilon$, decomposed into part of a rod (magenta) meeting various plates (green). 
}
\label{fig_nbhd}
\end{figure}

We first construct a regular neighborhood of $\tau^{(2)}$. For each $\tau$-edge $e$, let $R_e:= e\times D^2$ be the \textbf{rod corresponding to $e$} (where $D^2$ is a 2-dimensional disk). For a $\tau$-face $\hex$, let $P_\smhex:=\hex\times [0,1]$ be the \textbf{plate corresponding to} $\hex$. Then $P_{\smhex}$ has a natural foliation $\bigcup_{p\in \smhex}\{p\}\times [0,1]$ by line segments that we call the \textbf{vertical foliation} of $P_\smhex$. We orient the leaves of the vertical foliation so that, identifying $\hex$ with $\hex\times\{0\}$, the coorientation of $\hex\times\{0\}$ agrees with the orientation of the leaves.

We can glue the rods and plates along their boundaries in a way prescribed by the identifications of $\tau$-edges and boundaries of $\tau$-faces to form the \textbf{rod and plate neighborhood of $\tau$}, denoted $N_\epsilon$. This gives a local picture as in the righthand side of \Cref{fig_nbhd}. We view $N_\epsilon$ as embedded in $\mr M\subset M$, with $\tau^{(2)}$ lying in its interior so that $\tau^{(2)}$ intersects each leaf of the vertical foliation in each plate in exactly one point such that the vertical leaf orientations are compatible with the coorientation of $\tau^{(2)}$.  We call the union of the vertical foliations in each plate the \textbf{vertical foliation of $N_\epsilon$}. Note that the rods are not foliated by the vertical foliation. There is a homotopy equivalence 
\[
\coll \colon N_\epsilon\twoheadrightarrow \tau^{(2)}
\] 
called the \textbf{collapsing map} which is given by collapsing the disk factors of each rod and collapsing the vertical leaves in plates. This map restricted to $\tau^{(2)}$ is nearly the identity.

Let $\partial N_\epsilon=N_\epsilon\cap \partial \mr M$.  
There is a natural decomposition of $\partial N_\epsilon$ into \textbf{junctions} and \textbf{conduits}, which are respectively the components of intersections of rods and plates with $\partial \mr M$. The conduits inherit an oriented foliation from the vertical foliation, and the union of these foliations over all conduits is called the \textbf{vertical foliation of $\partial N_\epsilon$}. Similarly to $N_\epsilon$, we are leaving the junctions unfoliated.

Before beginning the flattening process, it is convenient to assume that each leaf of the vertical foliation that intersects $B^s$ is tangent to and contained in $B^s$. This can be achieved by a small isotopy.

\subsection{The flattening process}\label{sec_flatteningprocess}
Let $\Sigma$ be a sector of $\wt B^s$. Because the underlying topology of $\wt B^s$ is that of the dual 2-complex to $\wt\tau$, $\Sigma$ is a topological disk pierced by a single $\wt\tau$-edge $e$. Further, the intersection of $\wt\tau^{(2)}$ with $\closure(\Sigma)$ is a train track with a single switch at $e\cap \Sigma$ and one stop in each component of $\partial\Sigma-\{\text{triple points of $\wt B^s$}\}$. See \Cref{fig_sectorintersection}.

\begin{figure}[h]
\centering
\includegraphics[height=1.5in]{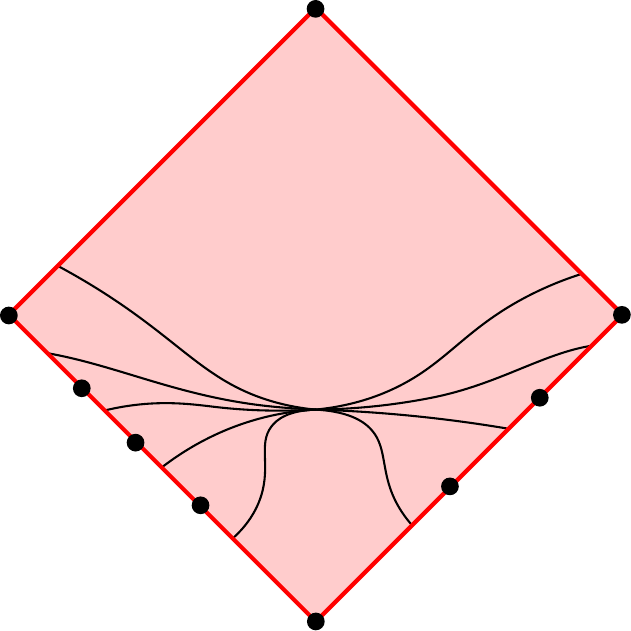}
 \caption{The intersection of $\tau^{(2)}$ with a single sector of $B^s$. While the straight lines in the boundary of the sector denote branch segments of $B^s$, we do not use this additional information.}
\label{fig_sectorintersection}
\end{figure}

This duality of complexes also implies that the complementary components of $B^s$ in $M$ are homeomorphic to open solid tori. Let $S$ be a taut surface. By \Cref{lem_simplepatches}, we can assume that $S\cap B^s$ has simple patches. Hence each patch is either a meridional disk, a nonmeridional disk, or a $\pi_1$-injective annulus with respect to the component of $M-B^s$ in which it resides.

\medskip

\textbf{Step 1.} We first perform an isotopy supported in a regular neighborhood of $B^s$ so that $S\cap B^s$ lies in $N_\epsilon\cap B^s$ transverse to the induced vertical foliation. This can be done as follows. Let $\Sigma$ be a sector of $B^s$, containing a single $B^s\cap \tau^{(2)}$-switch $v_\Sigma$. 
Since $S\cap B^s$ has simple patches, $S\cap\Sigma$ contains no circle components and is thus a collection of arcs with endpoints in $\partial \Sigma$. We can isotope  $S$ so that each of these arcs lies in $N_\epsilon\cap \Sigma$ transverse to the vertical foliation. This is shown in \Cref{fig_flattening12}. This isotopy can be performed consistently for all sectors of $B^s$.

\begin{figure}[h]
\centering
\includegraphics[width=4.5in]{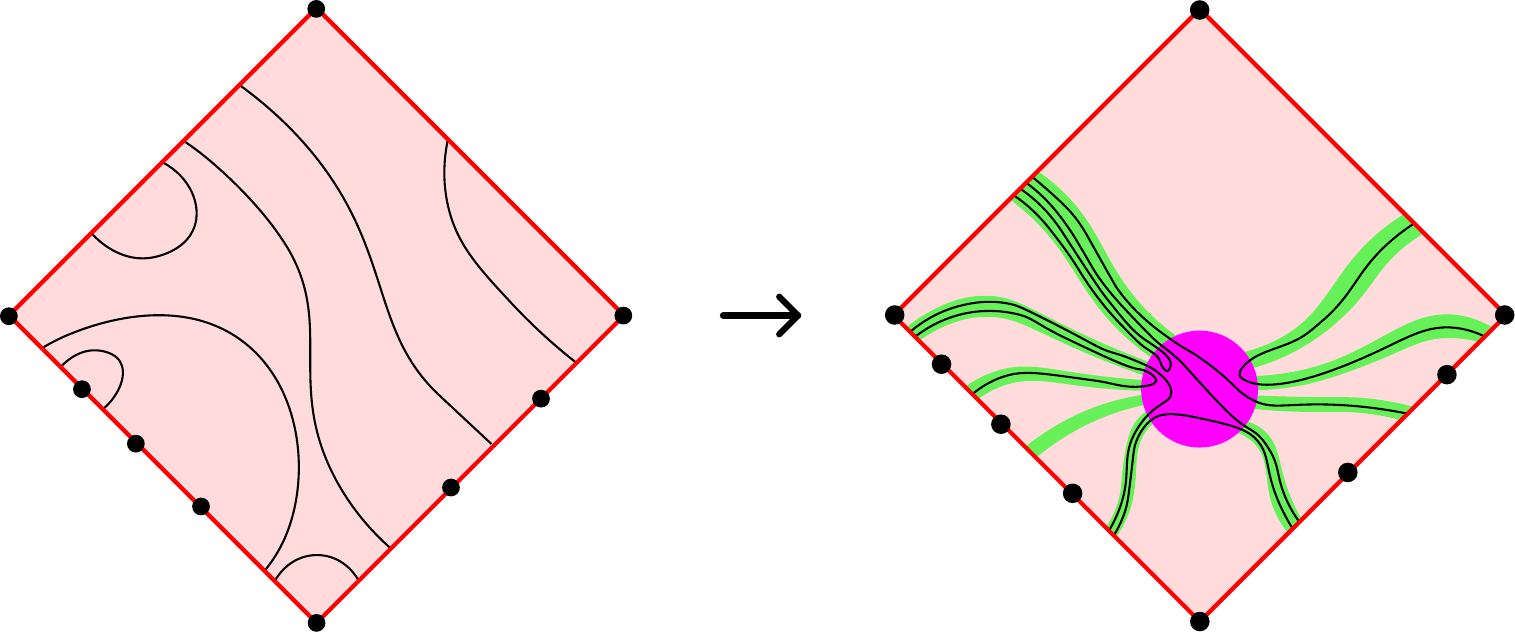}
 \caption{Step 1 of the flattening process in a single $B^s$-sector. We have not drawn the vertical foliation.}
\label{fig_flattening12}
\end{figure}

\textbf{Step 2.} After step 1, $S$ lies close to $\tau^{(2)}$ near $B^s$. Consider a complementary region $R$ of $B^s$; as mentioned before step 1, $R$ is homeomorphic to an open solid torus, and contains a single component $U_R$ of $U$. Let $T=\partial U_R$. Again by duality, we can identify $\tau^{(2)}\cap R$ with $(\partial\hbs\cap T)\times [0,1)$. Using this product structure, we can isotope  $S$ so that $S\cap R\cap \mr M$ lies in $N_\epsilon\cap T$ transverse to the vertical foliation. This can be visualized as ``combing" $S$ onto $\hbs$ and into $U_R$. After performing this operation in all complementary regions of $B^s$, the process is complete.

\begin{remark}
We have broken symmetry slightly by flattening with respect to $B^s$. We could also flatten with respect to $B^u$; the essential thing about $B^s$ that we use is its duality with $\tau$. 
\end{remark}

\subsection{After flattening}
Now we reckon with the aftermath of the flattening process. We call the resulting surface the \textbf{flattening} of $S$ and use the notation $S^\flat$. Any surface $F$ such that each component of $F\cap U$ $\pi_1$-injects into $U$ and $F\cap \mr M$ is embedded in $N_\epsilon$ transverse to the vertical foliation is called a \textbf{flattened surface}.  A connected component of $S^\flat\cap \{\text{plates of $N_\epsilon$}\}$ is called a \textbf{plate} of $S^\flat$. Similarly a connected component of $S^\flat\cap \{\text{rods of $N_\epsilon$}\}$ is called a \textbf{rod} of $S^\flat$. The total number of plates in $S^\flat$ is called the \textbf{area} of $S^\flat$.
Let 
$\mr S^\flat=S^\flat\cap \mr M$.

In each plate of $S^\flat$, the pairing of the coorientation of $S^\flat$ with the orientation of the vertical foliation is either entirely positive or negative, and in these respective cases we call the plate a \textbf{positive plate} or a \textbf{negative plate}. Each rod $r$ of $S^\flat$ is incident to two plates, and if both plates are positive (negative) we say $r$ is a \textbf{positive (negative) rod}. If these two plates have opposite sign we say $r$ is a \textbf{mixed rod}.

Let $\Neg(S^\flat)$ be the subsurface of $S^\flat$ obtained by taking the union of all negative plates and negative rods. Thus $S^\flat$ is carried by the partial branched surface $\tau^{(2)}$ exactly when $\Neg(S^\flat)=\varnothing$. A detailed analysis of $\Neg(S^\flat)$ for certain flattened surfaces will be central to our proof of \Cref{thm_mainthmveering}.

We now describe the effects of flattening in relation to $\partial\hbs$. 
\begin{itemize}

\item If $p$ was a meridional disk patch of $S$, then its image under flattening intersects $U$ in a meridional disk whose boundary lies in $\partial N_\epsilon$ transverse to the vertical foliation.

\item Similarly if $p$ was an annulus patch of $S$, then its image under flattening intersects $U$ in an annulus whose boundary lies in $\partial N_\epsilon$ transverse to the vertical foliation, and whose core is homotopically nontrivial in $U$.

\item Finally, if $p$ was a nonmeridional disk patch, then its image under flattening intersects $U$ in a nonmeridional disk. The boundary of this nonmeridional disk is a component of $\partial \mr S^\flat$ lying in $\partial N_\epsilon$ transverse to the vertical foliation, bounding a disk
$\delta$ in $\partial U$.
We will call $\delta$ a \textbf{$\boldsymbol{\flat}$-disk} of $S^\flat$. A $\flat$-disk $\delta$ is \textbf{inward} (\textbf{outward}) if the coorientation of $S^\flat$ points into (out of) $\delta$ along $\partial \delta$.
A $\flat$-disk is \textbf{innermost} if it contains no other $\flat$-disks in its interior. 
\end{itemize}

Let $\delta$ be a $\flat$-disk of $S^\flat$.
The \textbf{volume} of $\delta$ is the number of components of $\partial \mr M-\partial N_\epsilon$ contained in $\delta$. 
The \textbf{circumference} of $\delta$ is the length of the image of $\delta$ under the collapsing map $\coll|_{\partial\mr M}\colon\partial N_\epsilon\to \partial\hbs$, with respect to a metric assigning length 1 to each $\partial\hbs$-branch. 
Let $T_\delta$ be the component of $\partial\mr M$ containing $\delta$. Let $\wt\delta$ be a lift of $\delta$ to the universal cover $\wt T_\delta$ of $T_\delta$. The \textbf{width} of $\delta$ is the number of ladders $\wt L$ in $\wt T_\delta$ with the property that the image of $\partial\wt\delta$ under collapsing intersects both ladderpoles of $\wt L$.

\begin{figure}
\centering
\includegraphics[height=1.75in]{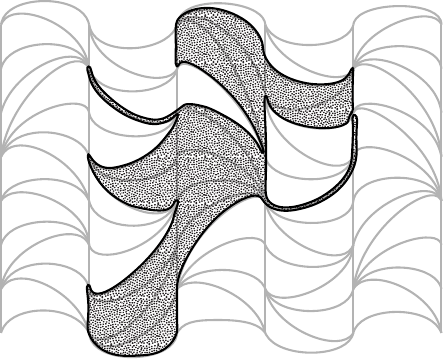}
\caption{An example of a $\flat$-disk with volume 16, width 3, and circumference 22. }
\label{fig_deldisk}
\end{figure}

The boundary of $\delta$ decomposes into cooriented line segments contained alternately in junctions and conduits of $\partial N_\epsilon$, which we call \textbf{junction segments} and \textbf{conduit segments} of $\partial\delta$, respectively. If there is no chance of confusion with junctions or conduits of $\partial N_\epsilon$ we will sometimes refer to a junction segment or conduit segment of $\partial\delta$ as simply a \textbf{junction} or \textbf{conduit} of $\delta$. A conduit segment is \textbf{positive} or \textbf{negative} if its coorientation respectively agrees or disagrees with the orientation of the vertical foliation. It is called a \textbf{ladderpole} or \textbf{rung conduit segment} if it intersects a ladderpole or rung of $\partial\hbs$, respectively. A junction segment is \textbf{positive (negative)} if connects two positive (negative) conduit segments, and \textbf{mixed} otherwise.

Usually when drawing a $\flat$-disk $\delta$ we omit drawing the junctions and conduits of $\partial N_\epsilon$, simply drawing $\partial \delta$ close to $\partial\hbs$ in such a way that it is clear which junctions and conduits $\partial \delta$ traverses. See \Cref{fig_deldisk}.

A \textbf{cusp} of $\delta$ is a 
mixed junction segment which is a ``convex" part of $\partial \delta$. See \Cref{fig_cusp}. To make this notion of convexity precise, let $c_1$ and $c_2$ be conduit segments of opposite sign connected by a junction segment $j$ contained in a junction $J$. Then near $J$, one of the $c_i$ is higher with respect to the coorientation of $\partial\hbs$; suppose that $c_1$ is higher. If the orientation of the vertical foliation points out of $\delta$ along $c_1$, then $j$ is a cusp. Let $B_J$ be a small open neighborhood of $J$. Let $B_J(j)$ be the component of $\delta\cap B_J$ whose boundary in $B_J$ is $B_J\cap(c_1\cup c_2\cup j)$. The \textbf{size} of $j$ is the number of connected components of $(\partial \mr M-\partial N_\epsilon)\cap B_J(j)$.

We can use the coorientation of $\partial\hbs$ to totally order all the conduits of $S^\flat$ meeting $J$ on the same side as $c_1$ and $c_2$. If there are no conduits of $\partial \mr S^\flat$ lying between $c_1$ and $c_2$ with respect to this order, we say the cusp $j$ is \textbf{free}.

\begin{figure}
\centering
\includegraphics[height=1.5in]{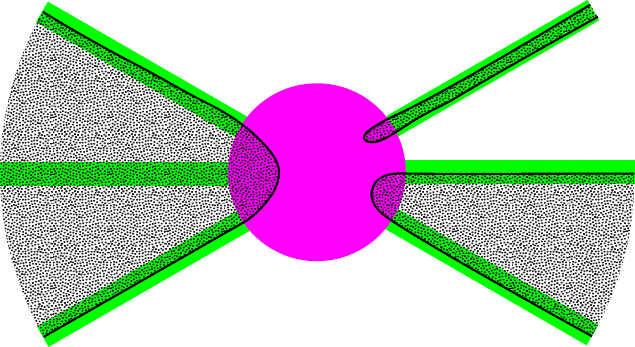}
\caption{We have drawn a junction (magenta) and six conduits (green) of $\partial N_\epsilon$. There are three cusps shown, all of which are free. On the left is a cusp of size 2. The top right cusp has size 0 and the bottom right cusp has size 1. The shading indicates the $\flat$-disk in whose boundary each cusp lies.}
\label{fig_cusp}
\end{figure}

\subsection{Flat isotopy}\label{sec_flatisotopy}

Similar to our drawings of $\partial M$, when drawing a flattened surface we omit drawing actual plates and rods of $N_\epsilon$ and simply draw the surface near $\tau^{(2)}$ in such a way that it is clear where $S^\flat$ lies in $N_\epsilon$.

Let $S^\flat$ be a flattened surface, whose identity will change by isotopies. We will now describe a series of possible isotopies we can perform on $S^\flat$. Our goal, ultimately, is to perform sequences of these moves to minimize the area of $S^\flat$. We include pictures of these to aid the reader, but to reduce clutter we only draw the parts of the isotopy that take place in $\mr M$.
\begin{figure}
\centering
\includegraphics[height=1.25in]{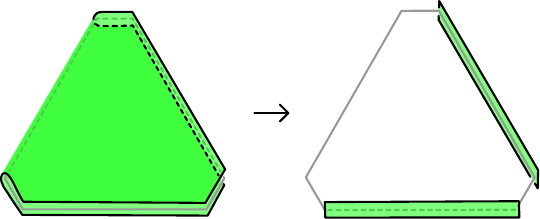}
\caption{Face move: a size 0 cusp of an innermost $\flat$-disk for $S^\flat$ corresponds to a portion of $S^\flat$ running back and forth over a single $\tau$-face. This can be eliminated by an isotopy of $S^\flat$.}
\label{fig_facemove}
\end{figure}

\medskip

\textbf{Face move.} Whenever $\partial \mr S^\flat$ has two conduit segments $c_1$ and $c_2$ connected by a junction segment $j$ such that $c_1$ is positive, $c_2$ is negative, and both $c_i$ lie in the same conduit, we say that the concatenation $c_1*j*c_2$ is \textbf{backtracking}. This corresponds to a portion of $S^\flat$ running across a plate corresponding to a $\tau$-face $\hex$, into a rod and back across the same plate. If there are no other conduit segments between $c_1$ and $c_2$, then this portion of $S^\flat$ can be eliminated by an isotopy of $S^\flat$ as in \Cref{fig_facemove}. This induces an isotopy of $\partial \mr S^\flat$ at two tips of the corresponding face $\hex$, and a cut and paste surgery of $\partial \mr S^\flat$ at the third tip of $\hex$. 
Let $U_1$ be the component of $U$ corresponding to this third tip. The cut and paste surgery is a result of pushing a small piece of $S^\flat$ which used to lie in $\mr M$ into $U_1$. If this creates a component of $S^\flat\cap U_1$ which is not $\pi_1$-injective in $U$, we can use the irreducibility of $M$ and incompressibility of $S^\flat$ to correct this via an isotopy. The initial isotopy combined with the second (if necessary) is called a \textbf{face move}. We see that whenever $\partial \mr S^\flat$ has backtracking, the area of $S^\flat$ can be reduced by at least 2 by a face move.

\medskip

\textbf{Disk removals.} Let $\delta$ be an innermost $\flat$-disk. If $\delta$ has volume 1 and circumference 3 or volume 0 and circumference 2, then $\delta$ can be eliminated by an isotopy of $S^\flat$ so that $S^\flat\cap \mr M$ still lies in $N_\epsilon$ transverse to the vertical foliation. Each of these decreases the area of $S^\flat$ by 2. See \Cref{fig_removal}.

\medskip

\begin{figure}
\centering
\includegraphics[height=2.5in]{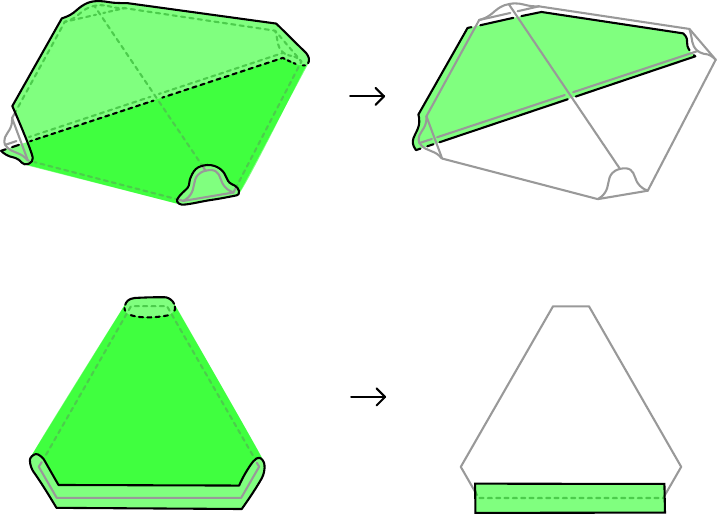}
\caption{Top: disk removal of a $\flat$-disk with volume 1 and circumference 3. Bottom: disk removal of a $\flat$-disk with volume 0 and circumference 2.}
\label{fig_removal}
\end{figure}

\textbf{Tetrahedron move.} Let $\delta$ be a $\flat$-disk, not necessarily innermost.
If $c$ is a free cusp of $\delta$ with size 1, then there is an isotopy of $S^\flat$ as shown in \Cref{fig_tetmove} which sweeps a portion of $S^\flat$ across a $\tau$-tetrahedron. This move decreases the volume of $\delta$, and if $\delta$ is inward (outward) it does not increase the volume of any inward (outward) $\flat$-disk.

\begin{figure}
\centering
\includegraphics[height=1.25in]{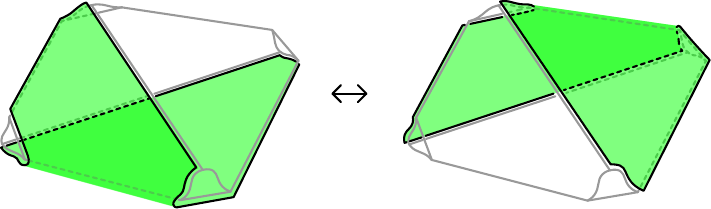}
\caption{Tetrahedron move.}
\label{fig_tetmove}
\end{figure}

\begin{figure}
\centering
\includegraphics[height=1.25in]{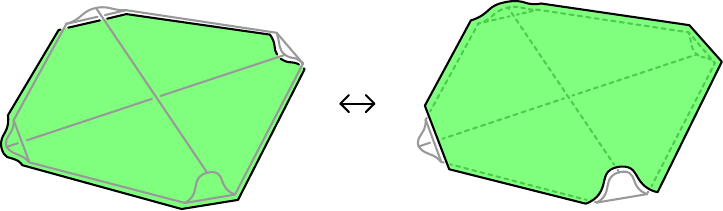}
\caption{Flip move.}
\label{fig_flip}
\end{figure}

\medskip

\textbf{Flip move.} If $S^\flat$ has two adjacent plate regions $P$ and $Q$ corresponding to the bottom of a single $\tau$-tetrahedron $\tet$, and $P$ and $Q$ are topmost in their respective plates of $N_\epsilon$, then there is an isotopy of $S^\flat$ which sweeps a portion of $S$ up through $\tet$ and replaces $P$ and $Q$ by two bottommost plate regions $P'$ and $Q'$ in the plates corresponding to the top of $\tet$. This is called an \textbf{upward flip move}. The inverse of an upward flip is called a \textbf{downward flip move}.

\medskip

\begin{figure}
\centering
\includegraphics[height=2in]{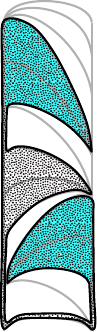}
\caption{A $\flat$-disk $\delta$ with no backtracking decomposes into fat regions connected by thin regions, giving a finite tree. If $\delta$ has width one, there is an available flip move or tetrahedron move in each region corresponding to a vertex of valence 1 in this tree. In the picture above, the tree is a path with three vertices, and in each region corresponding to a valence 1 vertex (colored cyan) there is an available downward flip move which shrinks the $\flat$-disk.}
\label{fig_widthonebad}
\end{figure}

\textbf{Width one move.} If $\delta$ is an innermost $\flat$-disk of width 1, then the area of $S^\flat$ can be reduced by a combination of face moves, tetrahedron moves, flip moves, and disk removals applied to $\del\delta$. If $\delta$ has backtracking, this is clear: we simply apply a face move.
Otherwise $\delta$ has no backtracking. In this case, $\delta$ naturally decomposes into a finite number of \emph{fat} regions with positive volume separated by \emph{thin} regions entirely contained in $\del N_\epsilon$. For an example, see \Cref{fig_widthonebad}. We can define a graph $G_\delta$ whose vertices correspond to the fat regions and whose edges correspond to the thin regions. Since $\delta$ is a disk, the graph $G_\delta$ is a tree. 

Suppose that $\delta$ lives in an upward ladder (the downward case is symmetric). If $G_\delta$ consists of a single vertex, then $\delta$ has a free cusp of size one at the lowest flat triangle in $\delta$ so the area of $\delta$ may be reduced by a tetrahedron move. In addition, $\del\delta$ runs through both top conduits of the highest flat triangle in $\delta$, and therefore a downward flip move will also reduce the volume of $\delta$.
If $G_\delta$ is a more complicated tree, let $F$ be a fat region corresponding to a $G_\delta$-vertex of degree 1. Then, depending on where $F$ meets a thin region, there is either an available downward flip move at the top flat triangle in $F$ or a tetrahedron move at the bottom flat triangle in $F$. The move reduces the volume of $\delta$. Since $\delta$ has finite volume, finitely many moves suffice to either introduce backtracking or reduce the volume of $\delta$ to 1. At this point a face move or disk removal reduces the area of $S^\flat$.
A \textbf{width one move} is defined to be any such area-reducing sequence of moves applied to a width one $\flat$-disk.

\begin{figure}
\centering
\includegraphics[width=5in]{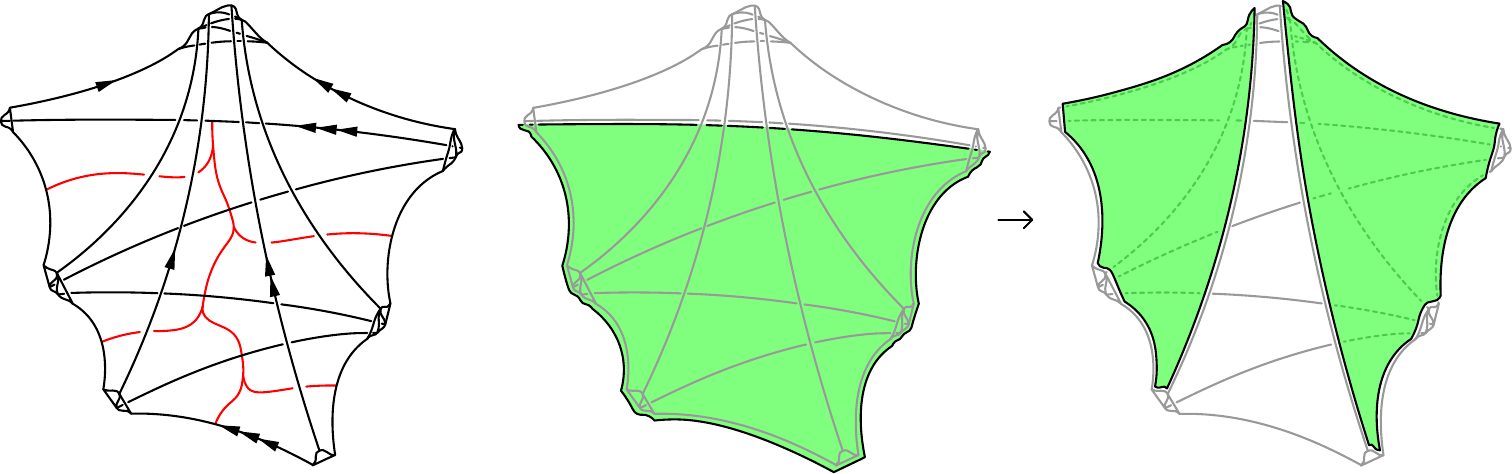}
\caption{\emph{An example of a stable annulus move}. Left: a portion of a veering triangulation. Some edge identifications are specified, but we make no assumptions about the others. The portion of $B^s\cap \tau^{(2)}$ shown carries a shallow stable loop. Center: An example of an annulus satisfying the conditions for a stable annulus move. Right: the situation after the stable annulus move. The core of the annulus has been pushed into the component of $U$ corresponding to the topmost (in the page) tips of the four $\tau$-tetrahedra.}
\label{fig_annulusmove}
\end{figure}

\medskip

\textbf{Annulus move.} Our last move is different than the previous ones in that its nature is more global than local. Suppose there is an annulus $A\subset S^\flat$ with the following properties: 
\begin{enumerate}[label=(\roman*)]
\item $A$ is a union of rods and plates,
\item each plate of $A$ is topmost in its $N_\epsilon$-plate, and
\item the induced stable train track on $A$ carries a curve whose image under collapsing is a shallow stable loop $\gamma$. 
\end{enumerate}
Then we can perform a \textbf{stable annulus move}, which we now describe. 

Let $\wt\gamma$ be a lift of $\gamma$ to $\wt M^\circ$. Since $\gamma$ is a shallow stable loop, \Cref{lem_shallowloops} gives a ladder $L\subset \partial \mr M$ and a lift $\wt L$ of $L$ to $\wt M^\circ$ such that, if $S(\wt L)$ is the union of tetrahedra intersecting $\intr(\wt L)$, the faces traversed by $\wt\gamma$ are exactly the faces of $S(\wt L)$ which do not meet $L$. The image $T$ of $S(\wt L)$ under the covering map is a possibly immersed solid torus in $\mr M$. Its interior is embedded since otherwise the annulus $A$ would necessarily intersect itself, contradicting that $S^\flat$ is embedded.

It follows that there exists an isotopy of $S^\flat$ which sweeps $A$ up through $T$, pushing a core subannulus of $A$ through the ladder $L$ into the component of $U$ corresponding to $L$, so that after this isotopy the remainder of $A$ lies in the plates and rods corresponding to the two ladderpoles of $L$. See \Cref{fig_annulusmove}. Note that the new plates of $S^\flat$ after the annulus move will have the same sign as the plates of $A$.

An \textbf{unstable annulus move} is defined symmetrically. For this move we require each plate region of the annulus $A$ to be bottommost in its plate, and we require that the induced unstable train track carry a curve which collapses to a shallow unstable loop.

\section{$\C_\tau^\vee\subset \cone(\sigma_\tau)$}\label{sec_easycontainment}

Recall that $\Gamma$ denotes the dual graph of $\tau$, which is the 1-skeleton of the 2-complex dual to $\tau$.
Let $\C_\tau\subset H_1(M)$, the \textbf{cone of homology directions} of $\tau$, be the smallest convex cone in $H_1(M)$ containing the homology classes of oriented cycles in $\Gamma$. Associated to $\C_\tau$ is a dual cone in $H_2(M)$ defined by 
\[
\C_\tau^\vee=\{\alpha\in H_2(M)\mid \langle \alpha, \gamma\rangle \ge 0 \text{  for all  } \gamma\in \C_\tau\}.
\]

Let $U_i$ be a component of $U$ and let $\prongs(U_i)$ be the number of prongs of the meridional slope of $U_i$. The component $R_i$ of $M-B^s$ containing $U_i$ is homeomorphic to an open solid torus. \Cref{lem_tipcondition} gives us that its closure $C_i$ in the path topology has the smooth structure of the mapping torus of some rotation of a $(\prongs(U_i))$-gon patch of a train track. We call $R$ a \textbf{tube} of $B^s$. Similarly $U_i$ determines a complementary region of $B^u$ with the same type of structure, which we call a \textbf{tube} of $B^u$.
The \textbf{index} of $U_i$ is the number
\[
\ind(U_i)=2-\prongs(U_i),
\]
which is equal to the minimal index of a meridional disk of $C_i$. Since $\prongs(U_i)\ge 3$, we have $\ind(U_i)\le -1$.

Using this language, if $S\subset M$ is a surface, then a patch of $S\cap B^{s/u}$ is simple if it $\pi_1$-injects into its tube.

\begin{lemma}[no nullgons or monogons]\label{lem_nonullgons}
Let $S$ be an incompressible surface in $M$. Then $S$ is isotopic to a flattened surface $S^\flat$ such that all patches of both $S^\flat \cap B^s$ and $S^\flat \cap B^u$ are simple and have nonpositive index.
\end{lemma}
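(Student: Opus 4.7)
The plan is to first arrange, by an isotopy of $S$ in $M$, that both $S\cap B^s$ and $S\cap B^u$ have simple patches and no positive-index patches, and then apply the flattening process of Section~\ref{sec_flatteningprocess} to produce $S^\flat$. For simple patches I would apply \Cref{lem_simplepatches} alternately to $B^s$ and $B^u$; each application strictly decreases the total number of patches, so the alternation terminates. Since $S$ is closed, patches have no corners, and the index formula then forces any positive-index patch to be a disk with at most one switch, that is, a nullgon or a monogon.

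The key geometric input for eliminating these is that every sector of $B^s$ and of $B^u$ is a topological disk pierced by a single $\tau$-edge, as recorded in Section~\ref{sec:BSstruc} and reused at the start of Section~\ref{sec_flatteningprocess}. Let $P$ be a positive-index patch of $S\cap B^s$. If $P$ is a nullgon, then $\partial P$ is disjoint from $\brloc(B^s)$, lies in a single sector $\Sigma$, and bounds a subdisk $D\subset \Sigma$. If $P$ is a monogon, then $\partial P$ is a single branch of $S\cap B^s$ that closes up into a loop at a switch $p$; this branch maps into a single sector $\Sigma$ of $B^s$ in $M$, because exiting $\Sigma$ would require crossing $\brloc(B^s)$ and would thereby contribute a second switch to $\partial P$. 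So $\partial P$ is an embedded loop in the disk $\Sigma$ based at $p\in\partial\Sigma$, which bounds a subdisk $D\subset\Sigma$. In either case $P\cup D$ is an embedded $2$-sphere, which by \Cref{lem_irreducible} bounds a ball $\beta\subset M$; an innermost-subdisk argument inside $\beta$ (to handle other components of $S$ meeting the interior of $D$) yields an isotopy of $S$ strictly reducing $|S\cap\brloc(B^s)|$. The same argument eliminates positive-index patches in $S\cap B^u$, and alternating the two reductions while strictly decreasing $|S\cap\brloc(B^s)|+|S\cap\brloc(B^u)|$ exhausts both conditions.

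Finally I would apply the flattening process of Section~\ref{sec_flatteningprocess} to the resulting $S$ to obtain $S^\flat$. The flattening is an ambient isotopy supported near $B^s$ and in the complementary regions of $B^s$, so the combinatorial type of $S\cap B^s$ passes through unchanged; if $S^\flat\cap B^u$ acquires any new nullgons or monogons, I would reapply the sector-disk reduction (symmetrically between $B^s$ and $B^u$) and re-flatten, terminating by the same complexity argument. I expect the monogon case to be the main obstacle: verifying that a single-switch boundary must lie in one sector relies jointly on the trivalent structure of the branch locus and on the fact that any transverse crossing of the branch locus by $S$ creates a switch of the induced train track on $S$, and one must also take care that the innermost-subdisk cleanups do not interfere with the simple-patches condition already established.
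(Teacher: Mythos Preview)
Your approach differs substantially from the paper's, and the termination arguments have real gaps. The paper does \emph{not} eliminate nullgons and monogons before flattening. Instead it flattens immediately (after arranging simple patches for $B^s$), then applies face moves and width-one moves until $S^\flat$ has no $\flat$-disks of width $\le 1$. The point is that once every $\flat$-disk has width $\ge 2$, its boundary contains at least two rung conduit segments crossing an upward ladder and at least two crossing a downward ladder; by \Cref{lem_tipcondition} this forces the corresponding nonmeridional disk patch to have $\ge 2$ switches for \emph{both} $t^s$ and $t^u$ simultaneously. Meridional disk patches have index $\le -1$ by the $\ge 3$ prongs hypothesis, and topological annulus patches trivially have index $\le 0$. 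So the single combinatorial condition ``width $\ge 2$'' controls both branched surfaces at once, with area as the decreasing complexity.

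Your plan, by contrast, has two termination problems. First, eliminating a nullgon does not decrease $|S\cap\brloc(B^s)|$, since the nullgon boundary never touches the branch locus; your stated complexity is simply unchanged. Second, and more seriously, when you push $S$ across the ball $\beta$ bounded by $P\cup D$ to kill a $B^s$-monogon, the ball $\beta$ may well meet $B^u$ (the relative position of $B^s$ and $B^u$ is not specified, and sectors of $B^s$ do intersect $B^u$), so $|S\cap\brloc(B^u)|$ can increase. The same issue recurs in your final paragraph: flattening is an isotopy with respect to $B^s$, and you give no reason why the subsequent cleanup-and-re-flatten loop terminates. The paper sidesteps all of this by working entirely in the flattened setting, where the $\flat$-disk width governs switches of $t^s$ and $t^u$ at the same time and area is monotone under the allowed moves.
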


\begin{proof}
We first isotope  $S$ so that it has simple patches, which is possible by \Cref{lem_simplepatches}. Then we flatten $S$ to a surface $S^\flat$. Let $t^s$ and $t^u$ denote $S^\flat\cap B^s$ and $S^\flat\cap B^u$. Any meridional disk patch of $t^s$ or $t^u$ has index $\le -1$ because each component of $U$ has index $\le -1$.
By applying face moves and width one moves to decrease the area of $S^\flat$,
we can isotope  $S^\flat$ so that it has no $\flat$-disks of width one or zero. 
Suppose that $p$ is a nonmeridional disk patch of $t^s$. If $\delta$ has width $\ge 2$, $\partial\delta$ must contain at least two rung segments crossing an upward ladder, so by \Cref{lem_tipcondition} $p$ must have at least 2 switches. Symmetric reasoning shows the same holds for patches of $t^u$.

In summary, every disk patch has nonpositive index. Since topological annulus patches must have nonpositive index, the proof is complete.
\end{proof}

\begin{lemma}[$M$ is atoroidal]\label{lem_atoroidal}
There is no incompressible torus embedded in $M$.
\end{lemma}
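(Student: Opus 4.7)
My plan is to suppose for contradiction that $T\subset M$ is an incompressible torus, and extract enough combinatorial information from $T\cap B^s$ to reach a contradiction. I first apply \Cref{lem_nonullgons} to isotope $T$ to a flattened surface $T^\flat$ so that every patch of $t^s := T^\flat\cap B^s$ has nonpositive index. Since $\chi(T)=0$, the summation formula \eqref{indexeq} forces every such patch to have index exactly $0$.

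Next I revisit the case analysis from the proof of \Cref{lem_nonullgons}. Meridional disk patches have index $\leq -1$ under the $\geq 3$-prongs hypothesis, and nonmeridional disk patches of width $\geq 2$ have at least two switches, so the only simple patches of index $0$ are nonmeridional disk bigons (with exactly two switches) and $\pi_1$-injective annuli with no switches at all, which I call \emph{true annuli}. I then run a cell-structure count on $T^\flat$ induced by $t^s$. Let $V$, $E'$, $L$, $F_B$, $F_A$ denote the number of switches, non-loop branches, loop components of $t^s$, bigon patches, and true annulus patches, respectively. Trivalence of switches gives $3V=2E'$, while each bigon has exactly two non-loop branches on its boundary and each true annulus has exactly two loop branches on its boundary, so $F_B=E'$ and $F_A=L$. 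Hence
\[
0 = \chi(T) = V - (E' + L) + (F_B + F_A) = V,
\]
so there are no switches, no bigons, and $t^s$ is a disjoint union of simple closed loops whose complement in $T^\flat$ consists entirely of true annulus patches.

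Finally I rule out this residual case. If $t^s = \varnothing$, then $T^\flat$ lies in a single complementary region of $B^s$; these are open solid tori (the tubes), and $\pi_1$ considerations rule out incompressible tori inside a solid torus. Otherwise each loop of $t^s$ is disjoint from $\Gamma$ and so lies in the interior of a single sector of $B^s$, which is a topological disk. Choose an innermost such loop $c$ in $B^s$ so that the disk $D\subset B^s$ it bounds satisfies $D\cap T^\flat = c$. The true-annulus partition of $T^\flat$ forces every loop of $t^s$ to be essential on $T^\flat$ (otherwise some complementary patch would be a disk rather than an annulus), so $D$ is a compressing disk for $T^\flat$, contradicting incompressibility.

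I expect the main delicacy to be the patch classification: I must check carefully that after the flattening of \Cref{lem_nonullgons}, every index-$0$ patch really is either a nonmeridional bigon or a true annulus, with no more exotic configurations (for example, simple annular patches carrying one or more switches, or disk patches of larger width that happen to have only two switches). This is exactly where the $\geq 3$-prongs hypothesis (which makes every meridional disk have index $\leq -1$) and the irreducibility of $M$ (\Cref{lem_irreducible}), which legitimizes the innermost-disk reductions in the final step, both play a decisive role.
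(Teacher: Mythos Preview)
Your argument is correct and takes a genuinely different route from the paper. After reaching the conclusion that every patch of $T^\flat\cap B^s$ has index zero, the paper simply observes that no patch can then be a meridional disk (those have index $\le -1$), isotopes $T^\flat$ entirely into $\mr M$, and invokes the external fact that $\intr(\mr M)$ is atoroidal (via the strict angle structure of \cite{HRST11} and Casson's argument exposited in \cite{FutGue11}) to force $T^\flat$ to be peripheral or compressible in $\mr M$, hence compressible in $M$. You instead stay inside $M$ and exploit $B^s$ directly: a vertex--edge--face count shows $t^s$ has no switches, so its components are circles lying in individual sectors of $B^s$; since those sectors are disks (as used in \Cref{sec_flatteningprocess}), an innermost such circle bounds a compressing disk, while if $t^s=\varnothing$ then $T^\flat$ sits in a tube, an open solid torus, where $\pi_1$ considerations rule out an incompressible torus. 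Your approach is more self-contained---it avoids the appeal to angle structures and the atoroidality of $\mr M$---at the cost of a slightly longer combinatorial argument; the paper's route is shorter precisely because it outsources the work to those established results.

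One small caution: your displayed identity $\chi(T)=V-(E'+L)+(F_B+F_A)$ is not literally a CW computation, since annulus patches are not $2$-cells. The clean justification is that bigon patches and true-annulus patches cannot share boundary (the former are bounded by branches with switch endpoints, the latter only by switchless loops), so on the connected torus $T$ all patches are of a single type; in the all-bigon case the honest CW count $0=\chi(T)=V-E'+F_B$ combined with $3V=2E'$ and $F_B=E'$ gives $V=0$, forcing the all-annulus case. Your formula happens to yield the same answer because $F_A=L$ cancels, so the conclusion stands, but the reasoning as written glosses over this. The patch classification you flagged as the ``main delicacy'' is in fact routine: a simple annulus patch has index $-\#\{\text{switches}\}$, so index zero forces no switches.
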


\begin{proof}
Suppose for a contradiction that $T$ is an incompressible torus embedded in $M$. By \Cref{lem_nonullgons}, $T$ can be flattened to $T^\flat$  such that $T^\flat \cap B^s$ has no patches of positive index. Since $\chi(T)=0$, all patches of $T^\flat \cap B^s$ must have index 0. Therefore, since the index of each tube is at least $-1$, $T^\flat$ must not intersect any tube in a meridional disk. As such we may isotope  $T^\flat$ so that it lies in $\mr M$.
By \cite[Theorem 1.5]{HRST11} $\tau$ admits a strict angle structure and by a result of Casson exposited in \cite[Theorem 1.1]{FutGue11}, $\intr(\mr M)$ is irreducible and atoroidal. In particular $T^\flat$ must be either peripheral in $\mr M$, in which case $T$ is compressible in $M$, a contradiction; or compressible in $\mr M$, in which case $T$ is also compressible in $M$. \end{proof}

It follows that we can speak of the Thurston norm $x$ and the dual norm $x^*$ as norms on $H_2(M)$ and $H_1(M)$, respectively. The \textbf{Euler class of $\boldsymbol\tau$} is the homology class in $H_1(M)$ defined by
\[
e_\tau=\frac{1}{2}\cdot\sum \ind(U_i)\cdot [\core(U_i)],
\]
where the sum is over components of $U$ and $\core(U_i)$ denotes the core curve of $U_i$, oriented so that a curve in $\partial U_i$ lying in a single ladder positively transverse to its rungs is homotopic to a positive multiple of $\core(U_i)$ . We define
\[
E=\{\alpha\in H_2(M)\mid \langle -e_\tau, \alpha \rangle=x(\alpha)\}.
\]
By definition, if $E$ is strictly larger than $\{0\}$ then it is a cone over some face of $B_x(M)$. We denote this face by $\sigma_\tau$. If $E=\{0\}$ we let $\sigma_\tau$ be the empty face of $B_x(M)$.

\begin{lemma}\label{lem_dualnorm}
The Euler class of $\tau$ has dual Thurston norm at most 1:
\[
x^*(e_\tau)\le 1.
\]
\end{lemma}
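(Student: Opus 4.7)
The plan is to show that for every taut integral class $[S]\in H_2(M)$, the pairing $|\langle [S], e_\tau\rangle|$ is at most $-\chi(S) = x([S])$. Since $x^*(e_\tau) = \sup\{|\langle \alpha, e_\tau\rangle| : x(\alpha)\le 1\}$ (using that $\alpha$ and $-\alpha$ both have the same norm), this will give $x^*(e_\tau)\le 1$. So fix an embedded taut surface $S$ and flatten it to $S^\flat$ using \Cref{lem_nonullgons}, so that every patch of $S^\flat\cap B^s$ is simple with nonpositive index.

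Now extract the topological information from the flattened surface. For each solid torus component $U_i$ of $U$, let $n_i^+$ and $n_i^-$ denote the numbers of positive and negative meridional disk patches of $S^\flat\cap U_i$, where sign is determined by whether the coorientation of $S$ agrees with $[\core(U_i)]$. Then $\langle [S], [\core(U_i)]\rangle = n_i^+ - n_i^-$, so $2\langle [S], e_\tau\rangle = \sum_i \ind(U_i)\,(n_i^+ - n_i^-)$. The crucial combinatorial input is that every meridional disk patch $p$ of $S^\flat\cap B^s$ in $U_i$ has at least $\prongs(U_i)$ switches, hence
\[
\ind(p)\;\le\; 2-\prongs(U_i)\;=\;\ind(U_i)\;\le\; -1.
\]
This comes from the fact that $C_i$ has the smooth structure of the mapping torus of a rotation of a $\prongs(U_i)$-gon patch, so any meridional loop in $\partial C_i$ must cross at least $\prongs(U_i)$ switches of the induced train track.

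Combining this with \Cref{lem_nonullgons} (all patches of $S^\flat \cap B^s$ have nonpositive index), the Euler characteristic formula \eqref{indexeq} gives
\[
2\chi(S)\;=\;\sum_{p\text{ patch}}\ind(p)\;\le\;\sum_{i}\sum_{p\in M_i}\ind(p)\;\le\;\sum_{i}(n_i^++n_i^-)\,\ind(U_i),
\]
where $M_i$ denotes the meridional disk patches in $U_i$, and the second inequality is reversed-sign because $\ind(U_i)<0$. Since $\ind(U_i)$ is negative and $n_i^++n_i^-\ge |n_i^+-n_i^-|$, we conclude
\[
-2\chi(S)\;\ge\;\sum_i(n_i^++n_i^-)|\ind(U_i)|\;\ge\;\Bigl|\sum_i (n_i^+-n_i^-)\,\ind(U_i)\Bigr|\;=\;2\,|\langle [S],e_\tau\rangle|.
\]
Dividing by $2$ gives $|\langle [S], e_\tau\rangle|\le -\chi(S)=x([S])$, and since this holds for every taut $S$, we are done.

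The main technical point requiring care is the combinatorial claim that a meridional disk patch in $U_i$ has at least $\prongs(U_i)$ switches. This is essentially the statement that a minimal-complexity meridional disk of the tube $C_i$ realizes the index $\ind(U_i)$, which follows from unpacking the structure of $B^s$ near its tubes via \Cref{lem_tipcondition}; the prongs condition $\prongs(U_i)\ge 3$ from the standing hypotheses of \Cref{sec_flatprelim} then ensures $\ind(U_i)\le -1$ so that the sign arithmetic above goes through.
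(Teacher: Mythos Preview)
Your proof is correct and follows essentially the same approach as the paper's: flatten $S$ so that all patches of $S^\flat\cap B^s$ have nonpositive index, use that each meridional disk patch in tube $U_i$ has index at most $\ind(U_i)$, and compare the index sum \eqref{indexeq} to the pairing with $e_\tau$. The only cosmetic difference is that the paper proves the one-sided inequality $\langle e_\tau,[S]\rangle\ge\chi(S)$ (and tacitly applies it to $-S$ for the other side), whereas you package the two directions together via $n_i^++n_i^-\ge|n_i^+-n_i^-|$.
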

\begin{proof}
It suffices to show that for any taut surface $S$ we have $|\langle e_\tau,[S]\rangle |\le \chi_-(S)$ or alternatively that $\langle e_\tau,[S]\rangle\ge \chi(S)$. First, we can flatten $S$ to $S^\flat$ and eliminate $\flat$-disks of width $\le 1$, guaranteeing that no patches of $S^\flat\cap B^s$ have positive index (the same is true for $S^\flat\cap B^u$ but we do not need that here). The only patches which have nonzero intersection with $\bigcup_i\core(U_i)$ are the meridional disk patches. If $C$ is a meridional disk patch, let $\sgn(C)=\pm1$ according to whether the algebraic intersection of $C$ with the core of the corresponding component of $U(C)$ of $U$ is $\pm1$. Then
\[
2\langle e_\tau,[S]\rangle=\sum_{\text{merid}} \sgn(C)\cdot \ind(U(C)),
\]
where the sum is over only meridional patches. Since all patches have nonpositive index,
\[
\sum_{\text{merid}} \sgn(C)\cdot \ind(U(C))\ge \sum_{\text{merid}}  \ind(C) \ge \sum_{\text{all patches}} \ind(P)=2\chi(S^\flat)=2\chi(S),
\]
where the final sum is over all patches of $S^\flat \cap B^s$.
\end{proof}

\begin{lemma}\label{lem_ifcarriedthentaut}
If $S$ is carried by $\tau^{(2)}$ (as a partial branched surface) then $S$ is taut and $[S]\in \cone(\sigma_\tau)$.
\end{lemma}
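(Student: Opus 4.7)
The plan is to prove the single identity $\langle e_\tau, [S]\rangle = \chi(S)$ for every surface $S$ carried by $\tau^{(2)}$. Given this, the chain
\[
-\chi(S) = \langle -e_\tau, [S]\rangle \le x([S]) \le -\chi(S),
\]
where the first inequality is \Cref{lem_dualnorm} applied at $[S]$ (using $x^*(e_\tau)\le 1$ in the form $|\langle e_\tau,\alpha\rangle|\le x(\alpha)$) and the second comes from $S$ itself representing $[S]$, forces $x([S])=-\chi(S)$ (so $S$ is taut) and $\langle -e_\tau,[S]\rangle = x([S])$ (so $[S]\in\cone(\sigma_\tau)$). Any sphere or null-homologous component of $S$ would also be carried and hence incompressible, ruling it out by irreducibility of $M$ and \Cref{lem_atoroidal}, so the tautness conclusion is genuine.

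To prove the identity I would rerun the patch count from the proof of \Cref{lem_dualnorm}, sharpening each inequality into an equality. Carriedness lets me first isotope so that $S\cap U$ is a disjoint union of meridional disks and $\pi_1$-injective annuli, and then flatten $S$ to obtain $S^\flat$ with only positive plates and no $\flat$-disks at all. The patches of $S^\flat\cap B^s$ now split into exactly three classes: meridional disks inside some $U_i$ (of index $\ind(U_i)$), annuli inside some $U_i$ (of index $0$), and patches lying entirely in $\mr M$. The main technical point is to show each in-$\mr M$ patch has index exactly $0$: locally, inside a $\tau$-face $\hex$ the train track $B^s\cap\hex$ is a trivalent switch with three stops and cuts $\hex$ into three bigons-with-corners each of index $0$, and the rod gluings identify corners with corners and switches with switches, so the globally assembled in-$\mr M$ patches still have index $0$.

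The last piece is the sign in the formula $2\langle e_\tau,[S]\rangle = \sum_{\text{merid}}\sgn(C)\cdot\ind(U(C))$ from the proof of \Cref{lem_dualnorm}. For each fixed $U_i$, all meridional disks of $S\cap U_i$ inherit their coorientation coherently from $\tau^{(2)}$, so $\sgn(C)$ is constant as $C$ varies over meridional disks in $U_i$; tracing this coherent coorientation against the stipulated orientation of $\core(U_i)$ (chosen so that curves parallel to a ladder of $\partial\hbs$ wind positively) shows the common value is $+1$. Plugging in and combining with \eqref{indexeq} gives
\[
2\langle e_\tau,[S]\rangle \;=\; \sum_{\text{merid}}\ind(U(C)) \;=\; \sum_{\text{all patches}}\ind(P) \;=\; 2\chi(S),
\]
which is the desired identity. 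I expect the index-zero claim for in-$\mr M$ patches and the careful sign verification to be the only real bookkeeping hurdles; the rest of the argument is an immediate sandwiching via results already in hand.
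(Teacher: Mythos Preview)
Your core argument---establish the identity $\langle e_\tau,[S]\rangle=\chi(S)$ for carried $S$ and then sandwich using $x^*(e_\tau)\le 1$---is exactly the paper's approach. One small correction to the bookkeeping: there are no ``patches lying entirely in $\mr M$.'' Every complementary region of $B^s$ in $M$ contains a component of $U$, so every patch of $S^\flat\cap B^s$ meets some $U_i$ and is either a meridional disk or a $\pi_1$-injective annulus. What you are really computing is that the annulus patches have index $0$; the paper phrases this by observing that their boundary curves on $\partial U$ are ladderpole curves, which by \Cref{lem_tipcondition} cross no upward-ladder rungs and hence contribute no $B^s$-switches.

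The genuine gap is your treatment of nulhomologous components. The step ``carried and hence incompressible'' is not justified: being carried by a partial branched surface does not by itself imply incompressibility. Moreover, even granting your identity componentwise, a nulhomologous component has $\chi=0$ and is therefore a torus; atoroidality then says it is \emph{compressible}, which is the opposite of what you would need for a contradiction, so the argument stalls. The paper handles this point differently and directly: it notes that the dual graph $\Gamma$ is strongly connected, so through every $\tau$-face there is a closed positive transversal to $\tau^{(2)}$. Any component of a carried surface traverses at least one $\tau$-face and therefore pairs strictly positively with some class in $H_1(M)$, hence cannot be nulhomologous.
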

\begin{proof}
By \cite[Lemma 5.8]{LMT20} the dual graph $\Gamma$ is \emph{strongly connected}, i.e. for any $\Gamma$-vertices $u$ and $v$, there exists a directed path in $\Gamma$ from $u$ to $v$. This implies that for each $\tau$-face $\hex$, there is an closed oriented curve through $\hex$ which is positively transverse to $\hbs$. If $S$ is carried by $\hbs$, and $T$ is any component of $S$, it follows that we can find a closed oriented curve whose intersections with $T$ are all positive. Therefore $S$ has no nulhomologous components.

Next we show that $\chi_-(S)=x([S])$. We have $x([S])\le \chi_-(S)$ by the definition of $x$, and by the above lemma we have that $x^*(-e_\tau)\le 1$. If $S$ is carried by $\tau^{(2)}$ then each patch of $S^\flat\cap B^s$ intersecting a component $u$ of $U$ corresponds to either a meridional curve on $\partial U$ which traverses exactly $\prongs(u)$ rungs of upward ladders, or to 2 ladderpole curves crossing no rungs. It follows that each patch of $S\cap B^s$ with nonzero index is a meridional disk patch with index equal to the index of the corresponding tube. Hence $\langle e_\tau, [S] \rangle=\chi(S)$. Therefore
\[
\chi_-(S)=\langle -e_\tau,[S]\rangle\le x([S]),
\]
completing the proof.
\end{proof}

\begin{theorem}\label{prop_carriedcriterion}
Let $\alpha\in H_2(M)$ be an integral class. Then $\alpha\in  \C_\tau^\vee$ if and only if there exists a surface $S$, necessarily taut, carried by $\hbs$ with $[S]=\alpha$.
\end{theorem}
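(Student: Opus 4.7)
The forward direction is direct. If $S$ is carried by $\hbs$, isotope so that $S \cap \mr M$ is carried by $\hbs \subset \mr M$ and $S\cap U$ is a union of $\pi_1$-injective disks and annuli. Any oriented cycle $\gamma$ in the dual graph $\Gamma \subset \mr M$ can be arranged to be positively transverse to $\tau^{(2)}$, so $\langle [S],[\gamma]\rangle \ge 0$. Since by definition $\C_\tau$ is the convex cone generated by such $[\gamma]$, we conclude $[S] \in \C_\tau^\vee$. Tautness is \Cref{lem_ifcarriedthentaut}.

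For the converse, my plan is to realize carried surfaces as integer points of a rational polyhedral cone $W^+ \subset \R_{\ge 0}^{\{\tau\text{-faces}\}}$ of nonnegative weight systems satisfying the branched-surface switch equation at each $\tau$-edge, and to identify the image of $W^+$ in $H_2(M;\R)$ with $\C_\tau^\vee$. Each integer point $w \in W^+$ determines a surface $S_w$ in $\mr M$ carried by $\hbs$, with boundary lying in the pseudohyperbolic train track $\partial \hbs$. The $\ge 3$ prongs assumption on the Dehn filling slopes will let me close up these boundary curves inside $U$: meridional components bound disks in $U$, and ladderpole components pair off as $\pi_1$-injective annuli. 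This produces a surface in $M$ carried by $\hbs$ as a partial branched surface, of some class $\Phi(w) \in H_2(M;\R)$.

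The key step, which adapts a result of \cite{LMT20} via the techniques of \cite{Lan18}, is to establish $\Phi(W^+)=\C_\tau^\vee$. The containment $\Phi(W^+) \subset \C_\tau^\vee$ follows from the forward direction already proved. The reverse is a Farkas-style duality: the switch equations defining $W^+$ are dual in the appropriate sense to the $1$-cycle conditions on $\Gamma$ defining $\C_\tau$. Given an integral $\alpha \in \C_\tau^\vee$, the duality will produce an integer weight system $w \in W^+$ with $\Phi(w)=\alpha$, and hence a surface carried by $\hbs$ of the desired homology class.

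The main obstacle is the transition from the bordered setting $\mr M$, where an analogous duality is proved in \cite{LMT20}, to the closed Dehn-filled manifold $M$. One must check that every integral class in $\C_\tau^\vee \subset H_2(M;\R)$ lifts to an integer weight system in $W^+$ whose boundary curves on $\partial \mr M$ extend properly across the filling. This is precisely where the $\ge 3$ prongs condition is needed: it ensures that ladderpole slopes differ from meridional slopes of $U$, so the boundary of $S_w$ splits cleanly into disk-bounding and annulus-completing families, and the resulting surface has homology class $\alpha$.
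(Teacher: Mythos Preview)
Your approach is essentially the same as the paper's: use the duality result of \cite{LMT20} in the bordered manifold $\mr M$ to produce a carried surface $\mr S$ representing the Lefschetz dual of the pullback of $\alpha$, then invoke the boundary analysis of \cite{Lan18} to cap off in $U$. Two small points deserve sharpening. First, the reason the capped-off surface in $M$ has class exactly $\alpha$ is the injectivity of the ``puncturing map'' $H_2(M)\to H_2(\mr M,\partial\mr M)$ given by the composite $PD$--pullback--$LD$; you should make this explicit rather than leaving it implicit in ``$\Phi(w)=\alpha$.'' Second, your diagnosis of where the $\ge 3$ prongs hypothesis enters is slightly off: the trichotomy that boundary curves of a carried surface on $\partial U_i$ are either meridional, nulhomologous ladderpole, or empty comes from the structure of the pseudohyperbolic train track together with the fact that $\alpha$ lifts from $H_2(M)$ (so $\partial\mr S$ is nulhomologous in each $U_i$), not from ladderpole $\ne$ meridian per se. The $\ge 3$ prongs assumption is really used for the tautness claim via \Cref{lem_ifcarriedthentaut}.
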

\begin{proof}
If $S$ is carried by $\tau$ and represents $\alpha$, it is clear that $\alpha\in \C_\tau^\vee$.

Conversely, suppose $\alpha\in \C_\tau^\vee$. Let $u\in H^1(M)$ be the Poincare dual of $\alpha$. Note that $u$ pulls back to a cohomology class $\mr u$ on $\mr M$ which takes integral values on directed cycles in $\Gamma$. Let $\mr \alpha\in H_2(M,\partial M)$ be the Lefschetz dual of $\mr u$. As explained in \cite[Proposition 5.11]{LMT20},
$\alpha$ is represented by a surface $\mr S$ which is carried by $\tau^{(2)}$, viewed as a branched surface in $\mr M$. 

The proof of \cite[Lemma 3.3]{Lan18} goes through in this setting and thus, for each component $\partial U_i$ of $\partial \mr M$, $S\cap \partial U_i$ is either (i) a nulhomologous collection of curves of ladderpole slope on $\partial U_i$, (ii) a collection of curves with the meridional slope of $U_i$, or (iii) empty. Hence $\mr S$ can be capped off to give a surface $S\subset M$ which is carried by $\tau^{(2)}$. 
The composition $P$ of the maps 
\[
H_2(M)\xrightarrow{PD} H^1(M)\xrightarrow{i^*} H^1(\mr M)\xrightarrow{LD} H_2(\mr M,\del\mr M),
\] 
where the first and last arrows are Poincar\'e and Lefschetz duality and the middle arrow is pullback under inclusion, is injective as explained in \cite[\S 2.1]{Lan18} (in that paper it is called the \emph{puncturing map}). We have $P(\alpha)=\mr \alpha=P([S])$, so $[S]=\alpha$. 
By \Cref{lem_ifcarriedthentaut}, $S$ is taut.
\end{proof}

\begin{corollary}\label{lem_easycontain}
$\C_\tau^\vee\subset \cone(\sigma_\tau)$.
\end{corollary}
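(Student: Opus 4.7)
The plan is to assemble the corollary directly from \Cref{prop_carriedcriterion} and \Cref{lem_ifcarriedthentaut}, after a brief density/closedness argument to pass from integral classes to the full real cone. There is no serious obstacle: the work has already been done in the preceding two results, and this corollary is essentially bookkeeping.

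First I would observe that $\C_\tau^\vee$ is a rational polyhedral cone (as the dual of the rational cone $\C_\tau\subset H_1(M)$ generated by homology classes of cycles in $\Gamma$), so its integral points are Zariski-dense, and in particular the set of positive rational multiples of its integral points is dense in $\C_\tau^\vee$. Next, $\cone(\sigma_\tau)$ is a closed subset of $H_2(M)$, being either $\{0\}$ or the closed cone on a face of the rational polyhedron $B_x(M)$. Hence to prove $\C_\tau^\vee\subset\cone(\sigma_\tau)$ it suffices to prove the containment on integral classes.

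So let $\alpha\in \C_\tau^\vee\cap H_2(M;\Z)$. By \Cref{prop_carriedcriterion} there exists a taut surface $S\subset M$ carried by $\hbs$ with $[S]=\alpha$. Then \Cref{lem_ifcarriedthentaut} gives $\alpha=[S]\in\cone(\sigma_\tau)$. This completes the containment on the integral points, and by the density/closedness remark above it extends to all of $\C_\tau^\vee$.

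In short, the corollary is immediate modulo the observation that $\cone(\sigma_\tau)$ is closed and that $\C_\tau^\vee$ is the closure of its rational points; the content is entirely in \Cref{prop_carriedcriterion} (producing the carried surface) and \Cref{lem_ifcarriedthentaut} (showing a carried surface lies in the desired cone).
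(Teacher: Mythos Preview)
Your proof is correct and takes essentially the same approach as the paper: apply \Cref{prop_carriedcriterion} to an integral class in $\C_\tau^\vee$ to get a carried surface, then invoke \Cref{lem_ifcarriedthentaut} to land in $\cone(\sigma_\tau)$. You include an explicit density/closedness argument to pass from integral classes to the full cone, which the paper leaves implicit.
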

\begin{proof}
If $\alpha\in \C_\tau^\vee$ is an integral class then \Cref{prop_carriedcriterion} furnishes a surface representing $\alpha$ carried by $\hbs$, so $\alpha\in \cone(\sigma_\tau)$ by \Cref{lem_ifcarriedthentaut}.
\end{proof}


\section{The bigon property and efficient bigon property}\label{sec_bigprops}

\subsection{The bigon property} \label{sec_bigonproperty}

Let $S$ be an embedded surface in $M$ transverse to $B^s$ and $B^u$. Each switch of $S\cap B^s$ corresponds to an intersection of $S$ with $\Gamma$, and such an intersection point can be either positively or negatively oriented according to whether the orientation of $\Gamma$ agrees with the coorientation of $S$ at the intersection point. Switches corresponding to positive (negative) intersections will be called \textbf{positive} (\textbf{negative}) switches.

Suppose that
\begin{enumerate}[label=(\alph*)]
\item no component of $S$ is nulhomologous,
\item $S$ has simple patches with respect to $B^s$,
\item each patch of $S\cap B^s$ has nonpositive index, and
\item each negative switch of $S\cap B^s$ belongs to a bigon.
\end{enumerate}
Then we say $S$ has the \textbf{stable bigon property}. If $S$ has the above properties with respect to $B^u$, then $S$ has the \textbf{unstable bigon property}. If $S$ has both the stable and unstable bigon properties, we say $S$ has the \textbf{bigon property}. The bigon property will provide us with traction as we try to isotope  surfaces to be carried by $\hbs$. Although we will not need to use the following fact, the bigon property has topological and algebraic consequences:

\begin{lemma}\label{lem_bigontaut}
Let $S$ be a surface with the stable or unstable bigon property. Then $S$ is taut and $[S]\in\cone(\sigma_\tau)$.
\end{lemma}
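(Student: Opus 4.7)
The plan is to prove that $\chi(S) = \langle e_\tau, [S]\rangle$. Combined with \Cref{lem_dualnorm} and the definition of the Thurston norm, this equality collapses the chain $\chi(S) \leq \langle e_\tau, [S]\rangle \leq x([S]) \leq -\chi(S)$, forcing $x([S]) = -\chi(S)$ (so $S$ is taut, using (a) together with \Cref{lem_irreducible} to exclude nulhomologous and sphere components) and $\langle -e_\tau, [S]\rangle = x([S])$ (so $[S] \in \cone(\sigma_\tau)$). So the whole lemma reduces to the single equality $\chi(S) = \langle e_\tau, [S]\rangle$.

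First I would compare the two index identities $2\chi(S) = \sum_P \ind(P)$ and $2\langle e_\tau, [S]\rangle = \sum_C \sgn(C)\,\ind(U(C))$, summed respectively over all patches of $S \cap B^s$ and over its meridional disk patches, just as in the proof of \Cref{lem_dualnorm}. The first substantive step is to show that every meridional disk patch $C$ satisfies $\sgn(C) = +1$. In the mapping-torus structure of the tube $R(C)$ furnished by \Cref{lem_tipcondition}, all $\Gamma$-edges on $\partial R(C)$ are cooriented along the core direction, and every switch on $\partial C$ carries the same 3D intersection sign as $\sgn(C)$. If $\sgn(C)$ were $-1$, then each of the $\prongs(U(C)) \geq 3$ switches on $\partial C$ would be a negative train-track switch, and since $C$ is the unique cusp patch at each of its own switches, property (d) would force $C$ to be a bigon, contradicting that $C$ has at least three cusps.

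It then remains to show that $\sum_{\text{non-merid}}\ind(P) = 0$. By (c) each term is nonpositive, so I must rule out non-meridional patches of strictly negative index. For a hypothetical non-bigon non-meridional disk patch $P$, property (d) forces every cusp of $P$ to be a positive train-track switch. Since $P$ is non-meridional, $\partial P$ is nullhomotopic on the torus $\partial R(P)$ and so has zero algebraic intersection with every $\Gamma$-curve there; comparing the 2D crossing signs on the torus with the 3D switch signs (both $\Gamma$-curves and coorientations being controlled via the core direction), I expect to derive a contradiction, showing that such a $P$ cannot exist and, by an analogous argument, that every annulus patch has no switches. This combinatorial step is the main obstacle; once completed, the identities close up as $2\chi(S) = \sum_C \ind(C) = 2\langle e_\tau,[S]\rangle$, finishing the proof.
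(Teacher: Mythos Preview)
Your approach is exactly the paper's: establish $\chi(S)=\langle e_\tau,[S]\rangle$ and then squeeze with \Cref{lem_dualnorm}. The two arguments diverge only in how they handle the accounting of switches, and this is where your proposal has a genuine (self-identified) gap.

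For the meridional disks, your claim that \emph{every} switch on $\partial C$ carries sign $\sgn(C)$ is slightly too strong: a wavy meridian could in principle cross a cusp curve back and forth. What is true, and suffices, is that the \emph{sum} of the 3D switch signs over $\partial C$ equals $\pm\prongs(U(C))$; since $C$ has at least $\prongs(U(C))\ge 3$ switches it is not a bigon, so by (d) all its switches are positive, and then the positive sum forces $\sgn(C)=+1$ and $\ind(C)=\ind(U(C))$. The paper runs this in the other direction (``all switches positive $\Rightarrow$ $\sgn=+1$'') but it is the same content.

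The step you flag as ``the main obstacle'' is exactly what the paper dispatches by appealing to the tube structure. The missing ingredient is that for any patch $P$ in a tube $R$, the 3D sign of a switch $p\in\partial P$ equals the 2D crossing sign of $\partial P$ with the cusp curve on the torus $\partial R$ (orient $\partial R$ as the boundary of $R$; expand the $3\times 3$ determinant using that the outward-in-$S$ normal to $\partial P$ has positive component out of $R$). Granting this, a nonmeridional disk has $[\partial P]=0$ in $H_1(\partial R)$ and a $\pi_1$-injective annulus has $[\partial_1 A]=-[\partial_2 A]$ there, so in both cases the 2D algebraic intersection of $\partial P$ with every cusp curve vanishes. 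Hence positive and negative switches balance; combined with (d) this forces nonmeridional disks to be bigons (else a negative switch would sit on a non-bigon) and annuli to be switch-free. This closes the identity $2\chi(S)=\sum_{\text{merid}}\ind(C)=\sum_{\text{merid}}\ind(U(C))\cdot\sgn(C)=2\langle e_\tau,[S]\rangle$ as you wanted. So your outline is correct; you just need to supply this 2D/3D sign identification, which the paper encapsulates in the phrase ``structure of the tubes.''
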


\begin{proof}
We break symmetry and suppose that $S$ has the stable bigon property. Then the only patches of $S\cap B^s$ which possibly have nonzero index are meridional disk and topological annulus patches. It is impossible for a topological annulus patch of $S\cap B^s$ to have a positive switch without also having a negative switch (this follows from the structure of the tubes of $B^s$ and $B^u$, see discussion at the beginning of \Cref{sec_easycontainment}), so the topological annulus patches must be annulus patches i.e. must have no switches. Hence $2\chi(S)$ is the sum of the indices of its meridional patches. Because the switches of these patches are all positive, each one contributes $+1$ to the algebraic intersection of $[S]$ with the core of the tube. Hence $2\chi(S)$ is equal to $2\langle e_\tau,[S]\rangle$, so
\[
\chi_-(S)=\langle -e_\tau,[S]\rangle\le x([S])\le\chi_-(S),
\]
using the fact that $x^*(-e_\tau)\le1$ by \Cref{lem_dualnorm}. Since no component of $S$ is nulhomologous, $S$ is taut.
\end{proof}

\begin{lemma}\label{lem_indexsumarg}
Let $S$ be a taut surface in $M$ with $[S]\in \cone(\sigma_\tau)$. Then $S$ is isotopic to a flattened surface with the bigon property.
\end{lemma}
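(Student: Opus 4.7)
The plan is to begin with the taut surface $S$ and apply \Cref{lem_nonullgons} to isotope it to a flattened surface $S^\flat$ such that all patches of both $S^\flat\cap B^s$ and $S^\flat\cap B^u$ are simple and have nonpositive index. This at once secures properties (a)--(c) of the bigon property for both $B^s$ and $B^u$: (a) holds because $S$ is taut, and (b) and (c) are the conclusion of \Cref{lem_nonullgons}. Only property (d) requires further work, and it is here that the hypothesis $[S]\in\cone(\sigma_\tau)$ is decisive.

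Since $S$ is taut and $[S]\in\cone(\sigma_\tau)$, by definition of $\cone(\sigma_\tau)$ we have $-\chi(S)=x([S])=\langle -e_\tau,[S]\rangle$, so $\chi(S)=\langle e_\tau,[S]\rangle$. Plugging this equality into the chain of inequalities used to prove \Cref{lem_dualnorm} yields
\[
2\langle e_\tau,[S]\rangle=\sum_{\text{merid}}\sgn(C)\ind(U(C))\ge \sum_{\text{merid}}\ind(C)\ge \sum_{\text{all patches}}\ind(P)=2\chi(S^\flat)=2\chi(S),
\]
with sums over patches of $S^\flat\cap B^s$, forcing both intermediate inequalities to become equalities. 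Combined with nonpositivity of every $\ind(P)$, the second equality forces every non-meridional patch to have index exactly $0$, hence to be either a bigon or a topological annulus without switches. The first equality, term by term, requires $\sgn(C)\ind(U(C))=\ind(C)$ for each meridional patch $C$. If $\sgn(C)=-1$ this would force $\ind(C)=\prongs(U(C))-2\ge 1$ by the $\ge 3$ prongs hypothesis, contradicting nonpositivity; so no meridional patch is negatively oriented. Each remaining meridional patch has exactly $\prongs(U(C))$ switches.

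To finish property (d) for $B^s$, observe that because the coorientation of $S^\flat$ is uniform along any connected patch and $\Gamma$ is coherently oriented, every boundary switch of a meridional patch $C$ shares the common sign $\sgn(C)=+1$. Hence every negative switch of $S^\flat\cap B^s$ lies on the boundary of a non-meridional patch, which is either a bigon or a switch-free annulus; only the former supports a switch, so every negative switch lies on a bigon, as required. The identical argument applied to $B^u$ --- using the same flattened surface, which is valid by \Cref{lem_nonullgons}, together with the symmetric definition of $e_\tau$ --- gives (d) for $B^u$. The most delicate step is the assertion that all boundary switches of a meridional patch share a common sign, which is a direct consequence of the uniformity of the coorientation along a connected patch and of $\Gamma$'s coherent orientation through the tube containing $C$.
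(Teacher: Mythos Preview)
Your proof is correct and follows essentially the same index-sum argument as the paper. Both start from \Cref{lem_nonullgons} to secure (a)--(c), then force the chain of inequalities behind \Cref{lem_dualnorm} to collapse to equalities using $[S]\in\cone(\sigma_\tau)$, concluding that every nonmeridional patch is a bigon or a switch-free annulus and that every meridional patch has sign $+1$ with the minimal number of switches. The only place where your write-up is slightly looser than the paper's is the clause ``$\Gamma$ is coherently oriented'' to justify that all switches of a meridional patch are positive: the real reason is that once the patch has exactly $\prongs(U(C))$ switches, its algebraic intersection with the branch curves on the tube boundary equals its geometric intersection, forcing every intersection to be positive; your phrasing suggests the conclusion holds regardless of the switch count, which it does not. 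The paper's own sentence at this step is comparably terse, so this is a matter of exposition rather than a gap.
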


\begin{proof}
By \Cref{lem_nonullgons}, we can flatten $S$ to $S^\flat$ so that $S^\flat$ has simple patches with nonpositive index. We will show that $S^\flat$ has the stable bigon property; the proof that $S^\flat$ has the unstable bigon property is entirely similar.

To prove $S^\flat$ has the stable bigon property it suffices to show that any patch of $S^\flat\cap B^s$ which is not a bigon has only positive switches. 

For a tube $T$ of $B^s$, define $\ind(T)$ to be the index of the component of $U$ at the core of $T$. 
Just as in the proof of \Cref{lem_dualnorm}, the patches of $S^\flat$ are all topologically disks or annuli and of these, the only patches having nonzero algebraic intersection with the core of the corresponding tube are meridional disks. 
Therefore the definition of $e_\tau$ gives us
\[
2\chi(S^\flat)=-2x([S^\flat])=2e_\phi([S^\flat])=\sum_{\text{tubes $T$}}\left(\sum_{\text{merid. disks $D$}}\ind(T)\cdot\text{sign}(D)\right),
\]
where $\text{sign}(D)=\pm1$ is the algebraic intersection of $D$ with the core of the corresponding tube. Subtracting \Cref{indexeq} from the above, we have
\[
0=\sum_{\text{tubes $T$}}\left(\sum_{\text{merid. disks $D$}}\ind(T)\cdot\text{sign}(D)-\ind(D)\right) -\sum_{\text{nonmerid. $C$}}\ind(C),
\]
so
\[
\sum_{C}\ind(C)=\sum_{T}\left(\sum_{D}\ind(T)\cdot\text{sign}(D)-\ind(D)\right).
\]
Every term of the lefthand sum is nonpositive since there are no monogons or nullgons. Moreover every term of the righthand sum is nonnegative. For disks with $\sgn(D)=-1$ this is trivial and for disks $D$ with $\sgn(D)=1$ this follows from the fact that any meridional disk of a tube $T$ has index bounded above by the index of the tube in which it sits.

We conclude that each term is zero. It follows that every nonmeridional patch has index 0 and is therefore an annulus or bigon. Each meridional patch has sign 1 and index equal to the index of the corresponding tube, so each switch of a non-bigon patch must be positive, whence $S$ has the stable bigon property. An identical argument shows that $S^\flat$ has the unstable bigon property.
\end{proof}

\begin{lemma}\label{lem_negativerungs}
Let $S^\flat$ be a flattened surface with the bigon property and let $\gamma$ be a component of $S^\flat\cap \partial\mr M$. Then the following are equivalent:
\begin{enumerate}[label=(\roman*)]
\item$\gamma$ contains a negative rung conduit,
\item $\gamma$ bounds a $\flat$-disk, and
\item the patches of $S^\flat\cap B^s$ and $S^\flat\cap B^u$ containing $\gamma$ are both bigons.
\end{enumerate}
\end{lemma}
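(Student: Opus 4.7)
The plan is to establish a cyclic chain \((iii)\Rightarrow(ii)\Rightarrow(i)\Rightarrow(iii)\), building on a local \emph{dictionary} between plates of $S^\flat$ adjacent to $\gamma$ and the combinatorial features of $\gamma$, and then to leverage the veering rules from \Cref{lem_traintrackfacts} and \Cref{lem_tipcondition}. First I would set up the dictionary. Since $B^s$ is disjoint from $\partial\mr M$ and $\partial U_i$ lies in a single tube $R_i$ of $B^s$ (and similarly for $B^u$), there is a unique patch $p_s$ (resp.~$p_u$) whose boundary contains $\gamma$, and by the simple patches condition each is a disk or annulus. Each plate of $S^\flat$ meeting $\gamma$ corresponds to a conduit segment of $\gamma$ and contributes a single switch of $S^\flat\cap B^s$; by the $B^s$ case of \Cref{lem_tipcondition} and \Cref{lem_traintrackfacts}, that switch is a cusp of $p_s$ (as opposed to a smooth point of $\partial p_s$) exactly when the conduit sits at an upward-rung tip at $U_i$, and its sign equals the sign of the conduit. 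Symmetrically, the cusps of $p_u$ correspond to the downward-rung conduits of $\gamma$.

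For \((iii)\Rightarrow(ii)\): a bigon is a disk with two switches, but a meridional disk patch of $S^\flat\cap B^s$ has at least $\prongs(U_i)\ge 3$ switches (using the mapping-torus description of tubes from the start of \Cref{sec_easycontainment}). So if $p_s$ is a bigon it must be a nonmeridional disk, forcing $\gamma$ to be nullhomotopic on $\partial U_i$ and hence to bound a $\flat$-disk. For \((i)\Rightarrow(iii)\): a negative rung conduit $c$ (WLOG at an upward-rung tip; the downward-rung case is symmetric) sits next to a negative plate $q$, and by the dictionary the switch of $S^\flat\cap B^s$ at $q$ is a negative cusp of $p_s$. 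Condition (d) of the stable bigon property then makes $p_s$ a bigon. Applying \((iii)\Rightarrow(ii)\) already shown, $\gamma$ is nullhomotopic, so $p_u$ is a nonmeridional disk; applying \((ii)\Rightarrow(i)\) (next paragraph) to the $B^u$ side produces a negative downward-rung conduit of $\gamma$, which by the $B^u$ dictionary gives $p_u$ a negative cusp and hence makes $p_u$ a bigon.

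The substantive step is \((ii)\Rightarrow(i)\). Given the $\flat$-disk $\delta$, I would argue that $\partial\delta$ must have a cusp of $\delta$ in the sense of \Cref{sec_flatsurf}, i.e.\ a convex mixed junction segment. Since $\gamma$ is a closed transversal of the oriented vertical foliation on $\partial N_\epsilon$ and $\delta$ is a bounded disk, not all convexity along $\partial\delta$ can be concave, which forces at least one mixed convex junction. Then I would look at the local picture at the vertex $v$ of $\partial\hbs$ underlying such a cusp and argue that the two ladderpole branches incident to $v$ both have coorientations pointing into the same adjacent ladder (this is the content of \Cref{lem_fans} combined with the upward/downward distinction of flat triangles), so a mixed-sign cusp with both adjacent conduits ladderpole is incompatible with the convexity of the junction. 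Hence at least one of the two adjacent conduits is a rung conduit; since the signs are opposite, the rung one is the negative one, giving (i).

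The main obstacle will be the local argument in \((ii)\Rightarrow(i)\) ruling out a ladderpole-ladderpole cusp and more generally correctly reading off the sign data from the veering combinatorics around $v$; everything else is a dictionary translation from \Cref{lem_traintrackfacts}, \Cref{lem_tipcondition}, and the stable/unstable bigon property. If the ladderpole-ladderpole ruleout turns out to require a size~$>0$ cusp analysis, I would fall back on decomposing $\delta$ using the moves from \Cref{sec_flatisotopy} (face moves, tetrahedron moves, width-one moves) to reduce to the size-$0$ case where the combinatorics at $v$ is fully pinned down by \Cref{lem_fans}.
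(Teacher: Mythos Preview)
Your overall architecture is reasonable and your dictionary via \Cref{lem_tipcondition} is correct, but there is a genuine gap in the step $(ii)\Rightarrow(i)$, on which your $(i)\Rightarrow(iii)$ also depends. You correctly rule out a cusp with two ladderpole conduits: the two ladderpole branches at $v$ lie on the same smooth ladderpole curve, so a segment of $\gamma$ running along it has both conduits with the \emph{same} sign, contradicting mixedness. But from ``at least one conduit at the cusp is a rung'' you jump to ``the rung one is the negative one.'' That does not follow: at a cusp one conduit is positive and one is negative, and if exactly one is a rung there is no reason it must be the negative one. Your fallback of simplifying $\delta$ by flat isotopies does not touch this issue either.

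The clean route---which is what the paper's one-line proof is implicitly invoking along with \Cref{lem_tipcondition}---is a parity fact about tubes: for a nonmeridional disk patch $p$ of $S^\flat\cap B^s$ (or $B^u$), the number of positive switches equals the number of negative switches. Indeed, $\partial p$ is an embedded curve on the boundary torus of the tube $R_i$ that is nullhomotopic in $R_i$ but not a meridian, hence nullhomotopic on $\partial R_i$; so it has algebraic intersection zero with each cusp circle (a component of $\Gamma\cap\partial R_i$), and each such intersection is precisely a switch of $p$ with matching sign. Granting this, the equivalences are immediate: if $\gamma$ bounds a $\flat$-disk then $p_s$ and $p_u$ are nonmeridional disks with at least two switches (index $\le 0$) and signed switch count zero, hence each has a negative switch and is a bigon by condition~(d), giving $(ii)\Rightarrow(iii)$; a bigon has one positive and one negative switch, and the negative one is a negative rung conduit by the dictionary, giving $(iii)\Rightarrow(i)$; and $(i)\Rightarrow(ii)$ is your argument unchanged. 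This bypasses the cusp analysis entirely.
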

\begin{proof}
This follows from \Cref{lem_tipcondition} and the definition of the bigon property. 
\end{proof}

\begin{lemma}\label{lem_diskproperties}
Let $S^\flat$ be a flattened surface with the bigon property. Each $\flat$-disk $\delta$ of $S^\flat$ has width two and has exactly four rung conduit segments.
If in addition $S^\flat$ has a $\flat$-disk with two adjacent rung conduit segments meeting at a free cusp of size 1, then $S^\flat$ is flat isotopic to a surface with smaller area. 
\end{lemma}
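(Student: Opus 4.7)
The plan is to prove the two claims separately. For the width-and-rung-count, the approach combines Lemma \ref{lem_negativerungs}(iii) with Lemma \ref{lem_tipcondition}. For the area reduction, the approach chains a tetrahedron move with a face move.

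Set $\gamma := \partial\delta$. By Lemma \ref{lem_negativerungs}(iii), $\gamma$ lies in bigon patches of both $S^\flat \cap B^s$ and $S^\flat \cap B^u$. A bigon has exactly two switches, and by Lemma \ref{lem_tipcondition} the two switches of the $B^s$-bigon on $\gamma$ correspond to two upward-ladder rung conduit segments of $\gamma$, while the two switches of the $B^u$-bigon on $\gamma$ correspond to two downward-ladder rung conduit segments. Hence $\gamma$ has exactly four rung conduit segments.

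For the width, lift $\delta$ to a disk $\wt\delta$ in the universal cover $\wt{T_\delta}$ of its boundary torus. The simple closed curve $\partial\wt\delta$ bounds the compact disk $\wt\delta$, and within conduits it runs parallel to the corresponding branches of $\partial\hbs$; in particular it crosses between the two sides of a ladder only via rung conduits. Any ladder entered by $\partial\wt\delta$ via a rung must then be exited via another rung of the \emph{same} ladder, since otherwise $\partial\wt\delta$ could not close up within a bounded region of the plane $\wt{T_\delta}$. With only two upward-rung and two downward-rung conduit segments available, both upward rungs must lie in a single upward ladder $L_u$ and both downward rungs must lie in a single downward ladder $L_d$. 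Connectivity of $\wt\delta$ together with the absence of rung crossings for any intermediate ladder forces $L_u$ and $L_d$ to be adjacent, sharing a common ladderpole. The ladders $L_u$ and $L_d$ are then the only ones whose two ladderpoles are met by $\partial\wt\delta$, so the width of $\delta$ is exactly $2$.

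For the final claim, apply the tetrahedron move at the given free size-$1$ cusp. This sweeps $S^\flat$ across the $\tau$-tetrahedron associated with the single noncuspidal vertex enclosed by the cusp. Because both conduit segments meeting at the cusp are rung conduits, the sweep brings the two segments into a common conduit of $\partial N_\epsilon$ with opposite signs, i.e., it produces backtracking in $\partial \mr S^\flat$. Applying the face move then eliminates this backtracking and strictly reduces the area of $S^\flat$ by two.

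The main technical obstacle I anticipate is verifying the backtracking assertion in every subcase: the two rung conduits at the cusp can be two rungs of $L_u$, two rungs of $L_d$, or one rung of each, and one must check using Lemma \ref{lem_fans} together with the local structure of flat triangles at the switch that the tetrahedron move in each situation really deposits the two segments inside a single conduit of $\partial N_\epsilon$ with opposite signs.
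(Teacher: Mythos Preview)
Your argument for the first claim (four rung conduits and width two) is correct and follows the same route as the paper: invoke Lemma~\ref{lem_negativerungs} to see that $\partial\delta$ bounds bigon patches for both $B^s$ and $B^u$, then use Lemma~\ref{lem_tipcondition} to translate the two switches of each bigon into two upward-ladder rungs and two downward-ladder rungs respectively. Your additional paragraph deducing width two from the rung count is a fine elaboration of what the paper leaves implicit.

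Your argument for the area-reduction claim, however, does not work as stated. First, your third subcase (one upward rung and one downward rung meeting at the cusp) cannot occur: a free cusp of size~$1$ encloses exactly one flat triangle, and a flat triangle lies entirely in a single ladder, so both rung conduits at the cusp must be rungs of that same ladder. Second, and more importantly, the tetrahedron move does not produce backtracking. The move replaces the two rung conduits at the cusp by the third edge of the enclosed flat triangle, and since that flat triangle has two vertices on one ladderpole and one on the other, this third edge is a \emph{ladderpole} edge. The resulting boundary $\partial\delta$ therefore loses both of its rungs in that ladder and retains only the two rungs in the adjacent ladder: the $\flat$-disk now has width one, not a backtracking configuration. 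This is exactly the paper's route---perform the tetrahedron move to obtain a $\flat$-disk of width one, then apply a width-one move to decrease area. Your proposed chain (tetrahedron move $\to$ backtracking $\to$ face move) fails at the first arrow.
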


\begin{proof}
Let $t^s=S^\flat\cap B^s$ and $t^u=S^\flat\cap B^u$. The $\flat$-disk $\delta$ corresponds to a bigon patch of $t^s$, so $\partial\delta$ must contain exactly two rung conduit segments crossing upward ladders by \Cref{lem_tipcondition}. Similarly $\delta$ corresponds to a bigon patch of $t^u$, so $\del\delta$ must contain exactly two rung conduit segments crossing downward ladders.

For the last claim, note that performing the available tetrahedron move on the free cusp creates a $\flat$-disk of width one. Applying a subsequent width one move decreases the area of $S^\flat$.
\end{proof}

\begin{corollary}\label{lem_nohingemoves}
Suppose $S^\flat$ is a flattened surface with the bigon property. Suppose that $S^\flat$ has two plates $P_1$ and $P_2$ separated by a single rod such that there exists a hinge tetrahedron $\tet$ which is immediately above $P_1$ and below $P_2$. Then $S^\flat$ is flat isotopic to a surface with smaller area.
\end{corollary}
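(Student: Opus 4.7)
The plan is to reduce to the second clause of \Cref{lem_diskproperties}: I will exhibit a $\flat$-disk of $S^\flat$ containing two adjacent rung conduit segments meeting at a free cusp of size $1$, after which that lemma produces the desired area-reducing isotopy. The input I want to exploit is that $\tet$ is hinge and that $P_1, P_2$ are separated by exactly one rod.

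The first step is to analyze the combinatorics at the $\tau$-edge $e$ shared by $\hex_1$ and $\hex_2$. Since $\hex_1$ is a bottom face and $\hex_2$ a top face of $\tet$, the dihedral angle of $\tet$ at $e$ is $0$, so $\tet$ belongs to one of the two fans of $e$. At each endpoint $v$ of $e$ there is a tip $F_v$ of $\tet$ having $e\cap\partial\mr M$ as a cuspidal vertex. Because $\tet$ is hinge, $F_v$ is a hinge flat triangle, so by \Cref{lem_fans} it is extremal (topmost or bottommost) in the fan of $e$ at $v$. This extremality is the main combinatorial consequence of hingeness that I will use.

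Next, I would read off how $\partial \mr S^\flat$ sits near the junction $J_v\subset\partial N_\epsilon$ corresponding to the rod between $P_1$ and $P_2$. The conduit segment of $\partial P_1$ at the $\hex_1$-tip of $v$ and the conduit segment of $\partial P_2$ at the $\hex_2$-tip of $v$ have opposite signs (one positive, one negative) because $P_1$ is topmost in its plate of $N_\epsilon$ while $P_2$ is bottommost in its plate, and the coorientations of $\hex_1,\hex_2$ both point into $\tet$. Since there is only one rod between $P_1$ and $P_2$, no other plate of $S^\flat$ lies between them in the stacking order on this side of $J_v$, so these two conduit segments are joined by a single mixed junction segment $j$ which is a free cusp of $\partial \mr S^\flat$. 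The extremal position of $\tet$ in the fan of $e$ at $v$ then means that exactly one flat triangle of $\partial\mr M$ (namely $F_v$) is nested between the two conduits on the cusp side, so $j$ has size $1$.

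Finally, I would identify the boundary circle through $j$ as bounding a $\flat$-disk and check that the two conduits adjacent to $j$ are rung conduits. The bigon property together with \Cref{lem_negativerungs} shows that the boundary circle containing the negative conduit segment bounds a $\flat$-disk $\delta$; the two adjacent conduits lie in rung conduits of $\partial N_\epsilon$ because, by \Cref{lem_tipcondition} combined with the hinge structure, the tips of $\hex_1$ and $\hex_2$ at $v$ cross ladders rather than being ladderpole tips. Then the second part of \Cref{lem_diskproperties} applied to $\delta$ finishes the argument. I expect the main obstacle to be the careful local verification that no plate of $S^\flat$ sneaks between $P_1$ and $P_2$ at $J_v$ and that hinge extremality forces cusp size exactly $1$; both are local checks constrained tightly by the bigon property but require attention to the vertical ordering in $\partial N_\epsilon$.
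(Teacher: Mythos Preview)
Your overall strategy matches the paper's: locate a tip of $\tet$ at which the conduit segments $t_1,t_2$ coming from $P_1,P_2$ are both rung conduits, use \Cref{lem_negativerungs} to place them on the boundary of a $\flat$-disk, and then feed a free size-$1$ cusp into the last clause of \Cref{lem_diskproperties}. However, several of your intermediate verifications are off. Your argument for opposite signs is incorrect: $\hex_2$ is a top face of $\tet$, so its coorientation points \emph{out} of $\tet$, not in; and nothing in the hypothesis says $P_1$ is topmost or $P_2$ bottommost in its $N_\epsilon$-plate. You also treat both endpoints of $e$ symmetrically, whereas in fact only one of the two tips of $\tet$ at the ends of $e$ has the property that both $t_1$ and $t_2$ are rung conduits; at the other endpoint one of them is a ladderpole tip. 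The paper isolates this unique tip $\tri$ by a direct case check on the hinge tetrahedron (the content of \Cref{fig_hinge}), and this is where hingeness is actually used.

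The more substantive divergence is in how you reach ``free cusp of size $1$.'' You try to extract this from extremality of the hinge flat triangle in its fan via \Cref{lem_fans}, but that is not the mechanism: the fact that only $\tri$ sits between the two conduits is a consequence of $\hex_1,\hex_2$ being adjacent around $e$, not of extremality, and extremality by itself does not pin down which side the cusp curls or that it is free. The paper avoids this local analysis entirely. Once $t_1,t_2$ are known to be rung conduits of opposite sign lying on $\partial\delta$, the first part of \Cref{lem_diskproperties} (each $\flat$-disk has width two and exactly four rung conduits) \emph{forces} $t_1$ and $t_2$ to meet at a free cusp of size $1$; no direct verification of freeness or size is needed. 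This is the cleaner route and sidesteps exactly the ``careful local verification'' you flag as the main obstacle.
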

\begin{proof}
There is a unique tip $\tri$ of $\tet$ such that the corresponding tips $t_1$ and $t_2$ of $P_1$ and $P_2$ are both rung conduit segments (see \Cref{fig_hinge}). The signs of $t_1$ and $t_2$ are opposite, so  $t_1$ and $t_2$ must lie in the boundary of a $\flat$-disk by \Cref{lem_negativerungs}. Since each $\flat$-disk has width two and four rung conduit segments in its boundary, $t_1$ and $t_2$ must meet at a free cusp of size 1. Now we apply the last sentence of \Cref{lem_diskproperties}.
\end{proof}

\begin{figure}
\centering
\includegraphics[width=5in]{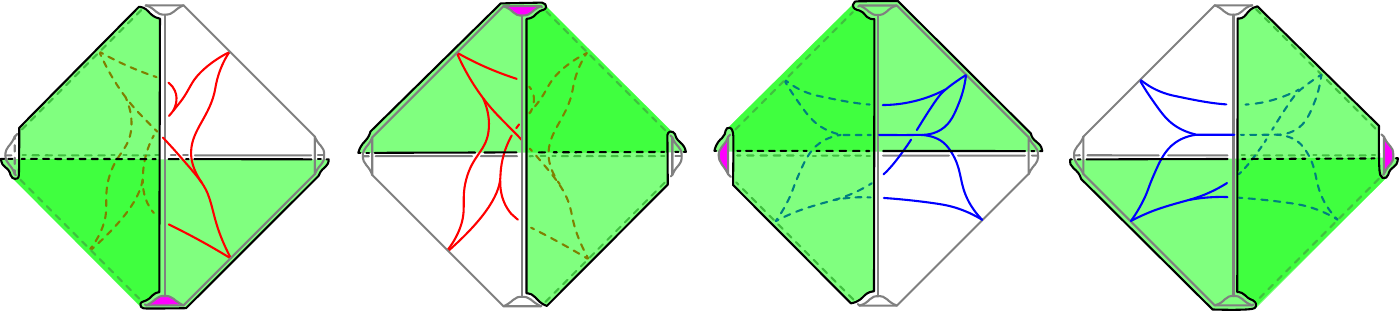}
\caption[]{Here we have drawn the same hinge tetrahedron $\tet$ four times, where the top $\tau$-edge of $\tet$ is right veering and the bottom $\tau$-edge is left veering. For any possible configuration of $P_1$ and $P_2$ as in the statement of \Cref{lem_nohingemoves}, we have colored the tip $\tri$ (using notation from the proof) of $\tet$ magenta. } 
\label{fig_hinge}
\end{figure}

\begin{lemma}\label{lem_residualbp}
Suppose that $S^\flat$ is a flattened surface with the bigon property and that $S^\flat$ has a $\flat$-disk with a free cusp of size 1. After performing the associated tetrahedron move, $S^\flat$ either still has the bigon property or is flat isotopic to a surface with smaller area.
\end{lemma}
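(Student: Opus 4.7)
The plan is to analyze the local effect of the tetrahedron move on $S^\flat\cap B^s$ and $S^\flat\cap B^u$ and verify, case by case, that either the four conditions defining the bigon property are preserved after the move (possibly after an area-nonincreasing cleanup), or an area-reducing flat isotopy becomes available. Let $\tet$ denote the $\tau$-tetrahedron swept across by the move. If $\tet$ is hinge, then the two plates of $S^\flat$ meeting at the free cusp are separated by a single rod with $\tet$ between them, and \Cref{lem_nohingemoves} directly supplies a flat isotopy decreasing the area. Thus we may assume $\tet$ is non-hinge.

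With $\tet$ non-hinge, \Cref{lem_fans} controls the fans incident to the bottom and top edges of $\tet$: at one cuspidal vertex of the relevant tip we see a short fan, and at the other we sit strictly in the interior of a long fan. Combined with \Cref{lem_traintrackfacts} and \Cref{lem_tipcondition}, this cuts down the post-move local picture of $S^\flat\cap B^s$ and $S^\flat\cap B^u$ near $\tet$ to a short finite list of configurations, which I would enumerate. The move replaces the two bottom plates of $\tet$ bordering the free cusp by the corresponding top plates and perturbs how $S^\flat$ meets the remaining tips of $\tet$; this is all the local data that needs to be tracked.

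In each case I verify the bigon property. Condition (a) (no nulhomologous component) is untouched by isotopy, and condition (b) (simple patches) can always be restored by \Cref{lem_simplepatches} without increasing area. The content is in conditions (c) and (d). For (c), any patch of positive index that the move might produce is necessarily supported on a $\flat$-disk of width at most one, so by \Cref{lem_diskproperties} together with a width-one or face move, the area strictly decreases. For (d), I would check that any negative switch of $S^\flat\cap B^s$ or $S^\flat\cap B^u$ created by the move lies in a bigon patch: the non-hinge hypothesis on $\tet$ rigidifies the coorientations along the affected tips, so that the only configuration of $S^\flat\cap B^s$ and $S^\flat\cap B^u$ consistent with the veering rule of \Cref{lem_traintrackfacts} is the bigon one.

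The main obstacle will be the bookkeeping in the case analysis: the move alters how $S^\flat$ sits in a neighborhood of $\tet$, and these changes propagate into $S^\flat\cap B^s$ and $S^\flat\cap B^u$ through the train tracks induced on the four $\tau$-faces of $\tet$ and their continuations into adjacent $\tau$-tetrahedra. The veering combinatorics developed in \Cref{sec:BSstruc} are what make this tractable; they ensure that in the non-hinge case, any ``bad'' switch or positive-index patch introduced by the move is immediately matched by an area-reducing flat isotopy of $S^\flat$.
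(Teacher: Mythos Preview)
Your approach is genuinely different from the paper's, and it is worth comparing the two. You propose a \emph{local} analysis: split on whether the swept tetrahedron $\tet$ is hinge, and in the non-hinge case enumerate the finitely many configurations of the train tracks on the faces of $\tet$, checking conditions (a)--(d) in each. The paper's proof is instead a short \emph{global} index-conservation argument that is completely insensitive to whether $\tet$ is hinge. The key observation you are missing is this: a tetrahedron move does not change the underlying topology of any patch of $S^\flat\cap B^s$ or $S^\flat\cap B^u$; it only shuffles switches between adjacent patches. Hence if after the move some meridional or topological-annulus patch $p$ acquires a negative switch, its index has strictly dropped. Since the sum of patch indices equals $2\chi(S^\flat)$ and is unchanged, some other patch $q$ has had its index increase. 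But before the move every non-bigon patch already had the minimal index allowed by its topology (this is part of the bigon property), so $q$ must be a nonmeridional disk patch that now has positive index. A monogon is impossible because a $\flat$-disk cannot have exactly one rung conduit (\Cref{lem_tipcondition}), so $q$ is a nullgon, i.e.\ a $\flat$-disk of width one, and a width-one move reduces area.

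Your proposal has two concrete gaps. First, the case analysis you describe is never carried out; you write ``a short finite list of configurations, which I would enumerate,'' but this enumeration is the entire content of your argument, and without it the verifications of (c) and (d) are assertions rather than proofs. In particular, your justification of (c) invokes \Cref{lem_diskproperties}, but that lemma assumes the bigon property, which is exactly what you are trying to establish post-move; and your justification of (d) (``the non-hinge hypothesis rigidifies the coorientations\ldots'') is too vague to be checkable. Second, your reduction in the hinge case via \Cref{lem_nohingemoves} is not obviously valid: that lemma requires two plates with a hinge tetrahedron immediately above one and below the other, and you have not verified that a free size-1 cusp at a hinge $\tet$ produces exactly that configuration. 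Even if it does, the hinge/non-hinge split is unnecessary once you have the index-conservation argument, which handles both uniformly in a few lines.
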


\begin{proof}
A tetrahedron move preserves the topology of $S^\flat$, and thus creates no nulhomologous components. It also preserves the underlying topology of patches of $S^\flat\cap B^s$ and $S^\flat \cap B^u$, and thus preserves the property of having simple patches. 
Accordingly, if $S^\flat$ does not have the bigon property after a tetrahedron move then it must be the case that either (i) the move creates a patch of positive index or (ii) the move creates a meridional disk patch or topological annulus patch of $S^\flat\cap B^s$ or $S^\flat \cap B^u$ with at least one negative switch.

If (i) occurs, note that the patch $p$ of positive index must be a nullgon or monogon. Since a $\flat$-disk cannot contain only one rung conduit, by \Cref{lem_tipcondition} $p$ must be a nullgon and correspond to a $\flat$-disk of width $1$. Now applying a width one move decreases the area of $S^\flat$.

If (ii) occurs, let $p$ be the meridional disk or annulus patch with a negative switch. We claim the index of $p$ is strictly less than it was prior to the tetrahedron move. Indeed:
\begin{itemize}
\item If $p$ is a meridional disk, then prior to the tetrahedron move the bigon property guaranteed that all its switches were positive. A meridional disk in a tube of $B^s$ or $B^u$ has maximal index over all such meridional disks exactly when its switches have identical sign, so the index of $p$ must have decreased.
\item If $p$ is an annulus, then as in the proof of \Cref{lem_bigontaut} the definition of the bigon property implies that prior to the tetrahedron move $p$ had no switches. Therefore the existence of a negative switch means that the index of $p$ must have decreased.
\end{itemize}

Since the sum of indices over all patches remains unchanged, the index of some patch $q$ must have increased. Since $S^\flat$ initially had the bigon property, the indices of meridional patches and annulus patches cannot have increased, so $q$ must be a nonmeridional disk patch. Therefore $q$ must have positive index,  so we are back in case (i).
\end{proof}

\subsection{The efficient bigon property}

Let $S^\flat$ be a flattened surface with the bigon property. There are limitations on the types of $\flat$-disks that $S^\flat$ can have, described in \Cref{lem_diskproperties}: the $\flat$-disks of $S^\flat$ all have width 2 and their boundaries contain four rungs. We wish to now simplify this picture by placing further restrictions on $\flat$-disks.

 If $\delta$ is a $\flat$-disk of $S^\flat$ such that negative conduits are unlinked from positive conduits in $\partial \delta$, we say $\delta$ is \textbf{efficient}. The obstruction to the efficiency of $\delta$ is the existence of a \textbf{kink} in $\partial\delta$, which is a portion $k$ of $\partial\delta$ containing only ladderpole conduits of the same sign $s$ such that the conduits to either side of $k$ are of the same sign $s'$ with $s\ne s'$. See \Cref{fig_kinkshadow}.

Let $\delta$ be a $\flat$-disk of $S^\flat$ and let $r_1$ and $r_2$ be two rung conduits of $\partial \delta$ contained in the same ladder $L$. If there is an oriented transversal to $\partial\hbs$ contained in $L\cap \delta$ starting at $r_1$ and ending at $r_2$, we say $r_1$ is a \textbf{lower rung} for $\partial\delta$ and $r_2$ is an \textbf{upper rung} for $\partial\delta$.
A kink $\kappa$ always connects two upper rungs or two lower rungs. If $\kappa$ connects two upper or lower rungs, we say $\kappa$ is an \textbf{upper} or \textbf{lower kink}, respectively.
The \textbf{length} of a kink $k$ is the number of conduits it contains. 

Let $\kappa$ be a kink in $\partial\delta$ for a $\flat$-disk $\delta$ of $S^\flat$. Then $\kappa$ determines a region of $\delta$ called the \textbf{shadow} of $\kappa$, which we now define. If $\kappa$ is an upper (lower) kink, let $L$ be downward (upward) ladder incident to $\kappa$. Let $v$ be the lowest (highest) junction traversed by $\kappa$. Let $r$ be the upper (lower) rung in $\partial\delta$ crossing $L$, and let $r'$ be the topmost (bottommost) rung in $L$ which is incident to $v$. The region in $L$ bounded above (below) by $r$ and below (above) by $r'$ is called the \textbf{shadow of $\kappa$} and denoted $\shad(\kappa)$.
See \Cref{fig_kinkshadow}.

\begin{figure}
\centering
\includegraphics[]{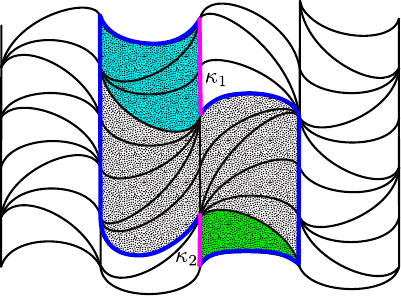}
\caption{An inefficient $\flat$-disk with 2 kinks shown in magenta: $\kappa_1$ is an upper kink of length 2 and $\kappa_2$ is a lower kink of length 1. In cyan we see $\shad(\kappa_1)$, and $\shad(\kappa_2)$ is shown in green.}
\label{fig_kinkshadow}
\end{figure}

\begin{definition}
Let $S^\flat$ be a flattened surface with the bigon property. We say that $S^\flat$ has the \textbf{efficient bigon property} if $\partial\mr S^\flat$ has no backtracking and every $\flat$-disk of $S^\flat$ is efficient.
\end{definition}

Our next goal is to show that by applying flat isotopies to decrease the area of $S^\flat$, we can assume that it has the efficient bigon property. 

\begin{lemma}\label{lem_shadow}
Let $S^\flat$ be a flattened surface with the bigon property. Let $\delta$ be an inefficient $\flat$-disk with kink $\kappa$. If the shadow of $\kappa$ contains a hinge flat triangle, then $S^\flat$ is flat isotopic to a surface with smaller area.
\end{lemma}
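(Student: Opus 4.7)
My plan is to use a sequence of tetrahedron moves to \emph{walk} the boundary of the inefficient $\flat$-disk $\delta$ downward through its shadow $\shad(\kappa)$ until the surface is positioned to invoke \Cref{lem_nohingemoves} at the hinge flat triangle $t_0$. Without loss of generality, $\kappa$ is an upper kink: it lies in a downward ladder $L$, its conduits share one sign, and the flanking conduits of $\partial\delta$ have the opposite sign. Let $r$ be the upper rung of $\partial\delta$ crossing $L$, and let $r'$ be the topmost rung of $L$ incident to the lowest junction $v$ of $\kappa$; by hypothesis the strip $\shad(\kappa)$ between $r$ and $r'$ contains a hinge flat triangle $t_0$.

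After eliminating backtracking and size-$0$ cusps via face moves (each of which strictly reduces area and completes the proof if it happens to be available), I may assume $\partial\delta$ has no backtracking and no size-$0$ cusps. I would then focus on the cusp $c_{\mathrm{top}}$ of $\delta$ at the endpoint of $\kappa$ nearer to $r$. Using \Cref{lem_diskproperties}, the sign pattern of $\kappa$ against its flanking conduits, and the structure of $L$ as a stack of flat triangles sharing rungs, the goal is to verify that $c_{\mathrm{top}}$ is a free cusp of size $1$ whose associated $\tau$-tetrahedron $\tet$ realizes the topmost flat triangle of $\shad(\kappa)$ as a tip. If $\tet$ is hinge, then two plates of $S^\flat$ sandwich $\tet$ with a single rod between them, and \Cref{lem_nohingemoves} delivers the required area reduction. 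Otherwise $\tet$ is non-hinge, and we perform the tetrahedron move at $c_{\mathrm{top}}$; by \Cref{lem_residualbp}, either the area strictly decreases (done) or the bigon property survives and the resulting $\flat$-disk still carries a kink whose shadow still contains $t_0$, since we have only peeled off the topmost, non-hinge flat triangle.

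Iterating this descent through the non-hinge flat triangles of the shrinking shadow from top to bottom, we must eventually encounter the hinge flat triangle $t_0$ at the top of the shadow and invoke \Cref{lem_nohingemoves}. The main obstacle is the inductive step: verifying at each stage that the cusp at the current top of the shadow is free, has size $1$, and that the tetrahedron move preserves both the bigon property and the kink structure of $\partial\delta$. This requires careful combinatorial bookkeeping leaning on \Cref{lem_diskproperties}, \Cref{lem_fans}, and \Cref{lem_residualbp}; the hypothesis that $\shad(\kappa)$ contains a hinge is precisely what guarantees the descent terminates at a hinge tetrahedron rather than exhausting $\shad(\kappa)$ without triggering \Cref{lem_nohingemoves}.
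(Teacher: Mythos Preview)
Your overall strategy—shrinking $\delta$ by tetrahedron moves until you are poised to sweep across the hinge flat triangle, then invoking \Cref{lem_nohingemoves}—is the same as the paper's. However, there is a genuine gap: you do not address the case where $\delta$ is not innermost, and this is exactly where your acknowledged ``main obstacle'' becomes a real obstruction rather than bookkeeping.

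When $\delta$ is not innermost there are nested $\flat$-disks of opposite coorientation inside it, and your cusp $c_{\mathrm{top}}$ need not be \emph{free}: conduits of these nested disks may intervene between the two conduits meeting at $c_{\mathrm{top}}$, so no tetrahedron move is available there. Your plan to ``verify the cusp is free and has size $1$'' therefore cannot be carried out as stated. The paper handles this by first passing to an inner $\flat$-disk that still contains the hinge triangle $h$ in the shadow of one of its kinks and is minimal with this property (no further nested disk contains $h$). One then shrinks this disk as in the innermost case, but now the tetrahedron moves may cause inner disks of the opposite coorientation to \emph{grow}; if any grows to contain $h$ one is already in the configuration of \Cref{lem_nohingemoves}, and otherwise $\partial\delta$ can eventually be swept across $h$. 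This argument for nested disks is the content missing from your proposal.

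A smaller point: even in the innermost case you fix attention on one specific cusp (at the endpoint of $\kappa$ nearer to $r$) and assert its associated tetrahedron is the topmost flat triangle of $\shad(\kappa)$, but it is not obvious this particular cusp has size exactly $1$. The paper is more agnostic here: it simply applies tetrahedron moves at whatever free size-$1$ cusps are available to shrink $\delta$ one flat triangle at a time, and observes that since $h$ lies in the shadow, one of these moves must eventually sweep across the hinge tetrahedron corresponding to $h$.
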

\begin{proof}
There is a portion of $\del\delta$ that, omitting junctions, can be expressed as a concatenation $(u,\kappa, v ,s)$ where $u$ and $v$ are rung conduits of the same sign, and there is a cusp at the junction of $v$ with $\kappa$ and $s$ ($s$ could be rung or ladderpole in this discussion). Let $L_u$ and $L_v$ be the ladders crossed by $u$ and $v$ respectively. We assume that $\kappa$ is an upper kink; the argument when $\kappa$ is a lower kink is symmetric. We have drawn this picture in \Cref{fig_efficient12}. 
\begin{figure}
\centering
\includegraphics[]{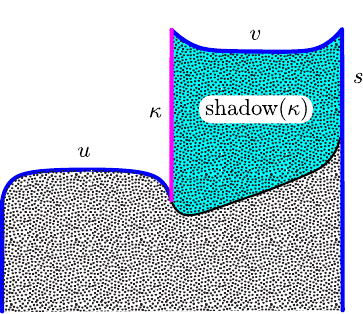}
\caption{Notation from \Cref{lem_shadow}.}
\label{fig_efficient12}
\end{figure}

Suppose that $\shad(\kappa)$ contains a hinge flat triangle $h$. 
If $\delta$ is innermost, by applying tetrahedron moves we can begin shrinking $A$ one flat triangle at a time. After each move, by \Cref{lem_residualbp} $S^\flat$ either has the bigon property or has reducible area. If all of these moves preserve the bigon property, we eventually arrive in a position where there is an available tetrahedron move sweeping $\partial \delta$ across $h$. By \Cref{lem_nohingemoves}, the area of $S^\flat$ can be reduced by flat isotopy.

If $\delta$ is not innermost, note that any $\flat$-disk containing $h$ and contained in $\delta$ must contain $h$ in the shadow of a kink. Hence by passing to a $\flat$-disk contained in $\delta$ we can assume that $\delta$ contains $h$ in the shadow of a kink and that no $\flat$-disks contained in $\delta$ contain $h$. Now we shrink $\delta$ as before, checking after each tetrahedron move to see if we have preserved the bigon property. In shrinking $\delta$, it is possible that $\flat$-disks of the opposite coorientation contained in $\delta$ may grow. If some such disk grows so that it contains $h$, then we are done by \Cref{lem_nohingemoves}. Otherwise we can eventually move $\partial \delta$ across $h$ and \Cref{lem_nohingemoves} again finishes the proof.
 \end{proof}

\begin{lemma}\label{lem_strongbigon}
Let $S^\flat$ be a flattened surface with the bigon property. If $S^\flat$ does not have the efficient bigon property, then $S^\flat$ is flat isotopic to a surface with smaller area.
\end{lemma}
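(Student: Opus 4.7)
The plan is to reduce \Cref{lem_strongbigon} to \Cref{lem_shadow} by producing, in every surface $S^\flat$ that fails the efficient bigon property, a kink whose shadow contains a hinge flat triangle.

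First I would handle backtracking: if $\partial\mr S^\flat$ has any backtracking, a face move strictly decreases the area of $S^\flat$, so assume $\partial\mr S^\flat$ has no backtracking. Then the failure of the efficient bigon property forces the existence of an inefficient $\flat$-disk $\delta$ containing at least one kink $\kappa$.

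Given a kink $\kappa$ with associated ladder $L$, distinguished junction $v$, and bounding rung $r'$, note that the rungs of $L$ incident to $v$ are precisely the edges of flat triangles in the fan at $v$ inside $L$ emanating from $v$. Since $r'$ is the topmost (resp.\ bottommost) such rung, it separates the fan at $v$ from the neighboring fan immediately above (resp.\ below) in $L$. By \Cref{lem_fans}, the flat triangle of that neighboring fan having $r'$ as its extremal edge is bottommost (resp.\ topmost) in its fan, hence hinge. Provided that the rung $r$ of $\partial\delta$ used in the definition of $\shad(\kappa)$ lies strictly above (resp.\ below) $r'$, this hinge flat triangle sits inside $\shad(\kappa)$, and \Cref{lem_shadow} yields a flat isotopy of $S^\flat$ strictly decreasing area.

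The remaining task is to arrange $r\ne r'$. The degenerate alternative $r=r'$ occurs only when the upper (resp.\ lower) rung of $\partial\delta$ in $L$ coincides with the topmost (resp.\ bottommost) rung at $v$, which pins down a very restricted local configuration. Using \Cref{lem_diskproperties} (which forces $\delta$ to have width two with exactly four rung conduits), together with the absence of backtracking and the symmetric analysis on the other ladder spanned by $\delta$, one argues that this configuration either produces another kink of $S^\flat$ in which $r\ne r'$ or else produces a free cusp of size one in $\partial\delta$, to which the last sentence of \Cref{lem_diskproperties} applies directly. I expect this combinatorial verification---checking that the simultaneous equality $r=r'$ for every kink in every $\flat$-disk forces a size-one free cusp somewhere---to be the main obstacle. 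It should follow from a short diagram chase based on the alternating fan structure of \Cref{lem_fans}, the coorientation constraints on signs of conduits forced by the bigon property, and the width-two constraint on $\delta$.
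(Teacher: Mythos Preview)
Your proposal has a genuine gap at the crucial step, and the approach does not recover.

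The sentence ``the flat triangle of that neighboring fan having $r'$ as its extremal edge is bottommost (resp.\ topmost) in its fan, hence hinge'' is a misreading of \Cref{lem_fans}. That lemma says a hinge triangle is extremal in each of its two fans; it does \emph{not} say that an extremal triangle is hinge. Indeed, a non-hinge flat triangle is the unique (hence both topmost and bottommost) member of a short fan at one of its cuspidal vertices. So the flat triangle sitting just above $r'$ can perfectly well be non-hinge, and more generally $\shad(\kappa)$ can consist entirely of non-hinge flat triangles. The paper's own proof explicitly begins from this situation (``By \Cref{lem_shadow}, we can assume $\shad(\kappa)$ contains only non-hinge flat triangles''), so this is not a degenerate corner case you can chase away; it is the generic case the argument must address. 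The ``$r=r'$'' issue you flag is therefore not the main obstacle---the main obstacle is that your reduction to \Cref{lem_shadow} is based on a false premise.

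What is actually needed is a mechanism for producing a hinge somewhere even when the current shadow has none. The paper does this by performing tetrahedron moves that shrink $\shad(\kappa)$; these moves transfer a kink to a neighboring $\flat$-disk $\delta'$, whose new shadow sits over the next $\tau$-tetrahedron along a particular $\Gamma$-path. Iterating, one traces out a branch line of $B^s$, and \Cref{lem_branchlinehinge} guarantees that branch line eventually passes through a hinge $\tau$-tetrahedron, at which point \Cref{lem_shadow} applies. This branch-line propagation (together with the bookkeeping needed when $\delta$ is not innermost) is the real content of the lemma, and your proposal does not contain it.
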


\begin{figure}
\centering
\includegraphics[]{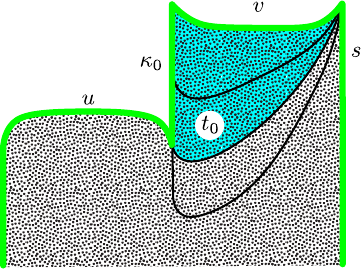}
\caption{Part of $\delta_0$ in the proof of \Cref{lem_strongbigon}.}
\label{fig_efficient3}
\end{figure}

\begin{figure}
\centering
\includegraphics[]{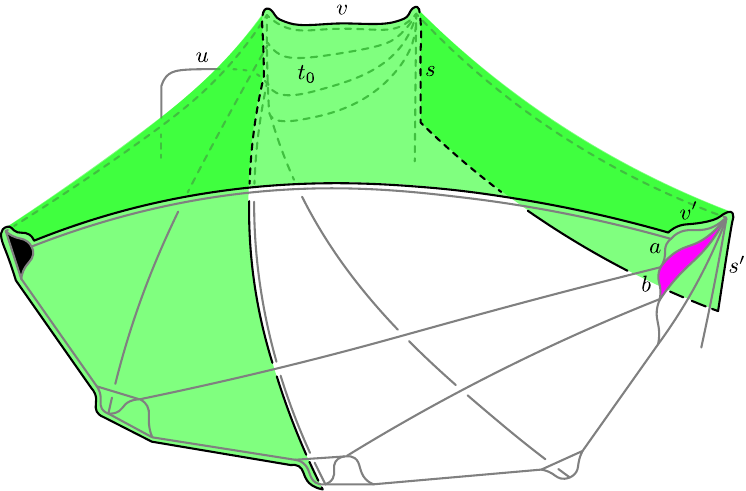}
\caption{Picture of the situation in the proof of \Cref{lem_strongbigon}. The pink flat triangle is $t_0'$.}
\label{fig_efficient34}
\end{figure}

\begin{figure}
\centering
\includegraphics[]{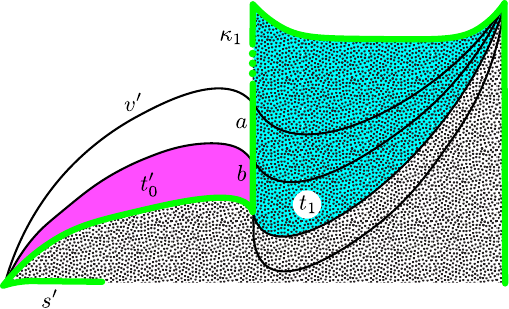}
\caption{Picture of part of $\delta_1$ in the proof of \Cref{lem_strongbigon} after eliminating $\kappa_0$, viewed from inside the manifold. The kink of $\delta_1$ shown is $\kappa_1$. The break in $\kappa_1$ is meant to indicate that the length of $\kappa_1$ is at least 2 and possibly more. This picture and its notation should be compared with \Cref{fig_efficient34}.}
\label{fig_efficient5}
\end{figure}

\begin{proof}
Suppose $\delta_0$ is an inefficient $\flat$-disk of $S^\flat$ with kink $\kappa_0$. By \Cref{lem_shadow}, we can assume $\shad(\kappa_0)$ contains only non-hinge flat triangles.

Let $u, v, s, L_u, L_v$ be defined as in the previous lemma, where $\delta=\delta_0$. We assume that $L_u$ is an upward ladder and $L_v$ is downward; the argument in the other case will be symmetric. 
We may assume the downward flat triangle in $L_v$ immediately below $\shad(\kappa_0)$ is non-hinge. For suppose otherwise; then after a sequence of tetrahedron moves there would be an available tetrahedron move across the hinge tetrahedron and we would be finished by \Cref{lem_nohingemoves}. We have drawn the situation in \Cref{fig_efficient3}, in the case where $\kappa$ has length 2.
Let $t_0$ be the lowest flat triangle in $\shad(\kappa)$, and let $\hex_0$ be the $\tau$-face corresponding to the top of $t_0$.

In \Cref{fig_efficient34}, we have drawn the tetrahedra corresponding to the three flat triangles shown in \Cref{fig_efficient3}. Let $a, b, v', s'$ be the conduits of 
$\del N_\epsilon$
as labeled in \Cref{fig_efficient34}, and let $t_0'$ be the tip colored pink in that figure. Note that $s'$ is a rung conduit since it is a 0-0 edge of an upward flat triangle. Also, since all the flat triangles in $\shad(\kappa)$ are non-hinge, $v'$ is a rung conduit. We conclude that the junction of $v'$ and $s'$ is a cusp of a $\flat$-disk $\delta_1$, by \Cref{lem_negativerungs}.
We may assume that the next conduit of $\partial\delta_1$ in the sequence $(s', v',\dots)$ is not $a$. If it were, then $S^\flat\cap \partial \mr M$ would wrap around 3 conduits of the leftmost flat triangle in \Cref{fig_efficient34} (colored black), and after a tetrahedron move $\partial \mr S^\flat$ would have backtracking so a face move would decrease the area of $S^\flat$.

For clarity, we first assume that $\delta_0$ is innermost. Note that innermostness of $\delta_0$ forces $\delta_1$ to also be innermost, since any $\flat$-disk contained in $\delta_1$ would have to avoid the ladder containing $t_0'$ and thus be width 1, which is impossible since $S^\flat$ has the bigon property.
We can perform tetrahedron moves to shrink $\shad(\kappa_0)$ until $\kappa_0$ no longer exists.  
These moves either introduce a new kink to $\delta_1$, or lengthen a kink on $\delta_1$ (by 2, in the case shown in \Cref{fig_efficient34}). We denote this next kink by $\kappa_1$. 

By \Cref{lem_shadow} we can assume $\shad(\kappa_1)$ consists only of non-hinge flat triangles, and by the same reasoning as above we can assume that the flat triangle immediately below $\shad(\kappa_1)$ is also non-hinge. We denote the bottommost flat triangle in $\shad(\kappa_1)$ by $t_1$; in \Cref{fig_efficient34} $t_1$ is the flat triangle into which the coorientation along $b$ points. We have also drawn a picture of $\delta_1$, using all this notation, in \Cref{fig_efficient5}. We let $\hex_1$ be the $\wt\tau$-face corresponding to the $\partial \hbs$-branch atop $t_1$. Note that $\hex_0$ is the $\tau$-face corresponding to the 0-$\pi$ edge of $t_0'$ which is a rung (see \Cref{fig_efficient34}), so a curve in $\hbs$ starting in $\hex_0$ and moving to $\hex_1$ passes from a bottommost face to a topmost face at $\hex_0\cap \hex_1$ (see \Cref{fig_efficient5}).

Now we may argue for $\delta_1$ exactly as we did for $\delta_0$, either reducing the area of $S^\flat$ by flat isotopy or producing another $\flat$-disk $\delta_2$ with kink $\kappa_2$. If no flat triangle in $\shad(\kappa_2)$ is hinge, we define $t_2$ and $\hex_2$ in the same way as $t_1$ and $\hex_1$ and run the argument again. Assume that we have run the argument $n$ times, and note that as above a curve $\gamma$ in $\hbs$ traversing the $\hex_0,\hex_1,...,\hex_n$ in that order always passes from a lowermost $\tau$-face to an uppermost $\tau$-face when traversing $\hex_i\cap \hex_{i+1}$. By \Cref{lem_branchlines}, the path in the dual graph $\Gamma$ corresponding pushdown of $\gamma$ is a stable branch line. By \Cref{lem_branchlinehinge} some $t_i$ will eventually be hinge, at which point we can apply \Cref{lem_shadow} to reduce the area of $S^\flat$.

Now suppose that $\delta_0$ is non-innermost. By passing to a $\flat$-disk contained in $\delta_0$ and relabeling, we can assume $\delta_0$ is innermost in $\shad(\kappa_0)$, meaning that $\delta_0$ intersects $\shad(\kappa_0)$ and does not contain any $\flat$-disks intersecting $\shad(\kappa_0)$. Note that even if we pass to an inner $\flat$-disk, we do not relabel $\kappa_0$ or $t_0$.

We shrink $\delta_0$ by tetrahedron moves until the first time that either $\delta_0$ or a $\flat$-disk contained in $\delta_0$ has an free size 1 cusp at $t_0$. Then as in the innermost case above, after performing a tetrahedron move on this cusp we have produced an inefficient $\flat$-disk $\delta_1$ with a kink $\kappa_1$. As before we may assume that $\shad(\kappa_1)$ contains no hinge flat triangles and that the flat triangle immediately below $\shad(\kappa_1)$ is non-hinge. We again define $t_1$ to be the lowest flat triangle in $\shad(\kappa_1)$. We now iterate this process. As before, since branch lines always pass through hinge tetrahedra we eventually reach an inefficient $\delta_i$ with $\shad(\kappa_i)$ containing a hinge flat triangle, at which point we are finally done. 
\end{proof}


\section{Finishing the proof of Theorems \ref{thma} and \ref{thmb}}\label{sec_negregion}

\subsection{Efficient $\flat$-disks}

Let $S^\flat$ be a flattened surface.
Let $r$ be a mixed rod of $S^\flat$ which is incident to a positive plate $P$ and a negative plate $Q$ and corresponds to a $\tau$-edge $e$. If $P$ lies above $Q$ at $e$ with respect to the coorientation on $\hbs$ then we say $S^\flat$ \textbf{curls down at $\boldsymbol{r}$} and otherwise we say $S^\flat$ \textbf{curls up at $\boldsymbol{r}$}. 
\begin{lemma}\label{lem_curling}
Let $S^\flat$ be a flattened surface such that $\partial \mr S^\flat$ has no backtracking, and let $Q$ be a negative plate of $S^\flat$ incident to a rod $r$. If the switch of $Q\cap B^s$ points toward $r$, then $S^\flat$ does not curl down at $r$. If the switch of $P\cap B^u$ points toward $r$, then $S^\flat$ does not curl up at $r$.
\end{lemma}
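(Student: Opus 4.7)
The plan is to argue by contradiction. Suppose $S^\flat$ curls down at $r$, so that $P$ lies above $Q$ at $e$ with respect to the coorientation on $\hbs$; the goal is to derive a backtracking of $\partial \mr S^\flat$, contradicting the hypothesis.

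First, I would translate the switch hypothesis into a strong combinatorial constraint. By the defining property of $B^s$ recalled in \Cref{sec:BSstruc}, the switch of $Q\cap B^s$ pointing toward $r$ forces the face containing $Q$ to be a bottom face of the tetrahedron $\tet$ whose bottom edge is $e$. Since $\tet$ lies on the side of $e$ toward which the coorientation of $\hbs$ points, the assumption ``$P$ above $Q$ at $e$'' should pin down $r$ to lie inside $\tet$'s $\pi$-wedge at $e$, with $P$ lying in the other bottom face of $\tet$. The same defining property applied to $P$'s face then implies $P\cap B^s$ also has its switch pointing toward $r$.

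Second, I would examine the boundary picture at each endpoint $v$ of $e$. The tip of $\tet$ at $v$ is an upward flat triangle with $v$ as its noncuspidal vertex, so the tips of $Q$'s and $P$'s containing faces at $v$ are the two $0$-$\pi$ edges meeting at $v$, and together they form a single smooth branch of $\partial\hbs$ through $v$. The junction segment of $r$ at $v$ thus bridges the negative conduit segment of $Q$ and the positive conduit segment of $P$ across the junction disk, from within $\tet$'s wedge.

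The final step is to produce the backtracking. The intended mechanism is that, once the mixed junction segment of $r$ is fixed across the noncuspidal vertex $v$, the sign pattern of $\partial \mr S^\flat$ on the adjacent conduit segments --- negative from $Q$, positive from $P$, lying on opposite sides of $v$ along a single smooth branch of $\partial\hbs$ --- forces $\partial \mr S^\flat$, after continuing past $v$, to re-enter one of these conduits with reversed sign, yielding a backtracking. The main obstacle, which is where I expect the heart of the work to lie, is handling the ways $\partial \mr S^\flat$ could ``escape'' via additional rods attached to the same conduits; here I anticipate using the veering data from \Cref{lem_tipcondition}, which constrains which tips are rungs or ladderpoles of $\partial\hbs$, to rule out the escape routes and close the argument. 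The second claim of the lemma then follows by the symmetric argument with $B^s$, $\tet$, and bottom edge replaced by $B^u$, the tetrahedron for which $e$ is the top edge, and top edge, respectively.
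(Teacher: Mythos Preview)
Your second step contains the real error. You assert that curling down forces the rod $r$ to lie in $\tet$'s $\pi$-wedge with $P$ landing in the other bottom face of $\tet$, but this is incompatible with $r$ being a \emph{mixed} rod. The two bottom faces of $\tet$ lie on opposite sides of the branching at $e$ (they are smoothly joined there), and a sheet of $S^\flat$ passing from one side of $e$ to the other through $R_e$ does so without a U-turn, so its coorientation relative to the vertical foliation does not flip: if $Q$ is negative then the plate reached on the far side is also negative. A mixed rod is precisely a U-turn in $R_e$, and a U-turn necessarily returns to a plate on the \emph{same} side of $e$.

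Once you see this, the lemma is immediate, and this is all the paper's proof says. Since $r$ is mixed, $P$ lies in a face on the same side of $e$ as $\hex$. The switch hypothesis says exactly that $e$ is the bottom edge of the tetrahedron for which $\hex$ is a bottom face, so $\hex$ is the topmost face on its side of $e$. Hence if $P$ lies above $Q$ there is nowhere for $P$ to go except into $P_\hex$ itself, directly above $Q$; at a tip of $\hex$ this produces a positive and a negative conduit segment in the same conduit joined by a junction---backtracking. Your boundary analysis in steps three and four, including the appeal to \Cref{lem_tipcondition}, is aimed at a configuration that cannot occur, which is why you found no clean way to close it.
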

\begin{proof}
Let $\hex$ be the $\tau$-face corresponding to $Q$, and let $e$ be the $\tau$-edge corresponding to $r$. If the switch of $P\cap B^s$ points toward $r$, then $e$ is the bottom $\tau$-edge of the $\tau$-tetrahedron for which $\hex$ is a bottom $\tau$-face, so $S^\flat$ cannot curl down at $r$ without backtracking. The other case is symmetric.
\end{proof}

We now develop some terminology to discuss efficient $\flat$-disks. An efficient $\flat$-disk $\delta$ has 2 cusps whose complement in $\del\delta$ has two components which we will distinguish in two different ways: top/bottom and positive/negative. The \textbf{top (bottom)} of $\delta$ is the component of $\partial\delta$ along which the coorientation of $\del\tau$ points out of (into) $\delta$.

One of these pieces contains only negative conduit segments and the other contains only positive conduit segments. We call the piece containing the negative conduit segments the \textbf{negative boundary} of $\delta$, denoted $\del\delta_-$. 
Note that the bottom of an outward $\flat$-disk is negative, and the top of an inward $\flat$-disk is negative.

Using this new language, we record a lemma describing the curling of $S^\flat$ at the boundary of $\Neg(S^\flat)$, which as a reminder is the union of all negative plates and rods of $S^\flat$.

\begin{lemma}\label{lem_consistentcurling}
Let $S^\flat$ be a surface with the efficient bigon property. Let $\gamma$ be a component of $\Neg(S^\flat)\cap \partial \mr M$ which is homeomorphic to an interval and contains a rung conduit segment. Let $r_1$ and $r_2$ be the rods of $S^\flat$ corresponding to the junctions incident to the endpoints of $\gamma$. Then $S^\flat$ either curls up at both $r_i$ or down at both $r_i$.
\end{lemma}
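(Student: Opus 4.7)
The plan is to translate the curling at $r_1$ and $r_2$ into the relative position of certain conduits on the respective junctions, and to propagate this condition along $\gamma$ using the consistent coorientation of $\mr S^\flat$. At a mixed rod $r_i$ with positive plate $P_i$ and negative plate $Q_i$ incident to the $\tau$-edge $e_i$, unpacking the definition shows that $S^\flat$ curls down at $r_i$ precisely when the positive conduit adjacent to the mixed junction at $r_i$ lies above the negative conduit there, measured in the local vertical direction at the endpoint $v_i$ of $e_i$ coming from the coorientation of $\hbs$ at $e_i$.

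I would orient $\gamma$ from $r_1$ to $r_2$ and analyze the tangential-to-$\partial \mr M$ component of the coorientation of $\mr S^\flat$ along $\gamma$. Since $\gamma$ is contained in negative conduits and negative junctions by the definition of $\Neg$, this tangential component points consistently opposite to the vertical foliation direction along $\gamma$. The hypothesis that $\gamma$ contains a rung conduit segment, combined with the absence of backtracking in $\partial \mr S^\flat$ (from the efficient bigon property) and \Cref{lem_tipcondition}, constrains which patches of $S^\flat \cap B^s$ and $S^\flat \cap B^u$ the boundary traverses; in particular, I want to conclude that at each interior junction traversed by $\gamma$, the two adjacent negative conduits of $\gamma$ sit coherently on the same ``side'' of the junction relative to the vertical foliation direction.

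At each endpoint $r_i$, the transition from $\gamma$ (negative) to the adjacent positive conduit at the mixed junction is forced by the coorientation behavior: in a positive plate the coorientation of $\mr S^\flat$ agrees with the vertical foliation, so the positive conduit must lie on the side of the mixed junction toward which the vertical foliation from the negative conduit points. Combining with the side-consistency along $\gamma$, the relative position of the positive and negative conduits on the junction at $r_1$ agrees with that on the junction at $r_2$, so the curling is the same at $r_1$ and $r_2$. The hard part will be making precise the local analysis at interior junctions: one must verify, using the veering combinatorics (\Cref{lem_fans} and \Cref{lem_traintrackfacts}) and the no-backtracking hypothesis, that the two negative conduits of $\gamma$ at a switch of $\partial \hbs$ are on the same side rather than on opposite sides of the switch; this is where the rung conduit hypothesis plays its essential role, preventing the configurations in which a ladderpole-only $\gamma$ could drift between sides.
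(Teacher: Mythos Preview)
Your approach misses the key observation that makes this lemma nearly immediate, and the ``hard part'' you flag is a symptom of that omission rather than a genuine obstacle.

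The paper's proof is two lines: since $\gamma$ contains a negative rung conduit, \Cref{lem_negativerungs} says the component of $\partial\mr S^\flat$ containing $\gamma$ bounds a $\flat$-disk $\delta$. Because $S^\flat$ has the \emph{efficient} bigon property, the negative and positive conduits of $\partial\delta$ are unlinked, so $\gamma$ is precisely the negative boundary $\delta^-$ and the junctions at its two endpoints are exactly the two cusps of $\delta$. Now the dichotomy is simply whether $\delta$ is outward or inward: if outward, $\delta^-$ is the bottom of $\delta$ and $\delta^+$ is the top, forcing $S^\flat$ to curl down at both $r_i$; if inward, the reverse.

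Your proposed ``side-consistency'' propagation through interior (negative) junctions of $\gamma$ is not well-defined as stated: at a negative junction both adjacent conduits are negative, and there is no clear ``side'' datum to carry forward that determines the relative height of a positive plate you have not yet reached. What you are really trying to track is whether the coorientation of $\partial\hbs$ points into or out of the region $\delta$ along $\gamma$---but once you phrase it that way you have rediscovered the top/bottom distinction of an efficient $\flat$-disk, and the argument collapses to the paper's. In particular, no appeal to \Cref{lem_fans} or \Cref{lem_traintrackfacts} is needed. The rung hypothesis is not used to prevent ``drifting between sides''; it is used, via \Cref{lem_negativerungs}, to guarantee that $\gamma$ sits on a $\flat$-disk boundary in the first place.
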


\begin{proof}
Because $S^\flat$ has the efficient bigon property, $\gamma$ is equal to the negative boundary  of some $\flat$-disk $\delta$. If $\delta$ is an outward $\flat$-disk, then $\gamma$ is equal to the bottom of $\delta$ and $S^\flat$ must curl downward at both $r_i$. Symmetrically, if $\delta$ is inward then $\gamma$ is the top of $\delta$ and $S^\flat$ must curl upward at both $r_i$.
\end{proof}

\subsection{The excellent bigon property}

Let $S^\flat$ be a flattened surface with with the efficient bigon property. 
Note that $\Neg(S^\flat)$ naturally has the structure of a surface with corners. There are 2 corners for each interval component of $\Neg(S^\flat)\cap \partial U$, and we can smooth the boundary of $\Neg(S^\flat)$ by extending $\Neg(S^\flat)$ slightly into $U$ between each pair of corners as in \Cref{fig_rounding}.
Denote this smoothing of $\Neg(S^\flat)$ by $N$. The reason for forming $N$ is that we will be interested in the indices of patches of certain train tracks on $\Neg(S^\flat)$ where we ignore the corners of $\Neg(S^\flat)$ but include the corners corresponding to train track stops.

\begin{figure}
\centering
\includegraphics[]{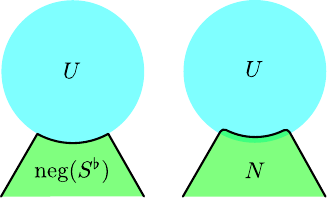}
\caption{$N$ is formed by rounding the corners of $\Neg(S^\flat)\cap \partial U$.}
\label{fig_rounding}
\end{figure}

\begin{lemma}\label{lem_nonegindex}
No patch of $N\cap B^s$ or $N\cap B^u$ has negative index. If $p$ is an annulus patch of $N\cap B^s$ or $N\cap B^u$, then $p$ has no switches and $p\cap \partial \mr S^\flat$ contains only ladderpole conduit segments.
\end{lemma}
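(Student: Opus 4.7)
The plan is to analyze patches of $N \cap B^s$; the argument for $B^u$ is symmetric. Since $B^s$ is disjoint from $U$ and hence from the rounded collar of $N$ inside $U$, the train track $N \cap B^s$ is just the restriction of $S^\flat \cap B^s$ to $N$, and every patch $q$ of $N \cap B^s$ is a connected component of $P \cap N$ for some unique patch $P$ of $S^\flat \cap B^s$. The switches of $q$ are exactly the negative switches of $P$, and by the efficient bigon property such switches appear only in bigon patches. Stops of the restricted train track on $\partial N$ lie only on mixed rod arcs, with exactly one stop per mixed rod arc (coming from the unique train-track branch in the adjacent negative plate directed into the rod).

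For the nonnegative index claim I would split into cases. If $P$ is not a bigon, then $q$ has no switches and the required bound becomes $2\chi(q) \geq c(q)/2$, where $c(q)$ counts corners coming from stops on $\partial N \cap P$. Here Lemma \ref{lem_curling} is the crucial tool: it controls at each mixed rod which train-track branch in the adjacent negative plate is actually directed into the rod, and this combined with Lemma \ref{lem_consistentcurling} pins down the crossings on $\partial N \cap P$ tightly enough that the corner count on each component of $P \cap N$ is absorbed by its Euler characteristic. If $P$ is a bigon, then $\text{ind}(P)=0$ and $P$ is a disk. Cutting $P$ by the arcs of $\partial N \cap P$ (which are properly embedded in $\mr S^\flat$ with endpoints on $\partial \mr S^\flat$) increases $2\chi$ by $2$ per arc while adding at most two corners; a direct tally then shows every component $q$ inherits $\text{ind}(q)\ge 0$. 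The only subtle point is ruling out the configuration in which a single component receives both switches of $P$ together with too many corners on its bounding arcs; Lemma \ref{lem_curling} applied to the mixed rods flanking that component eliminates this possibility.

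For the second statement, an annulus patch $p$ has no switches and no corners by definition, so the first conclusion is immediate; the nontrivial content is the ladderpole condition. I would argue by contradiction: if $p\cap \partial\mr S^\flat$ contained a rung conduit segment $\gamma$, then by Lemma \ref{lem_tipcondition} the tip of $\tau^{(2)}$ traversed by $\gamma$ forces a switch in either $S^\flat\cap B^s$ or $S^\flat\cap B^u$ in the adjacent plate. If this switch is in $S^\flat\cap B^s$, then it is a switch of $p$, contradicting the annulus condition. If it is in $S^\flat\cap B^u$, then applying the first statement to $N\cap B^u$ and invoking Lemma \ref{lem_negativerungs} (which pairs negative rung conduits with $\flat$-disks on both branched surfaces) produces a corner on $\partial p$, again a contradiction.

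The main obstacle I anticipate is the corner bookkeeping in the non-bigon case. It should reduce to a checklist: each negative plate of $N$ contributes at most one stop per mixed rod on its boundary, and Lemma \ref{lem_curling} forbids the ``one branch per rod'' configurations that would push any component's corner count beyond what its Euler characteristic can absorb. I expect the remaining computation to be a bounded enumeration organized by whether the $\tau$-tetrahedra incident to the relevant mixed rods are hinge or non-hinge.
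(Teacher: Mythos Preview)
Your proposal has a genuine gap: you are missing the key structural observation that makes this lemma short. Each patch of $N\cap B^s$ is either a topological annulus with no corners or a topological disk with exactly two corners. This is because inside each tube of $B^s$ the pair $(\tau^{(2)},N_\epsilon)$ has a product structure over $\partial\hbs$, so a patch of $N\cap B^s$ is precisely the piece of $N$ sitting over a single maximal negative arc (or full negative circle) of one component of $\partial\mr S^\flat$; its only corners are at the two mixed rods terminating that arc. With this in hand, the index computation is immediate, and the paper's argument runs as follows: a disk patch with two corners and negative index has at least two switches, hence at least two negative rung conduits crossing upward ladders; by \Cref{lem_negativerungs} these lie on a $\flat$-disk boundary, which must then also carry at least two \emph{positive} such rung conduits, giving at least four switches on the containing bigon of $S^\flat\cap B^s$ and contradicting the bigon property. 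For a topological annulus patch, its intersection with $\partial\mr S^\flat$ is a closed curve of purely negative conduits; a rung among them would (again by \Cref{lem_negativerungs}) make the curve a $\flat$-disk boundary, which necessarily contains positive conduits---contradiction---so all conduits are ladderpole and the patch has no switches.

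By contrast, your plan treats the corner count as an unknown to be bounded via \Cref{lem_curling} and \Cref{lem_consistentcurling}, culminating in a hinge/non-hinge enumeration. Those lemmas control the \emph{direction} of curling at mixed rods, not the number of corners a patch sees, and you never explain how to convert that control into the inequality $2\chi(q)\ge c(q)/2$; without the two-corner fact there is no reason a component of $P\cap N$ could not have, say, six corners. Your bigon case likewise relies on a cutting argument (``increases $2\chi$ by $2$ per arc while adding at most two corners'') that does not by itself prevent both switches of $P$ from landing in a single component with four corners, and your proposed fix via \Cref{lem_curling} is not spelled out. Finally, your argument for the ladderpole condition on annulus patches routes through \Cref{lem_tipcondition} and an unjustified step (``produces a corner on $\partial p$''); the direct route is to note that a closed curve of negative conduits cannot bound a $\flat$-disk, hence contains no rung conduit at all.
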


\begin{proof}
Let $N^s= N\cap B^s$. Each component of $N\cap \del \mr M$ is either a circle or interval. If $c$ is such a component, then the fact that each $\tau$-face contains a single switch of $\hbs\cap B^s$ onto which it deformation retracts implies that the patch of $N^s$ containing $c$ is homeomorphic to $c\times [0,1]$. Thus, each patch of $N^s$ is either an annulus or a disk. A patch of $N^s$ has corners at exactly those points corresponding to intersections of $N^s$ with $\del N$, i.e. the stops of $N^s$. An annulus patch therefore contains no corners, and a disk patch contains two corners.

Suppose some patch $p$ of $N^s$ has negative index; as a first case suppose that $p$ is a topological disk with two corners. Then $p$ must have at least two switches, whence $p\cap \partial \mr S^\flat$ is a curve containing at least two negative rung conduit segments traversing upward ladders. By \Cref{lem_negativerungs}, $p\cap \partial \mr S^\flat$ is part of the boundary of a $\flat$-disk $\delta$. Note that $\partial \delta$ must have at least two positive rung conduit segments crossing upward ladders, forcing the corresponding nonmeridional disk patch of $S^\flat \cap B^s$ to have at least four switches, contradicting the stable bigon property.

Suppose  now that $p$ is a topological annulus. Then $p\cap \partial \mr S^\flat$ is a closed curve in $\partial N_\epsilon$ containing only negative conduits. A component of $\partial \mr S^\flat$ contains a negative rung conduit segment iff it is the boundary of a $\flat$-disk by \Cref{lem_negativerungs}, and the boundary of a $\flat$-disk must contain positive conduit segments. Hence $p\cap \partial \mr S^\flat$ must contain only ladderpole conduits, so $p$ has no switches by \Cref{lem_tipcondition}.

The same analysis holds symmetrically for $N\cap B^u$.
\end{proof}

\begin{lemma}\label{lem_onlyannuli}
Each component of $N$ is an annulus. 
\end{lemma}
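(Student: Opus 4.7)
My plan is to apply the index formula \Cref{indexeq} to a component $C$ of $N$ using the train track $C\cap B^s$, which gives $2\chi(C) = \sum_p \ind(p)$. By \Cref{lem_nonegindex} every summand is nonnegative, so $\chi(C) \ge 0$. Since each component of $N$ is orientable, it then suffices to exclude closed components and disk components to conclude that $C$ must be an annulus.

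For the closed case, a closed component $C$ of $N$ meets no mixed rod and no part of $\partial \mr M$, so it is an entire component of $S^\flat$. Condition (a) of the bigon property forbids $[C] = 0$, while $\chi(C) \ge 0$ together with orientability leaves $C \in \{S^2, T^2\}$. A sphere is immediately impossible since $M$ is irreducible (\Cref{lem_irreducible}). If $C$ is a torus, then reversing its orientation produces a surface carried by $\tau^{(2)}$, so \Cref{lem_ifcarriedthentaut} yields $x([C]) = -\chi(C) = 0$. However, because $M$ is closed, irreducible, and atoroidal (\Cref{lem_atoroidal}), every embedded torus in $M$ bounds a solid torus (compress along a disk and cap off the resulting sphere with a ball) and is therefore nulhomologous; hence $x$ is a genuine norm on $H_2(M)$, and $[C] = 0$, contradicting the bigon property.

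For the disk case, suppose for contradiction that $C$ is a disk, so $\chi(C) = 1$ and the index sum equals $2$. The nonnegative summands available from \Cref{lem_nonegindex} are index-$0$ annulus patches together with index-$0$ or index-$1$ two-corner disk patches, forcing $C\cap B^s$ to be a very restricted train track. I would then analyze the curve $\partial C$, which after smoothing is a closed curve alternating between mixed-rod arcs in $\intr(\mr M)$ and negative-conduit intervals on $\partial \mr M$, combining \Cref{lem_negativerungs} (negative rungs lie only on $\flat$-disk boundaries), \Cref{lem_diskproperties} (each $\flat$-disk has width two and exactly four rung conduits), \Cref{lem_consistentcurling} (consistent curling at the endpoints of negative-conduit intervals), and the $\ge 3$ prongs hypothesis to show no such curve can bound a disk in $M$.

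The main obstacle is exactly this disk step: the index count alone leaves room for small low-complexity disk configurations, and ruling them out seems to require combining the prongs condition with the ladderpole-only boundary structure of annular patches from \Cref{lem_nonegindex} and the efficient bigon property, whereas the closed case reduces in a few lines to irreducibility, atoroidality, and \Cref{lem_ifcarriedthentaut}.
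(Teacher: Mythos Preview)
Your proposal is incomplete: you explicitly leave the disk case as an unresolved ``main obstacle,'' and the tools you list for it (\Cref{lem_diskproperties}, the $\ge 3$ prongs hypothesis) are not what the paper uses and do not obviously yield a contradiction. Since the disk case is the entire content of the lemma, this is a genuine gap rather than a minor omission.

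Two further remarks. First, your closed-case analysis, while correct, is unnecessary: every negative plate has three tips meeting $\partial\mr M$, and after the smoothing in \Cref{fig_rounding} these tip-intervals still contribute to $\partial N$. Hence $N$ always has nonempty boundary and no closed component can occur. Second, the paper's actual disk argument is quite different from the sketch you propose. Rather than analyzing the curve $\partial D$ against the $\flat$-disks of $S^\flat$, the paper studies the internal structure of $D^s=D\cap B^s$ and $D^u=D\cap B^u$. It introduces a graph $G_D$ with one vertex per negative plate in $D$ and one edge per negative rod; since $D$ is a disk, $G_D$ is a tree with at least two leaves. Each leaf corresponds to a plate giving a ``small'' patch of index $1$; since the patch indices lie in $\{0,1\}$ and sum to $2$, there are exactly two index-$1$ patches (for each of $D^s$ and $D^u$) and the rest have index $0$. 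This forces exactly two components $\gamma_1,\gamma_2$ of $D\cap\partial\mr M$ to be single conduits, and divides the remaining boundary of $D\cap\Neg(S^\flat)$ into two arcs $A$ and $B$. One then uses \Cref{lem_curling} and \Cref{lem_consistentcurling} to pin down the switch directions at the two leaf plates, showing all $D^s$-switches point toward one side (say $A$). Deleting the $D^s$-branches meeting $B$ then exposes a patch with two corners and at least two switches, i.e.\ index $\le -1$, contradicting \Cref{lem_nonegindex}. This switch-direction/branch-deletion mechanism is the missing idea in your proposal.
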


\begin{proof}
We show that no component of $N$ is a disk. It then follows from \Cref{lem_nonegindex} that each component of $N$ is an annulus.

Suppose for a contradiction that $D$ is a disk component of $N$. Let $D^s=D\cap B^s$ and $D^u=D\cap B^u$. Note that each patch of $D^s$ or $D^u$ is topologically a disk and has two corners, so has index equal to $1-\#\{\text{switches of the patch}\}$.

Within this proof we call a patch of $D^s$ or $D^u$ \textbf{small} if it intersects a single negative plate of $S^\flat$. If a patch is not small we will call it \textbf{large}. Let $p$ be a small patch of $D^s$. If $p$ has a switch, then $p\cap \partial \mr M$ is a negative conduit in the boundary of a $\flat$-disk $\delta$. By the efficient bigon property $\partial \delta$ must have at least two negative conduits and they must be unlinked from the positive conduits in $\partial \delta$. This is a contradiction since $p$ is small. Hence $p$ must have no switches, and we conclude $\ind(p)=1$. It follows that $D$ must contain more than one negative plate of $S^\flat$.

Note that each patch of $D^s$ can have at most one switch, for a patch of $D^s$ with more than one switch would imply that there is a $\flat$-disk of $S^\flat$ with two negative ladderpole conduit segments, contradicting the the bigon property. Hence 0 and 1 are the only possible indices of patches of $D^s$. We can define a graph $G_D$ in $D$ with a vertex in the middle of each negative plate, and an edge connecting two vertices exactly when those two plates are connected by a negative rod. Since $D$ is a disk, $G_D$ is a connected tree, hence has at least two leaves (vertices of valence 1). Each leaf must lie interior to a negative plate which abuts two mixed rods and one negative rod. It follows that each of these plates intersects a small patch of $D^s$ and hence $D^s$ has at least two small patches. Since each small patch has index 1 by the above, and the sum of all the indices of $D^s$ is 2, $D^s$ must have exactly two small patches with index 1 and all other patches must have index 0. The same analysis holds for $D^u$. Hence there are two components of $D\cap \partial \mr M$ consisting of a single conduit segment. Call these $\gamma_1$ and $\gamma_2$. There are two components of $\del(D\cap \Neg(S^\flat))-
(\gamma_1\cup\gamma_2)$ (note that $D\cap \Neg(S^\flat)$ is just $D$ before rounding), which we call $A$ and $B$. The situation is as in \Cref{fig_diskexample}.
\begin{figure}
\centering
\includegraphics[]{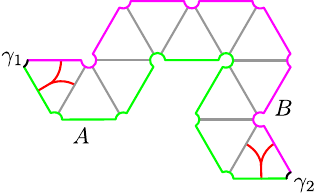}
\caption{$D\cap \Neg(S^\flat)$ in the argument for \Cref{lem_onlyannuli}.}
\label{fig_diskexample}
\end{figure}

Let $\hex_i$ be the plate of $D$ incident to $\gamma_i$ for $i=1,2$. Note that the switch of $\hex_1\cap B^s$ must point toward either $A$ or $B$ since the patch incident to $\gamma_1$ has index 1; suppose without loss of generality that it points toward $A$. This forces the switch of $\hex_1\cap B^u$ to point toward $B$. By \Cref{lem_curling}, $S^\flat$ must curl up at $\hex_1\cap A$ and down at $\hex_1\cap B$.

Applying \Cref{lem_consistentcurling} repeatedly, we see that the curling of $S^\flat$ must be consistent at all rods meeting $A$ and at all rods meeting $B$. We conclude that no switches of $D^s$ point toward $B$ and in particular that the switch of $\hex_2\cap B^s$ points toward $A$, as in \Cref{fig_diskexample}. 

It follows that we may delete all branches of $D^s$ which meet $B-(\hex_1\cup \hex_2)$ and what remains will be another train track $(D^s)'$. By the above analysis there is a unique patch $p'$ of $(D^s)'$ meeting $B-(\hex_1\cup \hex_2)$ and it has two corners and at least two switches, hence has index $\le -1$. This forces some patch of $D^s$ to have negative index, a contradiction. We conclude that no component of $N$ is a disk.
\end{proof}

\begin{lemma}\label{lem_minimalannuli}
Each plate in $\Neg(S^\flat)$ is incident to a mixed rod.
\end{lemma}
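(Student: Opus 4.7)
Suppose for contradiction that $P\in\Neg(S^\flat)$ has no mixed rod incident to it. Then all three rods of $P$ are negative, so $P$ has valence $3$ in the graph $G_A$ whose vertices are the negative plates of $A$ and whose edges are the negative rods, where $A$ is the component of $N$ containing $P$. By \Cref{lem_onlyannuli}, $A$ is an annulus, so $\chi(A)=0$, and by \Cref{lem_nonegindex} together with Equation~\eqref{indexeq} every patch of $A\cap B^s$ has index exactly $0$. A short Euler computation, counting rod-sides of plates (each plate has three, each negative rod uses two, each mixed rod uses one), gives $\#\text{plates}=\#\text{negative rods}=\#\text{mixed rods}$ in $A$. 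The handshake lemma applied to $G_A$ (a graph with $V=E$, so $\sum v_i=2V$) then forces the existence of a plate $P'\subset A$ of valence $1$ in $G_A$: one incident to a single negative rod and two mixed rods.

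Now analyze the Y-shape $P'\cap B^s$, whose switch (interior to $P'$) points along one of $P'$'s three $\tau$-edges. Suppose first that the switch points along $P'$'s negative rod. Then the cuspidal sector of the Y is bounded by the two cobranches (ending at stops on the two mixed rods), the two corresponding half-edges of those mixed rods, and the tip-arc opposite the negative rod --- every boundary piece lies on $\partial A$. This yields a self-contained ``small'' patch of $A\cap B^s$ with one switch and two corners. The small-patch argument from the proof of \Cref{lem_onlyannuli} now applies: a small patch of $A\cap B^s$ carrying a switch would have its tip-arc be a negative rung conduit in the boundary of some $\flat$-disk, and the unlinking of positive and negative conduits forced on that $\flat$-disk by the efficient bigon property is incompatible with the patch being confined to a single plate. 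This rules out the present configuration.

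If instead the switch of $P'\cap B^s$ points along one of the two mixed rods of $P'$, then the non-cuspidal sector of the Y around the vertex opposite the negative rod is bounded by the stem, the cobranch to the other mixed rod, the two corresponding half-edges of those mixed rods, and the tip-arc opposite the negative rod --- again every piece lying on $\partial A$. This is a small patch with zero switches and two corners, so its index is $2(1)-0-2/2=1$, contradicting the fact that every patch of $A\cap B^s$ has index $0$. These two cases (the second splitting into two symmetric subcases) exhaust the possibilities for where the switch of $P'\cap B^s$ can point, completing the contradiction. The main obstacle I foresee is verifying that the small-patch-with-switch argument of \Cref{lem_onlyannuli} transfers faithfully to the annular setting; since that argument is localized to a single $\flat$-disk in $U$ and does not rely on whether the $N$-component is a disk or an annulus, it should apply here verbatim.
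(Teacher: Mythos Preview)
Your argument is correct and takes a genuinely different route from the paper's. The paper assumes a proper sub-annulus $A'\subsetneq A$ exists, shows all patches of $A''\cap B^s$ and $A''\cap B^u$ have index $0$, and then extends $A'$ across one more rod and plate to produce a patch with two switches, contradicting \Cref{lem_nonegindex}. You instead use the Euler computation $V=E$ in the dual graph $G_A$ to locate a valence-$1$ plate $P'$ directly and analyze its Y-shape. Your approach is somewhat more hands-on and avoids the sub-annulus scaffolding, at the cost of reusing the small-patch-with-switch argument from \Cref{lem_onlyannuli}; as you correctly anticipate, that argument is local to a single $\flat$-disk and transfers verbatim.

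Two small remarks. First, in the handshake step you should say a word ruling out valence $0$: a valence-$0$ plate would be an isolated vertex of $G_A$, but $G_A$ is connected (since $A$ is) and has $V=E\ge 2$ once a valence-$3$ vertex is present, so valence $0$ cannot occur. Second, your Case~A can be streamlined: if the $B^s$-switch of $P'$ points toward the negative rod $e_1$, then by \Cref{lem_traintrackfacts} the $B^u$-switch must point toward one of the mixed rods, so the small patch of $A\cap B^u$ opposite $e_1$ has no switch and index $1$, contradicting the index-$0$ condition exactly as in your Case~B. This avoids the $\flat$-disk detour entirely and unifies the two cases.
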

\begin{proof}
Let $A$ be a component of $\Neg(S^\flat)$. By \Cref{lem_onlyannuli}, $A$ is an annulus. Suppose there is a proper subset $Y$ of the set of plates in $A$ such that the union $A'$ of $Y$ with all rods incident to two plates in $Y$ is an annulus. Let $A''$ be the annulus obtained by rounding the corners of $A'$. Let $p''$ be a patch of $A''\cap B^s$. Then $p''$ is contained in some patch $p$ of $A^s$ and $\ind(p'')\ge \ind(p)$ since the number of corners is equal for both patches and $p''$ has at most as many switches as $p$. The same holds for patches of $A''\cap B^u$, so we conclude that each patch of $A''\cap B^s$ and $A''\cap B^u$ has nonnegative index. Since $\chi(A'')=0$, the index of each patch must be 0. 

 Because $Y$ does not contain all the plates in $A$ by assumption, there must be some negative rod $r$ in $A$ incident to a plate $P\in A-Y$. But because the index of any patch of $A'\cap B^s$ or $A'\cap B^u$ is 0, each patch touching $r$ has a switch. Hence either $(A'\cup r\cup P)\cap B^s$ or $(A'\cup r\cup P)\cap B^u$ has a patch with two switches, contradicting \Cref{lem_nonegindex}.
 
It follows that there is no such set $Y$ and the claim is proved.
\end{proof}

In light of \Cref{lem_onlyannuli} and \Cref{lem_minimalannuli}, there are two types of possible components of $\Neg(S^\flat)$. If $A$ is a component of $\Neg(S^\flat)$ such that some component of $A\cap \partial \mr M$ is a circle, we say $A$ is a \textbf{ladderpole component of $\boldsymbol{\Neg(S^\flat)}$}. Otherwise, $A$ is a \textbf{non-ladderpole component of $\boldsymbol{\Neg(S^\flat)}$} (see \Cref{fig_negativecomponents}). Ladderpole components are so named because if $A$ is a ladderpole component and $c$ is the component of $\partial A$ lying in $\partial N_\epsilon$, then $c$ has ladderpole slope and consists of only ladderpole conduits by \Cref{lem_nonegindex}.

\begin{figure}
\centering
\includegraphics[height=1.5in]{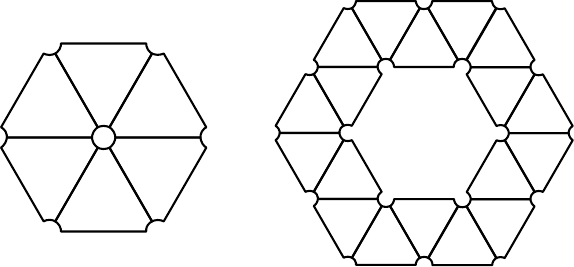}
\caption{Two types of components of $\Neg(S^\flat)$: ladderpole (left) and non-ladderpole (right). Note that each plate has an edge on $\partial N$. As a visual shorthand, we will draw pictures of these components omitting the rods in the plate and rod decomposition, simply drawing them as edges.}
\label{fig_negativecomponents}
\end{figure}

By \Cref{lem_consistentcurling} if $A$ is a component of $\Neg(S^\flat)$ and $C$ is a component of $\partial A$, then the curling of $S^\flat$ is consistent at each mixed rod incident to $C$.

\begin{lemma}\label{lem_nolargebranches}
Let $A$ be a non-ladderpole component of $\Neg(S^\flat)$. Let $A^s: =A\cap B^s$ and $A^u:=A\cap B^u$. If either $A^s$ or $A^u$ contains a large branch then the area of $S^\flat$ can be reduced by flat isotopy.
\end{lemma}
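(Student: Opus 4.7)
I first unpack the hypothesis. A large branch $b$ of $A^s$ joins the switches of two plates $P_1, P_2$ of $A$ across a shared rod $r$, with both switches pointing toward $r$. By the defining property of $B^s$, the switch of $\hex_i \cap B^s$ points toward the rod corresponding to the bottom edge of the $\tau$-tetrahedron for which $\hex_i$ is a bottom face. Since each $\tau$-edge is the bottom edge of a unique $\tau$-tetrahedron, $P_1$ and $P_2$ must lie in the two bottom faces of a common $\tau$-tetrahedron $\tet$ whose bottom edge is the $\tau$-edge $e$ underlying $r$. Since $P_1, P_2$ are negative, $\tet$ sits immediately above $P_1 \cup P_2$ in the coorientation direction.

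The strategy is to perform an upward flip move across $\tet$ in order to create a configuration where a subsequent area-reducing move is available. The flip requires $P_1, P_2$ to be topmost in their $N_\epsilon$-plates; I would first arrange this by noting that if, say, $P_1$ is not topmost, then by parity-alternation in the stack of $S^\flat$-plates in $P_{\smhex_1}$ the plate just above $P_1$ is positive, and using the efficient bigon property together with \Cref{lem_diskproperties} one can find a tetrahedron move or disk removal that either reduces area directly or brings $P_1$ to the topmost position. After the flip, $P_1 \cup P_2 \cup r$ is replaced by $P_1' \cup P_2' \cup r'$, where $r'$ corresponds to the top edge of $\tet$ and the new plates remain negative.

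I then case-split on whether $\tet$ is hinge. When $\tet$ is hinge, the pair $P_1', P_2'$ sits bottommost in the $N_\epsilon$-plates above $\tet$; any $S^\flat$-plate lying just above $P_1'$ or $P_2'$ in the same $N_\epsilon$-plate would, together with the post-flip configuration, produce two plates separated by a single rod (a side edge of $\tet$) with $\tet$ between them, to which \Cref{lem_nohingemoves} applies. In the absence of such a plate, a preparatory tetrahedron move (enabled by the efficient bigon property) produces the required configuration. When $\tet$ is non-hinge, I would iterate, flipping across successive tetrahedra in the fan of $e$; by \Cref{lem_fans} every fan contains a hinge tetrahedron, so this process, modeled on the proof of \Cref{lem_strongbigon}, eventually reduces to the hinge case. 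Throughout, the non-ladderpole hypothesis on $A$ ensures that $\partial A$ contains rung conduits (rather than only ladderpole conduits as forced on annulus patches by \Cref{lem_nonegindex}), which keeps the process anchored and prevents escape through the boundary of $A$ before a hinge tetrahedron is reached. The large branch in $A^u$ case is handled symmetrically using $B^u$ and top edges in place of $B^s$ and bottom edges.

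The main obstacle is the bookkeeping across this sequence of moves: each flip or tetrahedron move rearranges the rods at the side edges of the tetrahedron being crossed, and one must track this rearrangement and its effect on $\partial \mr S^\flat$ until a hinge tetrahedron yields an area reduction. The non-ladderpole hypothesis on $A$ is essential because it provides enough rigidity in the boundary structure to control this rearrangement and force eventual termination in area reduction.
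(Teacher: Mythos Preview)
Your approach diverges substantially from the paper's and has genuine gaps.

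The paper's argument is short and uses a completely different mechanism: the curling contradiction via \Cref{lem_curling}. One performs upward flips until $A^s$ has no large branches, yielding an annulus $A_+$ with $\partial A_+=\partial A$ in which some switch of $A_+^s$ points toward a mixed rod in $\partial A$. By \Cref{lem_curling} this forces $S^\flat$ to curl up at $\partial A$ (or else $\partial\mr S^\flat$ has backtracking and a face move reduces area). But since $A_+$ was obtained by flipping $A$ upward, $A_+^u$ now has a large branch; downward flips produce $A_-$ with a switch of $A_-^u$ pointing toward $\partial A$, forcing $S^\flat$ to curl \emph{down} (or backtracking). The two curling conclusions are incompatible, so backtracking occurs and area drops. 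No hinge/non-hinge case split, no use of \Cref{lem_nohingemoves}, no branch-line iteration.

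Your proposal, by contrast, tries to route through \Cref{lem_nohingemoves} and a fan-iteration modeled on \Cref{lem_strongbigon}, and several steps do not hold up. First, the ``parity-alternation'' claim that the plate just above a negative $P_1$ in the same $N_\epsilon$-plate must be positive is unjustified; nothing about embeddedness of an oriented $S^\flat$ forces adjacent plates in a single $N_\epsilon$-plate to alternate sign. Second, your application of \Cref{lem_nohingemoves} after the flip is not the right configuration: a plate lying just above $P_1'$ in the \emph{same} $N_\epsilon$-plate is not ``separated by a single rod'' from anything with $\tet$ in between; \Cref{lem_nohingemoves} requires one plate in a bottom face and one in a top face of the same tetrahedron sharing a side $\tau$-edge, which you have not produced. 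Third, the non-hinge iteration does not run as described: after flipping across $\tet$, the new plates $P_1',P_2'$ lie in top faces of $\tet$, and their $B^s$-switches point to bottom edges of \emph{other} tetrahedra, not to the shared top edge of $\tet$; there is no reason a new large branch appears between them, so the ``fan of $e$'' picture does not persist. Finally, your stated role for the non-ladderpole hypothesis (anchoring the process via rung conduits) is not how it is used; in the paper's argument it simply guarantees that both components of $\partial A$ consist of mixed rods so that the curling conclusion from \Cref{lem_curling} and \Cref{lem_consistentcurling} is meaningful on all of $\partial A$.
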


\begin{proof}
Suppose that $A^s$ contains a large branch. Then after a finite sequence of upward flip moves on $S^\flat$, we can replace $A$ by a new annulus $A_+$ with $\partial A_+=\partial A$ with the property that $A_+^s$ does not contain any large branches. After each flip move the switches of the modified $A^s$ are in natural bijection with the switches before the flip. Further, after the flip each switch either points the same way around $A$ as it did before, or points toward $\partial A$. Since $A_+^s$ has no large branches, there must at least one $A_+^s$-switch pointing toward $\del A_+$.

We claim furthermore that there is at least one $A_+^s$-switch pointing toward each component of $\partial A_+$. To see this, let $\del_1$ and $\del_2$ be the 2 boundary components of $A_+$ and suppose that no $A_+^s$-switches point toward $\del _2$. Then, if we delete all branches of $A_+^s$ which touch $\del_2$, we obtain a train track $t$ with a single topological annulus patch $p$ containing $\del_2$. Since there is a positive number of $t$-switches pointing toward $\del_1$, the patch $p$ must have negative index. It follows that some patch of $A_+^s$ must have negative index, which means that the corresponding patch of $A^s$ has negative index. This contradicts \Cref{lem_nonegindex}.

By \Cref{lem_curling}, the curling of $S^\flat$ must be consistent along each component of $S^\flat$ if $\del\mr S^\flat$ has no backtracking. Thus we have shown that either $S^\flat$ curls up at both components of $\partial A$ or $\del\mr S^\flat$ has backtracking, in which case a face move reduces the area of $S^\flat$. Hence we can assume that $S^\flat$ curls up at $\partial A$.
Since $A_+$ was obtained by flipping $A$ upwards, $A_+^u=A_+\cap B^u$ must have a large branch and we can flip $A_+^u$ downward to an annulus $A_-$ with $\partial A_-=\partial A=\partial A_+$ such that $A_-^u$ has no large branches and has switches pointing toward both components of $\partial A_-$. By \Cref{lem_curling}, either $S^\flat$ curls \emph{down} at $\partial A$ or $\del\mr S^\flat$ has backtracking. Since we assumed $S^\flat$ curls up at $\partial A$, we conclude the area of $S^\flat$ can be reduced. A symmetric argument proves the claim for $A^u$.
\end{proof}

\begin{lemma}\label{lem_producingloops}
Let $A$ be a non-ladderpole component of $\Neg(S^\flat)$. If neither $A^s$ nor $A^u$ contains a large branch then one of $A^s$ or $A^u$ carries the core curve of $A$, denoted $\core(A)$, and the image of $\core(A)$ under the collapsing map $\coll\colon N_\epsilon\to \hbs$ is a stable or unstable loop.
\end{lemma}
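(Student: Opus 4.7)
The plan is to combine the index constraint coming from $\chi(A)=0$ with the no-large-branch hypothesis and the curling behavior at $\partial A$ to extract a smooth closed curve in one of $A^s$ or $A^u$ that wraps around $\core(A)$, and then identify its $\coll$-image as a stable or unstable loop via the combinatorics developed in Section \ref{sec_veeringcombinatorics}.

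First, since $A$ is an annulus by Lemma \ref{lem_onlyannuli}, the rounded surface (call it $N_A$) is also an annulus with $\chi(N_A) = 0$. Applying the identity $2\chi = \sum \ind(p)$ together with Lemma \ref{lem_nonegindex} forces every patch of both $N_A\cap B^s$ and $N_A\cap B^u$ to have index exactly zero. Hence each patch is either an interior bigon, an interior annulus (which by Lemma \ref{lem_nonegindex} has no switches and only ladderpole conduit boundary segments), or an index-zero boundary disk with either one switch and two corners or no switches and four corners. This severely restricts the combinatorial possibilities.

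Next I would produce a smooth closed curve carrying $\core(A)$ in one of $A^s$ or $A^u$. The curling of $S^\flat$ along each component of $\partial A$ is consistent by Lemma \ref{lem_consistentcurling}, and by Lemma \ref{lem_curling} this consistency forces the maws at every switch lying on a plate adjacent to a ``curl-down'' (respectively ``curl-up'') boundary component to point into the interior of $A$ when working in $A^s$ (respectively $A^u$). The no-large-branch hypothesis is exactly what prevents the resulting maw-flow from becoming trapped at an interior configuration: without large branches, every interior branch has at least one endpoint where the maw does not point into it, so tracing along maw-side branches must eventually leave a region. Combined with the limited patch types enumerated above, this produces a smooth closed curve $\gamma$ in $A^s$ (or $A^u$), and $\gamma$ is homotopic to $\core(A)$ since every essential simple closed curve on the annulus $A$ is.

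Finally I would verify that $\coll(\gamma)$ is a stable loop (when $\gamma \subset A^s$) or an unstable loop (when $\gamma \subset A^u$). Since each branch of $B^s\cap\hex$ connects two distinct $\tau$-edges of $\hex$, the image $\coll(\gamma)$ is a normal curve in $\tau^{(2)}$. Smoothness at the unique switch of $B^s\cap\hex$ forces $\gamma$ to pass through the branch ending at the $\tau$-edge that is the bottom edge of the $\tau$-tetrahedron for which $\hex$ is a bottom face; orienting $\gamma$ consistently using the boundary curling makes this distinguished edge the entry edge at each face traversed. By the definitions in Section \ref{sec_veeringcombinatorics}, a face $\hex$ is topmost in the fan of a $\tau$-edge $e$ precisely when $e$ is the bottom edge of the tet for which $\hex$ is a bottom face, so always entering through this edge is exactly the stable-loop condition. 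The unstable case follows by the symmetric analysis using $B^u$, whose switches point toward the top edge of the tet below $\hex$, paralleling the bottommost-to-non-bottommost definition of unstable loop. The main obstacle is the middle paragraph: converting the patch enumeration and the no-large-branch hypothesis into a concrete smooth closed curve, where one must navigate the interaction between interior annulus patches (whose $\partial A$-boundaries must be ladderpole by Lemma \ref{lem_nonegindex}) and the non-ladderpole nature of $A$.
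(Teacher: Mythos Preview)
Your middle paragraph is where the argument breaks down, and the gap is genuine rather than cosmetic.

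First, the curling input you invoke is too weak to decide which of $A^s$ or $A^u$ carries $\core(A)$. Lemma~\ref{lem_consistentcurling} gives consistency of curling only along \emph{each} boundary circle of $A$ separately; nothing proved so far forces the two boundary circles to curl the same way. If one boundary curls down and the other curls up, then Lemma~\ref{lem_curling} tells you only that some $A^s$-switches avoid one boundary while some $A^u$-switches avoid the other, and you cannot conclude that either train track has \emph{all} of its switches pointing into the interior. Moreover, even when the curling is uniform, Lemma~\ref{lem_curling} says only that the switch does not point toward \emph{that particular} mixed rod; it does not by itself rule out the switch pointing toward the other boundary component, so ``points into the interior'' is not yet justified. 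Your maw-tracing sketch (``no large branches means the flow is not trapped'') is also not an argument: absence of large branches does not by itself produce a carried essential closed curve on an annulus.

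The paper's proof proceeds along a completely different axis. It assumes $A^s$ does \emph{not} carry $\core(A)$ and first extracts a structural consequence via an index argument: every properly embedded arc carried by $A^s$ must join the two boundary components of $A$ (else a complementary region would have positive index, contradicting Lemma~\ref{lem_nonegindex}). It then performs a plate-by-plate analysis using the veer of the incident $\tau$-edges through Lemma~\ref{lem_traintrackfacts}: looking at a plate $P$ with two negative rods and one mixed rod, one reads off the veer of the three edges from the direction of the $B^s$-switch, propagates this to the adjacent plate $Q$, and uses the ``segments span both boundaries'' constraint together with the no-large-branch hypothesis to pin down the veer of the shared edge. This forces the $B^u$-switch in $P$ to point toward a negative rod in every case. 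Since no $A^u$-switch points toward $\partial A$, $A^u$ carries $\core(A)$; the no-large-branch hypothesis on $A^u$ then gives the unstable-loop conclusion. The essential ingredient you are missing is this local veer calculation via Lemma~\ref{lem_traintrackfacts}; curling plays no role in the paper's argument at this step.
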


\begin{proof}
Suppose that $A^s$ does not carry the core of $A$. Note that the endpoints of any line segment $\gamma$ properly embedded in $A$ and carried by $A^s$ must lie on different components of $\partial A$. Otherwise, letting $A'$ denote the component of $N$ containing $A$, one component of $A'-\gamma$ would have index 1, forcing some patch of $A'\cap B^s$ to have positive index.

To show that $A^u$ carries the core of $A$, we will show that $A^u$ has no switches which point toward $\partial A$. Let $P$ be a plate in $A$ incident to negative rods $r_1$ and $r_2$, and a mixed rod $r_3$. Let $e_i$ be the $\tau$-edge corresponding to $r_i$ for $i=1,2,3$. For convenience we can choose the labeling $r_1, r_2, r_3$ to be oriented counterclockwise. Let $p^u$ and $p^s$ be the switches of $A^u$ and $A^s$ lying in $P$. If $p^s$ points toward $\partial A$, then $p^u$ must point toward $r_1$ or $r_2$ by \Cref{lem_traintrackfacts}. Now suppose that $p^s$ points toward $r_1$ or $r_2$; we can assume without loss of generality that $i=2$, which by our labeling forces $r_1$ to be right veering and $r_3$ to be left veering. Let $Q$ be the plate in $A$ which is also incident to $r_2$, and let $q^u$ and $q^s$ be the switches of $A^u$ and $A^s$ respectively lying in $Q$. Label the other rods incident to $Q$ by $r_4$ and $r_5$ so that the triple $(r_2, r_4, r_5)$ is oriented counterclockwise, as in the following picture:
\begin{center}
\includegraphics[]{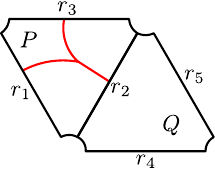}
\end{center}
Suppose that $r_2$ is left veering. Then $q^s$ must point toward $r_5$ since $A^s$ has no large branches. If $r_5$ is a mixed rod then $A^s$ must have a patch of index 0, a contradiction. If $r_5$ is negative then $r_4$ must be mixed, so since $A^s$ does not carry the core of $A$ there must be a properly embedded line segment starting at either $r_3$ or $r_4$ with both its endpoints on the same boundary component of $A$, a contradiction. We conclude that $r_2$ must be right veering, forcing the large half branch incident to $p^u$ to end at $r_1$ by \Cref{lem_traintrackfacts}. Since $A^u$ has no switches pointing toward $\partial A$, it carries $\core(A)$. Since $A^u$ has no large branches, $\core(A)$ collapses to an unstable loop. Symmetric reasoning shows that if $A^u$ does not carry the core of $A$ then $A^s$ does, and that the core then collapses to a stable loop.
\end{proof}

If $A$ is a non-ladderpole component and $A^s$ carries $\core(A)$, which then necessarily collapses to a stable loop, then $A$ is called a \textbf{stable component}. The definition of \textbf{unstable component} is symmetric. We remark that if $A$ is a ladderpole component then  $\core(A)$ collapses to a curve which is both a stable and unstable loop, and so it might be more precise to call stable and unstable components ``strictly stable" and ``strictly unstable," but we forgo this precision.

\begin{definition}
Let $S^\flat$ be a flattened surface with the efficient bigon property. If in addition each component of $\Neg(S^\flat)$ is either stable, unstable, or ladderpole, we say that $S^\flat$ has the \textbf{excellent bigon property}.
\end{definition}

\begin{remark}Suppose that $S^\flat$ has the excellent bigon property. Because $\partial \mr S^\flat$ has no backtracking, $S^\flat$ curls down along stable components of $\Neg(S^\flat)$ and up along the boundaries of unstable components. If $A$ is a ladderpole component, $S^\flat$ could possibly curl up or down along $A$. 
\end{remark}

We compile the results of this subsection by recording the following useful lemma.

\begin{lemma}\label{lem_loops}
Let $S^\flat$ be a surface with the efficient bigon property. Then after a flat isotopy which does not increase area, $S^\flat$ has the excellent bigon property.
\end{lemma}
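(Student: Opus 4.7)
The plan is to induct on $\mathrm{area}(S^\flat)$. By \Cref{lem_onlyannuli} and \Cref{lem_minimalannuli}, every component of $\Neg(S^\flat)$ is an annulus, of either ladderpole or non-ladderpole type. The contrapositive of \Cref{lem_producingloops} says that a non-ladderpole component $A$ is stable or unstable precisely when neither $A^s$ nor $A^u$ contains a large branch. Consequently, $S^\flat$ already has the excellent bigon property if and only if no non-ladderpole component of $\Neg(S^\flat)$ admits a large branch in $A^s$ or $A^u$.

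If $S^\flat$ satisfies this condition, we are done. Otherwise, I would pick a non-ladderpole component $A$ with a large branch in $A^s$ or $A^u$ and apply \Cref{lem_nolargebranches} to flat-isotope $S^\flat$ to a surface $S_1^\flat$ of strictly smaller area. The intermediate $S_1^\flat$ need not have the efficient bigon property, so I would restore it as follows: apply face moves, disk removals, and width one moves to eliminate any nullgons or monogons that appeared; use the index-summation argument of \Cref{lem_indexsumarg} to recover the bigon property; and then apply \Cref{lem_strongbigon} iteratively to recover the efficient bigon property. Each cleanup operation is area-non-increasing (and several are strictly area-reducing), so the outcome is a flattened surface $T^\flat$ with the efficient bigon property and $\mathrm{area}(T^\flat) \le \mathrm{area}(S_1^\flat) < \mathrm{area}(S^\flat)$. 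The inductive hypothesis applied to $T^\flat$ then yields a further flat isotopy, with area non-increasing throughout, ending at a surface with the excellent bigon property; concatenating the isotopies produces the isotopy we want for $S^\flat$.

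The main obstacle I anticipate is pedantically checking that the restoration phase following \Cref{lem_nolargebranches} is genuinely area-non-increasing. The individual moves invoked are area-non-increasing by construction, but care is needed to verify that the reasoning of \Cref{lem_indexsumarg}, originally phrased for a taut surface being flattened from scratch, applies to our already-flattened but temporarily noncompliant intermediate surface. Since tautness and the condition $[S] \in \cone(\sigma_\tau)$ are preserved under isotopy, the index-sum identities exploited in that lemma remain valid, and the argument should adapt without trouble.
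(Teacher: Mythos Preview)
Your argument is correct and matches the paper's intended proof, which the paper presents only as a ``compilation'' of the preceding lemmas without spelling out details. The core logic is exactly as you describe: if $S^\flat$ has the efficient but not the excellent bigon property, some non-ladderpole component $A$ is neither stable nor unstable, so by the contrapositive of \Cref{lem_producingloops} one of $A^s, A^u$ has a large branch, and \Cref{lem_nolargebranches} strictly reduces area; iterate. Your explicit induction on area, together with the restoration phase (face moves and width-one moves to kill positive-index patches, then the index-sum argument of \Cref{lem_indexsumarg} and \Cref{lem_strongbigon} to recover the efficient bigon property), is the honest way to make this precise, and your justification that tautness and membership in $\cone(\sigma_\tau)$ survive isotopy so that \Cref{lem_indexsumarg}'s index computation still applies is exactly right.

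One small inaccuracy: you write that \Cref{lem_producingloops} gives ``precisely when,'' but the lemma only gives one direction (no large branches $\Rightarrow$ stable or unstable). Fortunately you only use that direction, so the argument is unaffected.
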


\subsection{Leveraging the excellent bigon property}\label{sec_leverage}

\begin{lemma}
Let $S^\flat$ be a surface with the excellent bigon property, and let $\delta$ be a $\flat$-disk of $S^\flat$.  Then $\del\delta_-$ contains exactly two rung segments $r_1$ and $r_2$, which satisfy the following: 
\begin{enumerate}[label=(\roman*)]
\item $r_1$ and $r_2$ are adjacent in the sense that they are incident to the same negative junction segment, and
\item at least one of the $r_i$ is incident to a cusp of $\delta$.
\end{enumerate}
\end{lemma}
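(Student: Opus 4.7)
The plan is to combine the rung-count and width constraints from \Cref{lem_diskproperties} with the structural information provided by the excellent bigon property. First I would establish that $\delta^-$ contains exactly two rung conduits. By \Cref{lem_diskproperties}, $\partial\delta$ contains four rung conduit segments in total, two crossing upward ladders and two crossing downward ladders. In each of the two ladders crossed by $\delta$, the two rungs are distinguished as an upper and a lower rung with respect to the coorientation of $\partial\hbs$ along that ladder. Since $\delta$ is efficient, $\delta^+$ and $\delta^-$ are connected arcs meeting at the two cusps, and a sign analysis comparing the (constant) coorientation of $S^\flat$ on each plate to the orientation of the vertical foliation shows that the ``upper'' rungs of both ladders lie together on one side of this top/bottom split and the ``lower'' rungs on the other. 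This yields exactly two rung conduit segments $r_1$ and $r_2$ in $\delta^-$, one crossing an upward ladder and one crossing a downward ladder.

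Next I would use the excellent bigon property to locate $\delta^-$ within the structure of $\Neg(S^\flat)$. By \Cref{lem_negativerungs}, $\delta^-$ is an interval component of $\Neg(S^\flat)\cap\partial\mr M$ containing a rung conduit, so it lies on the boundary of some component $A$ of $\Neg(S^\flat)$. Since ladderpole components have only circle components on $\partial\mr M$ (consisting solely of ladderpole conduits), the presence of a rung in $\delta^-$ forces $A$ to be non-ladderpole; by the excellent bigon property, $A$ is thus either stable or unstable. By \Cref{lem_producingloops,lem_nolargebranches}, $A$ is an annulus whose core collapses to a stable or unstable loop and whose stable or unstable train track has no large branches, and by \Cref{lem_minimalannuli} each plate of $A$ is incident to exactly one mixed rod. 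This pins down $A$ as a cyclic chain of plates, with the arcs of $A\cap\partial\mr M$ terminated by mixed junctions (cusps of $\flat$-disks).

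To finish, I would trace $\delta^-$ through the chain of plates of $A$ to prove adjacency and cusp incidence. The key observation is that if an intermediate ladderpole conduit were to sit between $r_1$ and $r_2$ along $\delta^-$, it would correspond to one or more additional plates of $A$ intervening between the plates carrying $r_1$ and $r_2$; examining the induced train tracks on those plates via \Cref{lem_traintrackfacts,lem_tipcondition,lem_fans} would either force a large branch on $A^s$ or $A^u$ (contradicting \Cref{lem_nolargebranches}), or force a hinge tetrahedron configuration that would have been simplified away during the flattening process of \Cref{sec_bigonproperty}. Hence the plates carrying $r_1$ and $r_2$ must be joined by a single negative rod, yielding a single negative junction segment incident to both. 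Since $\delta^-$ is bounded at both ends by cusps and has the resulting form \emph{(ladderpole conduits)}$-r_1-$\emph{(negative junction)}$-r_2-$\emph{(ladderpole conduits)}, at least one of the $r_i$ is incident to a cusp. The main obstacle lies in this final combinatorial step: carefully leveraging the veering combinatorics of fans and hinge/non-hinge tetrahedra to rule out intermediate ladderpole conduits between the two rungs.
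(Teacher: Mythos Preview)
Your setup is reasonable, and identifying the relevant component $A$ of $\Neg(S^\flat)$ as non-ladderpole (hence stable or unstable, with no large branches in $A^s$ or $A^u$) is exactly right. But the final paragraph contains two genuine gaps.

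First, your argument for (i) is too vague to be a proof. The paper's proof isolates a single clean consequence of the no-large-branch condition: $\delta^-$ can never traverse two consecutive conduits that together form both $0$-$\pi$ edges of a flat triangle. With that observation in hand, one supposes a ladderpole $\ell_n$ intervenes before the second rung $r_2$ and analyzes the junction of $\ell_n$ with $r_2$; the observation forces $b(r_2)$ to be topmost at its \emph{other} endpoint, which (since $\delta^-$ is the bottom of $\delta$ in the outward case) forces yet another conduit $\ell_{n+1}$ after $r_2$, and then $r_2, \ell_{n+1}$ violate the very observation just made. Your sketch instead appeals to ``a hinge tetrahedron configuration that would have been simplified away,'' but the hinge-simplification lemmas (\Cref{lem_hinge}, \Cref{lem_non-hinge}) come \emph{after} this lemma and depend on it, so invoking them here is circular.

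Second, your deduction of (ii) is a non sequitur. Writing $\delta^-$ as \emph{(ladderpoles)}$-r_1-$\emph{junction}$-r_2-$\emph{(ladderpoles)} does not show that either $r_i$ is incident to a cusp; if both ladderpole runs are nonempty, neither rung is at an endpoint of $\delta^-$. The paper's argument is different and direct: in the outward case, with $r_1$ crossing the upward ladder, if $r_1$ were \emph{not} incident to a cusp then the extra negative conduit preceding $r_1$ would force backtracking in $\partial\mr S^\flat$, contradicting the efficient bigon property. You need some such argument to rule out ladderpoles on the $r_1$ side.
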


\begin{proof}
The fact that $\del\delta_-$ contains exactly two rung conduit segments follows from the efficient bigon property. Since $S^\flat$ has the excellent bigon property, $\del\delta_-$ is part of the boundary of a component $A$ of $N$ containing a stable or unstable loop and neither $A^s=A\cap B^s$ nor $A^u=A\cap B^u$ has a large branch.  As such, $\del\delta_-$ never traverses two conduits whose union corresponds to both $0$-$\pi$ edges of any flat triangle.

We shall prove (i) in the case when $\delta$ is outward as the inward case is symmetric.
Suppose that there is a segment of $\del\delta_-$ whose conduit segments, in order, are $r_1, \ell_1,\dots, \ell_n, r_2$ where $r_1$ crosses an upward ladder, $r_2$ crosses a downward ladder, and the $\ell_i$ are ladderpole conduit segments. Let $j$ be the junction of $\ell_n$ and $r_2$, let $s(j)$ be the corresponding $\partial\hbs$-switch, and let $b(\ell_n)$ and $b(r_2)$ be the $\partial\hbs$-branches corresponding to $\ell_n$ and $r_2$ respectively. By the observation in the first paragraph of this proof, $b(r_2)$ must not be the topmost branch at $s(j)$. However, this means that $b(r_2)$ must be a topmost branch at the switch corresponding to  its other endpoint. Since $\del\delta_-$ is the bottom of $\delta$ we see that there must be another negative conduit segment $\ell_{n+1}$ in $\del\delta_-$ after $r_2$. If $t$ is the flat triangle immediately above $b(r_2)$, then $r_2$ and $\ell_{n+1}$ correspond to both 0-$\pi$ edges of $t$, a contradiction. This proves (i).

To see the truth of (ii), we again assume $\delta$ is outward and use the same notation as above, where $r_1$ crosses an upward ladder and $r_2$ crosses a downward ladder. Then $r_1$ must be incident to a cusp of $\delta$ because otherwise there would be backtracking in $\partial \mr S^\flat$. Again, the inward case is symmetric.
\end{proof}

If $\delta$ is a $\flat$-disk of $S^\flat$ with the excellent bigon property and $\del\delta_-$ contains any ladderpole conduit segments in addition to the two adjacent rung conduit segments described in the previous lemma, we say $\delta$ has \textbf{long negative boundary}. Otherwise $\delta$ has \textbf{short negative boundary} (see \Cref{fig_efficientdisk}).
We will orient $\del\delta_-$ so that it starts at a rung conduit and ends at a ladderpole conduit. With this orientation, we call the first conduit of $\del\delta_-$ the \textbf{initial conduit} of $\del\delta_-$. All other conduits are called \textbf{noninitial conduits}. The first negative junction segment traversed by $\del\delta_-$ is called its initial junction and all other negative conduit segments are called \textbf{noninitial junctions}. Note that $\del\delta_-$ has noninitial junctions if and only if it is long.
\begin{figure}
\centering
\includegraphics[height=2in]{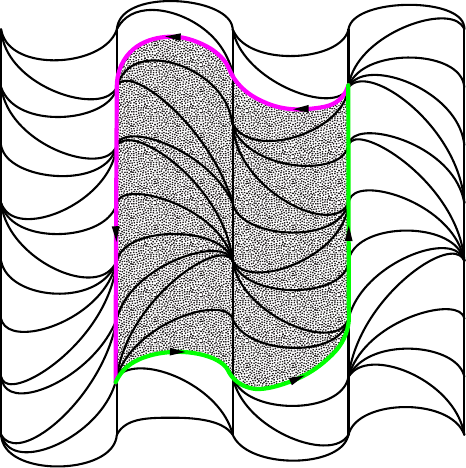}
\caption{An efficient $\flat$-disk $\delta$. If $\delta$ is outward, then the green segment is the negative boundary and it is oriented as shown. If $\delta$ is inward, the magenta segment is the negative boundary and it is oriented as shown. In both cases the negative boundary is long.}
\label{fig_efficientdisk}
\end{figure}

\begin{figure}
\centering
\includegraphics[]{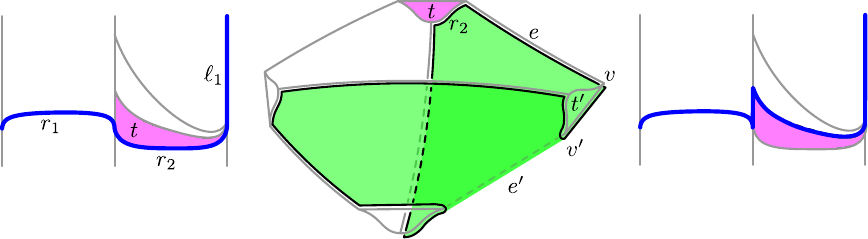}
\caption[]{Pictures in the proof of \Cref{lem_hinge}, left to right: if $t$ is non-hinge, there is an available tetrahedron move across 
$\tet$
which creates a kink in $\partial \delta$.
}
\label{fig_hingelemma}
\end{figure}

\begin{lemma}[noninitial rung lemma]\label{lem_hinge}
Let $S^\flat$ be a surface with the excellent bigon property. Let $\delta$ be a $\flat$-disk of $S^\flat$ with long negative boundary, which we think of as a concatenation of conduits $r_1,r_2,\ell_1,\dots, \ell_n$ where the $r_i$ are rungs and the $\ell_i$ are ladderpoles. If the flat triangle $t$ meeting $r_2$ on the $\delta$ side is non-hinge, then the area of $S^\flat$ can be reduced by flat isotopy.
\end{lemma}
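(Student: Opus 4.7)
My proposal is to argue by local analysis near $r_2$ and then invoke \Cref{lem_strongbigon}. By symmetry, assume $\delta$ is outward, so $\delta^-$ is the bottom of $\delta$. From the proof of the previous lemma we know $r_1$ crosses an upward ladder, $r_2$ crosses a downward ladder, and (since $\delta^-$ is long) $\ell_1,\dots,\ell_n$ run along the ladderpole emanating from one endpoint of $r_2$. The flat triangle $t$ meeting $r_2$ on the $\delta$-side is a downward flat triangle with $r_2$ as its 0-0 edge; its two cuspidal vertices $v,v'$ are the endpoints of $r_2$, and $v$ may be taken to be the cuspidal vertex of $t$ that the ladderpole conduits $\ell_1,\dots,\ell_n$ emanate from (i.e.\ the switch at which the $\ell_i$-side of $r_2$ meets the ladder).

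Next I identify the tetrahedron $\tet$. Since $t$ is non-hinge, \Cref{lem_fans} implies $t$ sits in a short fan at one cuspidal vertex and in a long fan (neither topmost nor bottommost) at the other. I claim the short fan must in fact be at $v'$ (the endpoint of $r_2$ opposite the $\ell_i$'s): indeed, the fact that $\delta^-$ continues along the ladderpole past $v$ forces the adjacent downward flat triangle across from $t$ at $v$ to also lie on $\delta$'s side of $\partial \delta$, so $t$ is not alone in its fan at $v$. Let $\tet$ be the unique tetrahedron in this short fan at $v'$. Then $\tet$ has $t$ as a tip, and the cusp of $\delta$ formed at $v'$ by the negative conduit $r_2$ and the positive conduit of $\delta^+$ immediately across is of size one: any conduit of $\partial\mr S^\flat$ that would contribute to its size would violate the excellent bigon property (in particular the characterization of $N$ from \Cref{lem_onlyannuli} and \Cref{lem_minimalannuli}) or produce backtracking. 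The cusp is free for the same reason.

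With this size-one free cusp in hand, the tetrahedron move across $\tet$ is available, and performing it sweeps the portion of $\partial\delta$ near $v'$ across the top faces of $\tet$. The key point of the argument is the claim that the resulting $\flat$-disk is inefficient, i.e.\ that a kink appears in the new boundary. This follows because $\tet$ is in a short fan at $v'$: by \Cref{lem_traintrackfacts} and \Cref{lem_tipcondition}, the two conduits newly traversed by $\partial\delta$ after the move — lying in the top faces of $\tet$ opposite $v'$ — force the boundary to cross a rung whose sign disagrees with the adjacent conduits still inherited from $\delta^-$. This is exactly the configuration of a kink connecting two rungs on the same side, as in \Cref{fig_kinkshadow}.

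Once the kink is produced, $S^\flat$ no longer has the efficient bigon property, and \Cref{lem_strongbigon} gives a flat isotopy decreasing area, which is what we wanted. The main obstacle is the verification in the last two paragraphs — one has to check both that the cusp at $v'$ is genuinely free of size one (this uses the excellent bigon property globally, not just near $\delta$) and that the "short fan at $v'$" condition translates precisely, via \Cref{lem_traintrackfacts,lem_tipcondition}, into the creation of a kink rather than, say, a new efficient $\flat$-disk. The inward case for $\delta$ is handled identically, with $t$ instead an upward flat triangle and the roles of top and bottom swapped.
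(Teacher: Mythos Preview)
There is a genuine error in your identification of the cusp. You set $v'$ to be the cuspidal vertex of $t$ opposite the $\ell_i$'s, i.e.\ the junction of $r_1$ and $r_2$. But both $r_1$ and $r_2$ are conduits of $\delta^-$ and hence \emph{negative}; the junction between them is a negative junction, not a mixed one, so there is no cusp of $\delta$ at $v'$ at all. The phrase ``the positive conduit of $\delta^+$ immediately across'' does not describe anything at $v'$: the actual cusp of $\delta$ incident to $r_1$ (guaranteed by the previous lemma) lies at the \emph{other} end of $r_1$, not at $v'$. Consequently the tetrahedron move you describe is not available, and the subsequent claim about producing a kink has no footing.

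The paper's argument proceeds quite differently. Rather than working at the tip $t$, it passes along the $\tau$-edge $e$ corresponding to the junction of $r_2$ with $\ell_1$ to the tip $t'$ of $\tet$ at the \emph{other end} of $e$, on a different patch of $\partial\mr M$. The fact that $S^\flat$ curls down along $\partial A$ (where $A$ is the component of $\Neg(S^\flat)$ associated to $\delta$) is what supplies the needed size-one cusp at $t'$: the positive plate above and negative plate below at the mixed rod corresponding to $e$ give adjacent opposite-sign conduits there. The non-hinge fan structure at $t'$ (long at the vertex coming from $e$, hence short at the other, by \Cref{lem_fans}) is then what forces the tetrahedron move across $\tet$ to create a kink. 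Your analysis of which fan of $t$ is short is beside the point; the relevant fan combinatorics live at $t'$, not at $t$.
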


\begin{proof}
We have drawn \Cref{fig_hingelemma} as an aid for reading this proof.

We prove the lemma only in the case when $\delta$ is outward, as the proof in the inward case is symmetric. In this case $\del\delta_-$ is the bottom of $\delta$, so that $t$ is the flat triangle immediately above $r_2$. The fact that $\del\delta_-$ is the bottom of $\delta$ also tells us that if $A$ is the component of $N$ corresponding to $\delta$, then $S^\flat$ must curl down at each mixed rod incident to $A$.

Let $e$ be the $\tau$-edge which meets $\partial \mr M$ at the junction of $r_2$ with $\ell_1$, and let $\tet$ be the tetrahedron corresponding to $t$. Let $t'$ be the tip of $\tet$ on the other side of $e$ from $t$, so that $e$ meets $t'$ at one of its 0-vertices $v$. Note that $t'$ is non-hinge and the fan on the $t'$-side of $v$ is long. If $v'$ is the other 0-vertex of $t'$, the fan on the $t'$-side of $v'$ is short by \Cref{lem_fans}. Note that the edge labeled $e'$ in \Cref{fig_hingelemma} must correspond to a mixed rod of $S^\flat$ since each plate of $\Neg(S^\flat)$ is incident to a mixed rod by \Cref{lem_minimalannuli}.
Since $S^\flat$ curls down along $\partial A$, there is a  tetrahedron move of $S^\flat$ across $\tet$. If $\delta$ is not innermost then this move is possibly of a $\flat$-disk contained in $\delta$. Either way, the move creates a kink in the boundary of some $\flat$-disk.

Now by \Cref{lem_residualbp} and \Cref{lem_strongbigon}, we can perform a flat isotopy to reduce the area of $S^\flat$.
\end{proof}

\begin{lemma}[nonterminal ladderpole lemma]\label{lem_non-hinge}
Let $S^\flat$, $\delta$, $r_1,r_2,\ell_1,\dots, \ell_n$ be as in \Cref{lem_hinge} and suppose $n\ge 2$. If the flat triangle $t$ on the $\delta$-side of $\ell_i$ for $i=1,\dots, n-1$ is hinge, then the area of $S^\flat$ can be reduced by flat isotopy.
\end{lemma}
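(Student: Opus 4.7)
The plan is to follow the template of the proof of \Cref{lem_hinge}, with the hinge flat triangle $t$ above the ladderpole $\ell_i$ playing the role of the non-hinge flat triangle above the rung $r_2$ in that earlier lemma. As before, I would first reduce to the case that $\delta$ is outward, so that $\delta^-$ is the bottom of $\delta$, $t$ is the downward flat triangle immediately above $\ell_i$, and $S^\flat$ curls down at every mixed rod incident to the component $A$ of $N$ corresponding to $\delta$; the inward case is treated symmetrically.

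Next I would set up the local combinatorics near the junction between $\ell_i$ and $\ell_{i+1}$. Let $e$ be the $\tau$-edge corresponding to this junction, let $v$ be the cuspidal vertex of $t$ lying on $e$, and let $\tet$ be the $\tau$-tetrahedron corresponding to $t$. Since $\ell_{i+1}$ is a ladderpole in $\delta^-$ immediately following $\ell_i$, the flat triangle $t_{i+1}$ on the $\delta$-side of $\ell_{i+1}$ is also downward and also has $v$ as a cuspidal vertex, so both $t$ and $t_{i+1}$ lie in the downward fan at $v$. The hinge hypothesis on $t$, via \Cref{lem_fans}, then forces $t$ to be topmost or bottommost in this fan.

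The key step is to extract an available tetrahedron move from this fan structure. A short case analysis on whether $t$ is topmost or bottommost at $v$ (and on the veers of the $\tau$-edges of $\tet$), combined with the downward curling of $S^\flat$ along $\partial A$ and the fact that $t_{i+1}$ occupies the same fan at $v$, should produce such a move across $\tet$. After performing it, either the boundary of some $\flat$-disk acquires a kink---so that \Cref{lem_residualbp} together with \Cref{lem_strongbigon} delivers a flat isotopy that strictly reduces area---or the resulting picture exhibits two plates of $S^\flat$ separated by a single rod with a hinge tetrahedron sandwiched between them, and then \Cref{lem_nohingemoves} directly finishes the proof.

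As in the proof of \Cref{lem_hinge}, the case when $\delta$ is not innermost is handled by passing to an innermost $\flat$-disk nested inside $\delta$ and executing the move there, while keeping track of the possibility that $\flat$-disks of the opposite coorientation contained in $\delta$ may grow during the process. The main obstacle will be verifying that the chosen tetrahedron move does produce one of the two reducible configurations above; this amounts to a local picture analogous to \Cref{fig_hingelemma}, with the added subtlety that $\ell_i$ is a $0$-$0$ edge rather than a $0$-$\pi$ edge, so the geometry of the tips of $\tet$ around $e$ is not quite the same as in the noninitial rung lemma and the case split must be redone carefully.
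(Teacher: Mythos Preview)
Your template---mirror the proof of \Cref{lem_hinge}, reduce to the outward case, locate a tetrahedron move near the junction of $\ell_i$ and $\ell_{i+1}$, and then reduce area---matches the paper's. But two things are off.

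First, the case analysis on whether $t$ is topmost or bottommost at the junction switch is unnecessary. In the outward case $\delta^-$ is the bottom of $\delta$, and since $\ell_{i+1}$ is another ladderpole conduit of $\delta^-$ meeting $\ell_i$ at the switch $u$, the downward flat triangle $t$ is forced to be \emph{bottommost} in the (long) fan at $u$. The paper asserts this directly and uses it to control the tip $t'$ of $\tet$ at the other end of the $\tau$-edge $e$: since $t$ is bottommost at $u$, the upward tip $t'$ is not topmost at $v$, and because $\tet$ is hinge \Cref{lem_fans} makes $t'$ topmost at $v'$.

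Second, and more substantively, your predicted post-move configurations (a kink, or a hinge tetrahedron sandwiched between two plates) are not the mechanism the paper uses. After the tetrahedron move across $\tet$, the paper observes that the \emph{bigon property itself fails} (in the outward case, the unstable bigon property: the effect at $v'$ creates a negative switch not belonging to a bigon). Then \Cref{lem_residualbp} alone finishes the proof---no appeal to \Cref{lem_strongbigon} or \Cref{lem_nohingemoves} is needed. Your route through \Cref{lem_nohingemoves} is also formally delicate, since that corollary presupposes the bigon property, which you would have to verify holds after the move before invoking it.

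A side remark: your final paragraph says $\ell_i$ is a $0$-$0$ edge. It is not; ladderpole branches are $0$-$\pi$ edges of the adjacent flat triangles (cf.\ the proof of the lemma immediately preceding \Cref{lem_hinge}, where a rung and a ladderpole are described as ``both $0$-$\pi$ edges'' of a flat triangle). This doesn't affect the argument once you use the correct mechanism above, but it does indicate the local picture you had in mind was not quite right.
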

\begin{figure}
\centering
\includegraphics[]{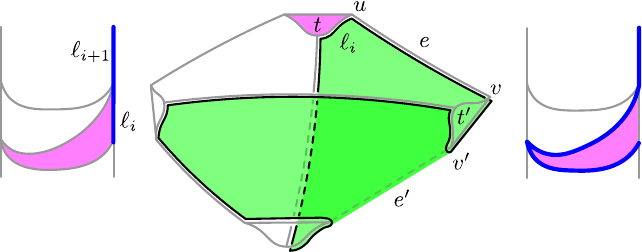}
\caption[]{Pictures in the proof of \Cref{lem_non-hinge}, left to right: if $t$ is hinge, there is an available tetrahedron move across 
$\tet$
, after which $S^\flat$ no longer has the unstable bigon property.}
\label{fig_non-hingelemma}
\end{figure}
\begin{proof}
We have drawn \Cref{fig_non-hingelemma} as an aid for reading this proof.

Again, we will prove the lemma only in the case when $\delta$ is outward, in which case $\del\delta_-$ is the bottom of $\delta$. Let $\tet$ be the tetrahedron corresponding to $\delta$. Suppose for a contradiction that $t$ is hinge. Let $u$ be the $\partial\hbs$-switch corresponding to the junction of $\ell_i$ with $\ell_{i+1}$. Then $t$ is a downward flat triangle which is bottommost in a long fan of $u$. Let $e$ be the $\tau$-edge terminating at $u$, let $v$ be the other endpoint of $e$ on $\partial\mr M$, and let $t'$ be the tip of $\tet$ on the other side of $e$ from $u$, so that $v$ is a 0-vertex of $t'$. Let $v'$ be the other 0-vertex of $t'$. Note that $t'$ is an upward flat triangle and that it is not topmost at $v$, so it is topmost at $v'$. As in \Cref{lem_hinge}, \Cref{lem_minimalannuli} implies that the edge labeled $e'$ in \Cref{fig_non-hingelemma} corresponds to a mixed rod of $S^\flat$. Since $S^\flat$ curls down along $\partial A$, there is a tetrahedron move of $S^\flat$ across $\tet$. As in \Cref{lem_hinge}, this is true regardless of whether $\delta$ is innermost. After this tetrahedron move $S^\flat$ does not have the bigon property. By \Cref{lem_residualbp} we can reduce the area of $S^\flat$ by flat isotopy.
\end{proof}

\begin{lemma}[annulus lemma]\label{lem_annulusmove}
Let $S^\flat$ be a surface with the excellent bigon property. Let $A$ be a stable component of $\Neg(S^\flat)$.  Then either $\core(A)$ collapses to a shallow stable loop or the area of $S^\flat$ can be reduced by flat isotopy. Symmetrically, if $A$ is unstable then either $\core(A)$ collapses to a shallow unstable loop or the area of $S^\flat$ can be reduced by flat isotopy.

In the case of shallow stable or unstable loops, there is an available annulus move of $S^\flat$ after which $\Neg(S^\flat)$ has ladderpole components.
\end{lemma}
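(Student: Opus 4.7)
The plan is to prove shallowness of $\coll(\core(A))$ when $A$ is a stable component (the unstable case being symmetric), then derive the annulus-move consequence. By \Cref{lem_producingloops}, $A^s := A\cap B^s$ carries $\core(A)$, and $\gamma := \coll(\core(A))$ is a stable loop, so I need only upgrade ``stable'' to ``shallow.'' I will argue by contradiction: if $\gamma$ fails to be shallow, then a $\flat$-disk adjacent to $A$ falls into one of the configurations forbidden by \Cref{lem_hinge} or \Cref{lem_non-hinge}, which allows the area of $S^\flat$ to be reduced.

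In detail, since $A$ is stable, \Cref{lem_consistentcurling} forces $S^\flat$ to curl down at each mixed rod on $\partial A$, so each boundary arc of $A$ in $\partial \mr M$ is the negative boundary $\delta^-$ of some outward $\flat$-disk $\delta$. Suppose $\gamma$ is not shallow: at some $\tau$-edge $e$ traversed by $\gamma$, the entry face of $\gamma$ into the fan of $e$ lies strictly below the second-topmost face. Writing the corresponding $\delta^-$ as a concatenation of conduits $r_1, r_2, \ell_1, \dots, \ell_n$ in the notation of \Cref{lem_hinge}, I use \Cref{lem_fans} --- which identifies hinge flat triangles as exactly the topmost and bottommost members of their fans --- to match up the flat triangles on the $\delta$-side of $r_2$ and of each $\ell_i$ with the tetrahedra of the fan of $e$ between $\gamma$'s entry face and the topmost face. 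The non-shallow condition then forces either the flat triangle at $r_2$ to be non-hinge (so \Cref{lem_hinge} applies), or a flat triangle at some nonterminal $\ell_i$ to be hinge (so \Cref{lem_non-hinge} applies). Either way the area of $S^\flat$ strictly drops, contradicting area-minimality. We may assume area-minimality throughout by iterating \Cref{lem_residualbp}, \Cref{lem_strongbigon}, and \Cref{lem_loops}, which preserve (or restore) the excellent bigon property after each flat isotopy. Hence $\gamma$ is shallow.

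For the ``hence'' clause, suppose $\Neg(S^\flat)$ has a non-ladderpole component $A$; then $A$ is stable or unstable, and the shallow loop just produced verifies condition (iii) of the corresponding annulus move. Conditions (i)--(ii) --- that $A$ is a union of rods and plates with each plate topmost (resp.\ bottommost) in its $N_\epsilon$-plate --- are built into the structure of $A$ together with the fact (from the proof of \Cref{lem_producingloops}) that $A^s$ (resp.\ $A^u$) has no switches pointing toward $\partial A$. The annulus move then pushes $\core(A)$ into a component of $U$, leaving two new boundary circles of ladderpole slope, so $\Neg(S^\flat)$ loses its non-ladderpole component and gains at worst ladderpole ones.

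The main obstacle will be the combinatorial matching in the second paragraph: precisely aligning the flat triangles on $\partial\delta$ with the tetrahedra of the fan of $e$, and checking that a non-shallow jump always localizes the hinge failure to either the $r_2$-position or to some nonterminal $\ell_i$-position, so that exactly one of the two forbidden configurations is triggered.
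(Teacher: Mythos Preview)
Your overall approach matches the paper's: invoke \Cref{lem_hinge} and \Cref{lem_non-hinge} to constrain each $\delta^-$, and translate this into shallowness of $\gamma$. Two points, however, need correction.

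First, and most concretely: you claim that condition (ii) of the annulus move---that each plate of $A$ be topmost (resp.\ bottommost) in its $N_\epsilon$-plate---is ``built into the structure of $A$ together with the fact that $A^s$ has no switches pointing toward $\partial A$.'' It is not. Nothing about $A$ being a component of $\Neg(S^\flat)$, nor about the switch directions of $A^s$, prevents other plates of $S^\flat$ from sitting above the plates of $A$ in the same $N_\epsilon$-plates. The paper handles this explicitly: after establishing shallowness, it performs a finite (possibly empty) sequence of upward flips to bring $A$ into topmost position before the stable annulus move is available.

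Second, your ``matching'' step is confused. You propose to match the flat triangles on the $\delta$-side of $r_2$ and of each $\ell_i$ with ``the tetrahedra of the fan of $e$ between $\gamma$'s entry face and the topmost face.'' But those flat triangles live at \emph{different} $\partial\hbs$-switches---one per junction of $\delta^-$---each corresponding to a \emph{different} $\tau$-edge traversed by $\gamma$; they do not all sit in the fan of a single edge $e$. The correct link, which the paper uses, is more direct: the edge $e$ at which you suppose $\gamma$ fails shallowness corresponds to a negative rod $r$ of $A$, and one endpoint of $r$ on $\partial\mr M$ is a \emph{noninitial} junction of some $\delta^-$ (this is the combinatorial observation the paper records with reference to its figure). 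The contrapositives of \Cref{lem_hinge} and \Cref{lem_non-hinge} together force $\delta^-$ to pass from a second-topmost conduit to a topmost conduit at every noninitial junction; translated back through the rod, this says $\gamma$ passes from second-topmost to topmost at $e$. No matching of several flat triangles to a single fan is needed or correct.
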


\begin{proof}
It follows from \Cref{lem_hinge} and \Cref{lem_non-hinge} that if $\delta$ is an outward $\flat$-disk with long negative boundary then either the area of $S^\flat$ is reducible, or at each noninitial junction, $\del\delta_-$ passes from a second-topmost to a topmost conduit. Symmetrically, if $\delta$ is an inward $\flat$-disk with long negative boundary then either the area of $S^\flat$ is reducible, or at each junction of noninitial segments, $\del\delta_-$ passes from a second-bottommost to a bottommost conduit. 

Similarly to the preceding lemmas, the case when $A$ is stable is illustrative; suppose $A$ is stable. Each patch of $A^s$ intersects $\partial \mr M$ in the negative boundary of a $\flat$-disk. Then for each negative rod $r$ in $A$, there is a component of $r\cap \partial\mr M$ which is a noninitial junction (see \Cref{fig_annuluslemma}). It follows that the core of $A$ collapses to a shallow stable loop, and we may perform an annulus move.
\end{proof}

\begin{figure}
\centering
\includegraphics[]{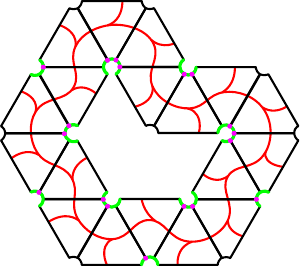}
\caption[]{A stable component of $\Neg(S^\flat)$. We have indicated long negative boundaries by green segments, and noninitial junctions by dots.}
\label{fig_annuluslemma}
\end{figure}

The following lemma is the final piece we need to prove Theorems \ref{thma} and \ref{thmb} from the introduction.

\begin{lemma}[ladderpole component lemma]\label{lem_ladderpole}
Let $S^\flat$ be a surface with the excellent bigon property. Suppose $\Neg(S^\flat)$ has a ladderpole component $A$. Then the area of $S^\flat$ can be reduced by flat isotopy.
\end{lemma}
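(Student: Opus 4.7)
The plan is to first set up a stable annulus move on $A$, perform it, and then extract a strictly area-reducing move from the resulting configuration.

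First, since $A$ is a ladderpole component of $\Neg(S^\flat)$, $\core(A)$ collapses to a curve that is simultaneously a stable loop and an unstable loop (a normal branch loop). Running the analysis of \Cref{lem_nolargebranches} and \Cref{lem_producingloops} on $A^s$ and $A^u$, together with \Cref{lem_hinge} and \Cref{lem_non-hinge} applied to the $\flat$-disks whose negative boundaries lie along $\partial A$, lets me assume (after area-preserving or area-reducing moves) that $\coll(\core(A))$ is shallow with respect to both $B^s$ and $B^u$. A finite sequence of upward flip moves then arranges the plates of $A$ to be topmost in their $N_\epsilon$-plates, so the hypotheses for a stable annulus move on $A$ are satisfied.

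Second, I would perform the stable annulus move on $A$. This flat isotopy pushes $\core(A)$ into the solid torus $U_A$ on whose boundary the ladderpole boundary circle $c_1$ of $\mr S^\flat$ lies, replaces $c_1$ with two new ladderpole boundary components of $\mr S^\flat$ on $\partial U_A$, and leaves the area of $S^\flat$ unchanged. In particular, the plates of $A$ in the bulk are replaced by plates lying on the top $\tau$-faces of the tetrahedra $\tet_i$ that previously capped the plates of $A$, producing a new, highly constrained configuration along the ladder on $\partial U_A$.

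Third, I would analyze this new configuration and extract an area-reducing move. The key claim is that since $c_1$ was a full ladderpole circle and the annulus move inserts two parallel ladderpole circles on $\partial U_A$ while replacing the topmost plates of $A$ with plates on the top faces of the $\tet_i$'s, the resulting conduit layout on $\partial U_A$ must either exhibit backtracking in $\partial \mr S^\flat$ (two conduit segments of opposite sign lying on a single conduit of $\partial N_\epsilon$) or contain a $\flat$-disk of width at most one. A face move resolves the first case and a width-one move (after possibly a tetrahedron move) resolves the second; either way the area of $S^\flat$ strictly decreases.

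\textbf{The main obstacle} I anticipate is the third step: verifying combinatorially that the new ladderpole boundaries, together with the residual positive structure on $\partial U_A$, really do force backtracking or a narrow $\flat$-disk. This demands a careful local case analysis along the ladder on $\partial U_A$, keyed to the veering data around $U_A$ and modeled on the proofs of \Cref{lem_hinge}, \Cref{lem_non-hinge}, and \Cref{lem_annulusmove}.
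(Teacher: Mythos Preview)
Your plan takes a much longer and more fragile route than necessary, and it has genuine gaps.

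The paper's proof is direct and short: along the ladderpole circle $\lambda\subset\partial U_A$ there must be a hinge downward flat triangle $t$ (every ladder has hinge flat triangles). If $\tet$ is the corresponding $\tau$-tetrahedron, then since $S^\flat$ curls down along $\partial A$ there is an available tetrahedron move across $\tet$. After this single move, $\lambda$ acquires a negative rung conduit while still failing to bound a disk on $\partial U_A$; by \Cref{lem_negativerungs} this violates the bigon property, and \Cref{lem_residualbp} then reduces area. No annulus move, no shallowness analysis.

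Your approach has several problems. First, you invoke \Cref{lem_nolargebranches}, \Cref{lem_producingloops}, and \Cref{lem_annulusmove}, all of which are stated and proved only for \emph{non-ladderpole} components of $\Neg(S^\flat)$; you cannot simply import their conclusions here. Second, the shallowness you need is not supplied by \Cref{lem_hinge} and \Cref{lem_non-hinge}: those lemmas concern $\flat$-disks with long negative boundary, whereas the ladderpole boundary circle of $A$ is not the boundary of a $\flat$-disk at all (it corresponds to an annulus patch, not a nonmeridional disk patch). Third, even if you could legitimately set up and perform an annulus move on $A$, you yourself identify the post-move analysis as the ``main obstacle'' and do not carry it out; a proof proposal that defers the crux to an unperformed case analysis is not complete.

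In short: drop the annulus-move strategy entirely. The point of this lemma is that a ladderpole component can be attacked immediately with a single hinge tetrahedron move, which breaks the bigon property and hands you an area reduction via \Cref{lem_residualbp}.
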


\begin{proof}
Suppose that $S^\flat$ curls down at $\partial A$; the argument when $S^\flat$ curls up is symmetric.
\begin{figure}
\centering
\includegraphics[]{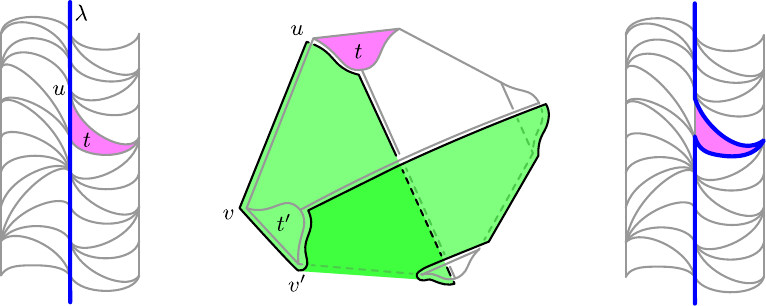}
\caption{Pictures from the proof of \Cref{lem_ladderpole}.}
\label{fig_ladderpolelemma}
\end{figure}
There is a component $U_A$ of $U$ such that $\partial A$ meets $\partial U_A$ in a curve $\lambda$ containing only ladderpole conduit segments. Then there is a hinge downward flat triangle $t$ which is incident to a conduit segment $c$ of $\lambda$. Let $\tet$ be the tetrahedron corresponding to $t$, and let $u$ be the endpoint of $c$ which is higher on $\lambda$. Let $t'$, $v$, and $v'$ be as in \Cref{fig_ladderpolelemma}. Then $t$ is bottommost in its side of $u$, and hence $t'$ is bottommost in its side of $v$ and topmost in its side of $v'$ by \Cref{lem_fans}. Since $S^\flat$ curls down along $\partial A$, there must be an available tetrahedron move across $\tet$. The effect of this move on $\lambda$ shows that $S^\flat$ no longer has the bigon property. By \Cref{lem_residualbp}, $S^\flat$ is flat isotopic to a surface with less area.
\end{proof}

\begin{theorem}\label{thm_mainthmveering}
Let $\tau$ be a veering triangulation of $\mr M$, and let $M$ be obtained from $M$ by Dehn filling along slopes with $\ge 3$ prongs. Let $S$ be a taut surface such that $[S]\in \cone(\sigma_\tau)$. Then $S$ is carried by $\tau^{(2)}$ up to isotopy.
\end{theorem}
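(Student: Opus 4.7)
The plan is to exhibit a flattened representative of $S$ with $\Neg(S^\flat)=\varnothing$, since by definition this is precisely what it means for $S^\flat$ to be carried by $\tau^{(2)}$ as a partial branched surface. Starting from $S$ taut with $[S]\in\cone(\sigma_\tau)$, I would first apply \Cref{lem_indexsumarg} to isotope $S$ to a flattened surface with the bigon property. Then by alternating \Cref{lem_strongbigon}, which either promotes to the efficient bigon property or strictly reduces area, with \Cref{lem_loops}, which promotes to the excellent bigon property without increasing area, I can produce some flattened representative of $S$ with the excellent bigon property. Since area takes values in $\Z_{\ge 0}$, let $S^\flat$ be such a representative of minimal possible area.

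The crux of the proof is then to show $\Neg(S^\flat)=\varnothing$. Suppose for contradiction that it is nonempty. By the structural results leading up to \Cref{lem_loops}, every component of $\Neg(S^\flat)$ is stable, unstable, or ladderpole. If some component is non-ladderpole, \Cref{lem_annulusmove} provides an annulus move that preserves area and introduces a ladderpole component into $\Neg$. Following up with \Cref{lem_strongbigon} and \Cref{lem_loops} to restore the excellent bigon property without increasing area, I obtain a flattened surface of area at most that of $S^\flat$ whose negative region contains a ladderpole component. Now \Cref{lem_ladderpole} provides a flat isotopy strictly reducing area, and one final restoration of the excellent bigon property via \Cref{lem_strongbigon} and \Cref{lem_loops} produces a flattened representative of $S$ with the excellent bigon property and area strictly less than that of $S^\flat$, contradicting minimality. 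Hence $\Neg(S^\flat)=\varnothing$, as required.

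The main obstacle is the delicate bookkeeping around the restoration steps: both the annulus move of \Cref{lem_annulusmove} and the flat isotopy provided by \Cref{lem_ladderpole} may destroy properties further up the hierarchy, while \Cref{lem_strongbigon} requires the bigon property as input. I would need to verify that restoring the excellent bigon property after each of these operations can truly be done without increasing area, so that the reverse implication in the minimality argument is sound. Since each elementary step in the restoration either strictly reduces area or preserves it while advancing the property hierarchy, a lexicographic well-founded descent on the pair (area, distance-to-excellent-bigon-property) should ensure termination, but each intermediate invocation of a lemma must be checked against its stated hypotheses, and this is where the bulk of the remaining care must be spent.
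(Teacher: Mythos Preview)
Your approach is essentially the same as the paper's, and it is correct. The paper's proof is slightly more streamlined in one respect that directly addresses the ``main obstacle'' you flag at the end: rather than minimizing area only among flattenings already possessing the excellent bigon property, the paper takes $S^\flat$ to minimize area over \emph{all} flattenings in its flat isotopy class. This global minimality means that after each area-preserving flat isotopy (promotion to excellent via \Cref{lem_loops}, and the annulus move of \Cref{lem_annulusmove}) the resulting surface is still area-minimizing, so one never needs a separate restoration pass. The contradiction from \Cref{lem_ladderpole} is then immediate against this global minimum, and the delicate bookkeeping you anticipate about re-verifying hypotheses after each operation largely evaporates. Your lexicographic descent would also work, but the paper's choice of minimizing over the full flat isotopy class is the cleaner packaging.
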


\begin{proof}
Let $S^\flat$ be a flattening of $S$ that minimizes area in its flat isotopy class. By \Cref{lem_strongbigon}, $S^\flat$ has the efficient bigon property. 
We wish to show that $\Neg(S^\flat)$ is empty. By \Cref{lem_loops}, $S^\flat$ has the excellent bigon property. By \Cref{lem_annulusmove}, if $\Neg(S^\flat)$ is nonempty then up to a flat isotopy which does not increase area we can assume $\Neg(S^\flat)$ has ladderpole components. However, if $\Neg(S^\flat)$ had a ladderpole component then $S^\flat$ would not minimize area by \Cref{lem_ladderpole}. We conclude that $\Neg(S^\flat)=\varnothing$, so $S^\flat$ is carried by $\tau^{(2)}$. 
 \end{proof}

The above proof is nonconstructive as written, but within it is a recipe for producing the carried surface. We provide this recipe explicitly now:

\begin{enumerate}[label={{(\arabic*)}}, start=0]
\item Given a taut surface $S$ with $[S]\in \cone(\sigma_\tau)$, flatten it and call the resulting surface $S^\flat$. Proceed to step 1.

\item Is $\Neg(S^\flat)$ empty?
\begin{enumerate}
\item If so, then $S^\flat$ is carried by $\hbs$ and we are done.
\item If not, proceed to step 2.
\end{enumerate}

\item Does $\partial \mr S^\flat$ have backtracking?
\begin{enumerate}
\item If so, reduce the area of $S^\flat$ by a face move and return to step 1.
\item If not, proceed to step 3.
\end{enumerate}

\item Does $S^\flat$ have a $\flat$-disk of width $\le 1$?
\begin{enumerate}
\item If so, reduce the area of $S^\flat$ by a width one move and return to step 1.
\item If not, then $S^\flat$ has the bigon property by the proof of \Cref{lem_indexsumarg}. Proceed to step 4.
\end{enumerate}

\item Does $S^\flat$ have the efficient bigon property?
\begin{enumerate}
\item If not, reduce the area of $S^\flat$ using methods from the proof of \Cref{lem_strongbigon} and return to step 1.
\item If so, proceed to step 5.
\end{enumerate}

\item Does $S^\flat$ have the excellent bigon property?
\begin{enumerate}
\item If not, reduce the area of $S^\flat$ using methods from the proof of \Cref{lem_nolargebranches} and return to step 1.
\item If so, proceed to step 6.
\end{enumerate}

\item Does $\Neg(S^\flat)$ have a ladderpole component?
\begin{enumerate}
\item If so, reduce the area of $S^\flat$ using methods from the proof of \Cref{lem_ladderpole} and return to step 1.
\item If not, then the core of every component of $\Neg(S^\flat)$ collapses to a shallow stable or unstable loop. Perform an annulus move, after which $\Neg(S^\flat)$ has a ladderpole component. Reduce the area of $S^\flat$ by \Cref{lem_ladderpole} and return to step 1.
\end{enumerate}
\end{enumerate}

\Cref{thm_mainthmveering} immediately gives the following:

\begin{corollary}\label{cor_reversecontain}
$\cone(\sigma_\tau)\subset\C_\tau^\vee$, so $\cone(\sigma_\tau)=\C_\tau^\vee$. 
\end{corollary}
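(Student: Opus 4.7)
The plan is to combine \Cref{thm_mainthmveering} with \Cref{prop_carriedcriterion} to handle integral classes, and then upgrade to all of $\cone(\sigma_\tau)$ by a density/closedness argument.

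First I would take an integral class $\alpha\in\cone(\sigma_\tau)$. By definition of $\sigma_\tau$ as a face of $B_x(M)$ and the fact that the Thurston norm is realized on integral classes, there exists a taut surface $S\subset M$ with $[S]=\alpha$. Since $\alpha\in\cone(\sigma_\tau)$, we may apply \Cref{thm_mainthmveering} to isotope $S$ so that it is carried by the partial branched surface $\tau^{(2)}$. Then \Cref{prop_carriedcriterion} (the forward direction is immediate from the definition of $\C_\tau$, since a curve positively transverse to $\tau^{(2)}$ pairs nonnegatively with any carried surface) yields $\alpha\in\C_\tau^\vee$. This establishes the containment on integral classes.

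Next I would extend from integral classes to all of $\cone(\sigma_\tau)$. The cone $\cone(\sigma_\tau)$ is a rational polyhedral cone (since $B_x(M)$ is a finite-sided rational polyhedron by Thurston), so its integral points are dense in it after scaling. The cone $\C_\tau^\vee$ is by definition closed (it is the intersection of closed half-spaces indexed by elements of $\C_\tau$) and is a real cone, hence closed under nonnegative rational scaling. Thus the containment $(\cone(\sigma_\tau)\cap H_2(M;\Z))\subset\C_\tau^\vee$ upgrades to $\cone(\sigma_\tau)\subset\C_\tau^\vee$ by taking limits of rational multiples of integral classes. Combined with \Cref{lem_easycontain}, this yields the desired equality $\cone(\sigma_\tau)=\C_\tau^\vee$.

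There is essentially no obstacle here, since all the heavy lifting was done in \Cref{thm_mainthmveering} and \Cref{prop_carriedcriterion}; the only points that require any care are (i) noting that every integral class in $\cone(\sigma_\tau)$ is represented by an honest taut surface (as opposed to merely realizing the Thurston norm), which follows because $\sigma_\tau$ is a face on which $x$ is linear and no class in its cone can be represented by a surface with a nulhomologous component without violating $-\chi=\langle-e_\tau,\cdot\rangle$, and (ii) verifying the density/closedness step above. Both are routine.
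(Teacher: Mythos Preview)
Your proposal is correct and follows essentially the same approach as the paper: use \Cref{thm_mainthmveering} to show that integral classes in $\cone(\sigma_\tau)$ are represented by carried surfaces, then observe that carried surfaces pair nonnegatively with positive transversals (which is exactly the easy direction of \Cref{prop_carriedcriterion}). The paper leaves the passage from integral classes to the full cone implicit, as both cones are rational polyhedral, but your explicit density/closedness argument is a harmless elaboration.
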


\begin{proof}
If $\alpha$ is an integral class in $\cone(\sigma_\tau)$ then \Cref{thm_mainthmveering} produces a surface representing $\alpha$ carried by $\hbs$, which clearly pairs nonnegatively with each closed positive transversal to $\hbs$ lying in $\mr M$.
\end{proof}

As a summary of the paper to this point we provide a proof of \Cref{thma} and \Cref{thmb}.

\begin{maintheorem}\label{thm_tautsurfaces}
Let $\tau$ be a veering triangulation of a compact 3-manifold $\mr M$. If $M$ is obtained by Dehn filling each component of $\partial \mr M$ along slopes with $\ge 3$ prongs then $M$ is irreducible and atoroidal. Let $\sigma_\tau$ be the face of the Thurston norm ball $B_x(M)$ determined by the Euler class $e_\tau$. Then the following hold:
\begin{enumerate}[label=(\roman*)]
\item $\cone(\sigma_\tau)=\C_\tau^\vee$, and the codimension of $\sigma_\tau$ in $\partial B_x(M)$ is equal to the dimension of the largest linear subspace contained in $\C_\tau$.
\item If $S\subset M$ is a surface, then $S$ is carried by $\hbs$ up to isotopy if and only if $S$ is taut and $[S]\in \cone(\sigma_\tau)$.
\end{enumerate}
\end{maintheorem}

\begin{proof}
We get irreducibility of $M$ from \Cref{lem_irreducible} and atoroidality by \Cref{lem_atoroidal}.

 The containment $\C_\tau^\vee\subset \cone(\sigma_\tau)$ is given by \Cref{lem_easycontain}. The reverse containment is given by \Cref{cor_reversecontain}. Now the claim about the dimension of $\sigma_\tau$ in statement (i) is basic linear algebra and follows from the fact that $\C_\tau$ and $\C_\tau^\vee$ are dual convex polyhedral cones, see e.g. \cite[\S 1.2]{Ful93}. This completes the proof of (i).
 
If $S$ is carried by $\hbs$ up to isotopy then $S$ is taut and $[S]\in \cone(\sigma_\tau)$ by \Cref{lem_ifcarriedthentaut}. The other direction of statement (ii) is \Cref{thm_mainthmveering}. 
\end{proof}

\section{The case with boundary}\label{sec_export}

Now we explain how to modify our methods to obtain a result in the case when the manifold we care about is the one triangulated by $\tau$, rather than a Dehn filling. The result in this section is used in \cite{LMT20} to show that a veering triangulation determines a face of the Thurston norm ball of $H_2(\mr M,\partial \mr M)$.

We begin by expanding our definition of partial branched surface.
Let $M$ be a compact 3-manifold with boundary. A \textbf{partial branched surface} in $M$ is a branched surface $B$ in $M-\intr(U)$, where $U$ is a union of closed solid tori and closed regular neighborhoods of components of $\partial M$. A properly embedded surface $S\subset M$ is \textbf{carried} by a partial branched surface $B\subset M$ if $S$ has no components completely contained in $U$, $S-\intr(U)$ is carried by $B$, and each component $S\cap U$ $\pi_1$-injects into its component of $U$.

Let $\tau$ be a veering triangulation of a compact manifold $\mr M$. Let $M$ be obtained by gluing a thickened torus $T^2\times [0,1]$ to each boundary component of $\mr M$. Though $M$ and $\mr M$ are homeomorphic, it is important to distinguish between them. Note $\hbs$ is a partial branched surface in $M$ with $U=M-\mr M$. A properly embedded surface $S\subset M$ is carried by $\hbs$ as a partial branched surface if $S\cap \mr M$ is carried by $\hbs$ and a branched surface and each component of $S\cap U$ is an annulus which $\pi_1$-injects into its component of $U$.

We define $N_\epsilon$ just as in \Cref{sec_rodplate}. A \textbf{flattened surface} $S^\flat$ in $M$ is a properly embedded incompressible and $\del$-incompressible surface in $M$ such that $S^\flat \cap \mr M$ lies in $N_\epsilon$ transverse to the vertical foliation and each component of $S^\flat\cap U$ is $\pi_1$-injective in its component of $U$. Terms like plate, rod, and area translate immediately to this setting. A \textbf{$\boldsymbol{\flat}$-disk} of $S^\flat$ is a disk in $\partial \mr M$ bounded by a component of $S^\flat\cap \partial \mr M$. Note that if $\delta$ is a $\flat$-disk, then $\del\delta$ must also bound a disk in $S^\flat\cap U$. Words like inward, outward, volume, and cusp that we previously used to describe $\flat$-disks in the case of a closed 3-manifold also translate immediately.

\begin{theorem}\label{thm_export}
Let $S$ be an incompressible and boundary incompressible surface in $M$, where $M$ is obtained by gluing a thickened torus to each component of $\mr M$. Further suppose that $S$ has the property that for any surface $S'$ isotopic to $S$ that is transverse to $B^u$ and $B^s$, either: 
\begin{itemize}
\item one of $S' \cap B^u$ or $S' \cap B^s$ has a patch of positive index, or
\item every negative switch of $S' \cap B^u$ and every negative switch of $S' \cap B^s$ belongs to a bigon.
\end{itemize}
Then $S$ is isotopic to a surface carried by the partial branched surface $\tau^{(2)} \cap \mr M$.
\end{theorem}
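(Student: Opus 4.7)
The plan is to mirror the architecture of the proof of \Cref{thm_mainthmveering}, with the single substantive change being that the hypothesis on $S$ directly supplies the bigon property, replacing the tautness-and-homology argument from \Cref{lem_indexsumarg}. First I would verify that the flattening procedure of \Cref{sec_flatteningprocess} carries over to this bounded setting. The complementary regions of $B^s$ in $M$ are still open solid tori, each containing a unique component of $U$; the only new wrinkle is that $U$ is a union of thickened tori rather than solid tori. Because $S$ is incompressible and $\partial$-incompressible, each component of $S\cap U$ is $\pi_1$-injective in its component of $U$, so combing $S$ onto $\hbs$ and into $U$ produces a genuine flattened surface $S^\flat$ in the sense of the expanded definition.

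Next I would extract the bigon property. Apply face moves whenever $\partial \mr S^\flat$ has backtracking, and width-one moves whenever $S^\flat$ admits a $\flat$-disk of width $\le 1$. After finitely many such area-reducing moves we arrive at a flattened surface where, by \Cref{lem_tipcondition} and the index calculation in the proof of \Cref{lem_nonullgons}, no patch of $S^\flat \cap B^s$ or $S^\flat \cap B^u$ has positive index. The hypothesis on $S$ then forces every negative switch of both train tracks to belong to a bigon, which is exactly the bigon property for $S^\flat$. From this point onward the argument is identical to that of \Cref{thm_mainthmveering}: minimize area in the flat isotopy class, invoke \Cref{lem_strongbigon} and \Cref{lem_loops} to upgrade to the excellent bigon property, and apply \Cref{lem_annulusmove} together with \Cref{lem_ladderpole} to conclude that $\Neg(S^\flat)=\varnothing$. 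Hence $S^\flat$ is carried by $\tau^{(2)}\cap \mr M$ as a partial branched surface in the new sense.

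The chief technical point to verify is that every move in our repertoire preserves the hypothesis of the theorem. Since the face, width-one, tetrahedron, flip, disk-removal, and annulus moves are each realized by an ambient isotopy of $M$, the isotopy class of $S$ is preserved throughout the procedure, so the hypothesis continues to apply after every step. A secondary concern is that the lemmas of \Cref{sec_negregion} were originally stated with $M$ closed and $U$ a union of solid tori; however, their proofs use only the local combinatorics of $\tau$ and $N_\epsilon$ together with the bigon property, so they transfer verbatim once flattening and the bigon property are in hand. The main obstacle, then, is mostly bookkeeping: checking that incompressibility and $\partial$-incompressibility of $S$ are genuinely sufficient to justify the simple patches step and to ensure that the new components of $S^\flat\cap U$ created by face and annulus moves remain $\pi_1$-injective in their respective thickened tori, which follows from standard innermost disk and outermost arc arguments in each $T^2\times[0,1]$.
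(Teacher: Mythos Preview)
Your proposal is correct and follows essentially the same approach as the paper: flatten, eliminate width $\le 1$ $\flat$-disks to rule out positive-index patches, invoke the hypothesis to obtain the bigon property, and then run the area-reduction machinery of \Cref{sec_bigprops} and \Cref{sec_negregion} verbatim. The paper's proof is more terse, but your additional remarks on irreducibility, $\pi_1$-injectivity of pieces in $U$, and the preservation of the hypothesis under flat isotopies are all on point.
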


\begin{proof}
We first flatten $S$ to a flattened surface $S^\flat$. This works exactly as in \Cref{sec_flatteningprocess}. Now our definitions of flat isotopies translate verbatim to this setting. Recall that the face move in the closed case used irreducibility of the closed manifold. 
As noted in the proof of \Cref{lem_atoroidal} $M$ is both irreducible and atoroidal, so there is no problem.
Hence we can apply face moves and width 1 moves until $S^\flat$ has no $\flat$-disks of width 1, and thus neither $S^\flat\cap B^s$ nor $S^\flat \cap B^u$ has a patch of positive index. By the condition in the theorem statement, every negative switch of $S^\flat \cap B^s$ belongs to a bigon and similarly for $B^u$. We can now apply our argument from the closed case to reduce the area of $S^\flat$ until it has no negative plates, beginning with \Cref{lem_negativerungs} and proceeding directly through the proof of \Cref{thm_mainthmveering}.
\end{proof}

\bibliographystyle{alpha}
\bibliography{bibliography}
\label{sec:biblio}

\end{document}